\numberwithin{equation}{section}\theoremstyle{plain}
\newtheorem{teo}{Theorem}[subsection]
\newtheorem{prop}[teo]{Proposition}
\newtheorem{lem}[teo]{Lemma}
\newtheorem{lema}[teo]{Lemma}
\newtheorem{cor}[teo]{Corollary}
\theoremstyle{definition}
\newtheorem{rem}[teo]{Remark}
\newtheorem{ex}[teo]{Example}
\newtheorem{exas}[teo]{Examples}
\newcommand\I{\mathbb I}
\newcommand{\lda}{\lambda}
\newcommand{\ap}{\cdot}
\def \N {\mathbb{N}}
\def \Z {\mathbb{Z}}
\def \k {\Bbbk}
\def \o {\otimes}
\def \pf {\begin{proof}}
\def \epf {\end{proof}}
\def\lg{\langle}
\def\rg{\rangle}
\begin{document}


 \title[Classifying partial (co)actions]{Classifying partial (co)actions of Taft and Nichols Hopf algebras on their base fields}
\author[Fonseca, Martini and Silva]{Graziela Fonseca, Grasiela Martini and Leonardo Silva}

\address[Fonseca]{Instituto Federal Sul-rio-grandense, Brazil}
\email{grazielalangone@gmail.com}

\address[Martini]{Universidade Federal do Rio Grande, Brazil}
\email{grasiela.martini@furg.br}

\address[Silva]{IME, Universidade Federal do Rio Grande do Sul, Rio Grande do Sul, Brazil.}
\email{leonardoufpel@gmail.com}

\thanks{\noindent \textbf{2020 MSC:} Primary 16W22; Secondary 16T99. \\ 
	\hspace*{0.3cm} 
	\textbf{Key words and phrases:} Partial action; Partial coaction; One-dimensional partial representation; One-dimensional partial corepresentation; Taft algebra; Nichols Hopf algebra. \\
	\hspace*{0.3cm} \emph{The third author was partially supported by CNPq, Brazil.}}

\begin{abstract}
In this paper we determine all partial actions and partial coactions of Taft and Nichols Hopf algebras on their base fields.
Furthermore, we prove that all such partial (co)actions are symmetric.
\end{abstract}

\maketitle
\section{Introduction}
The notion of a partial group action first appeared within operator algebras theory.
In fact, in \cite{exel1994circle}, Exel describe the structure of suitable $C^*$-algebras on which the unitary circle $\mathbb{S}_1$ acts partially by automorphisms.
In \cite{Dokuchaev_exel} the notion of a group acting partially on an algebra was formulated in a purely algebraic context, motivating investigations in multiple directions.
In particular, a Galois theory for commutative ring extensions in a setting of partial group actions was developed in \cite{paques_ferrero_dokuchaev}.
This work inspired Caenepeel and Janssen to extend Galois theory to the context of Hopf algebras in \cite{caenepeel2008partial}, where the notions of partial actions and partial coactions of Hopf algebras on algebras were introduced.
After, partial (co)actions became an area of research in itself, with extensions and applications to other settings, such as groupoids \cite{paques_bagio}, categories \cite{Alvares_Alves_Batista} and among many others, as can be noticed in \cite{Dokuchaev_survey} and the references therein.

\smallbreak

Some of the current research interests are partial (co)representations of Hopf algebras \cite{corepresentations, Partial_representations, dilations}.
A difficulty in dealing with partial (co)actions occurs because a partial (co)action, unlike a global one, is not a morphism of (co)algebras since the linear map that defines a partial (co)action is not (co)multiplicative.
The concepts of partial (co)representations provide a way to obtain some control in the (co)multiplicative behavior of such a map.
This approach gives a categorical point of view for partial (co)actions. 
In particular, symmetric partial (co)actions of a Hopf algebra $H$ on its base field provide one-dimensional (co)representations of $H$.

Partial (co)actions of a Hopf algebra on its base field have been characterized and calculated by many authors, even for more general settings, see for instance \cite{Alvares_Alves_Batista, glob_twisted_patial_hopf, guris}.
However, few Hopf algebras have the partial (co)actions on their base field classified, namely group algebras, dual of finite group algebras and Sweedler's four-dimensional Hopf algebra.

\smallbreak

There are two important classes of finite dimensional Hopf algebras: the semisimple algebras and the pointed ones. With suitable conditions on the base field, the group algebras and the dual of finite group algebras are semisimple.
On the other hand, Sweedler's four-dimensional Hopf algebra is the smaller (non-semisimple) pointed Hopf algebra.

In \cite{Taft} Taft introduced a two parameters family of finite-dimensional Hopf algebras whose antipode map has finite order but as large as desired.
This family generalize the Sweedler's Hopf algebra and it happens to be pointed Hopf algebras.
Precisely, for positive integers $n$ and $\ell$, with $n \geq 2$, the Hopf algebra $H(\ell,n)$ constructed has dimension $n^{\ell+1}$ and its antipode map has order $2n$.
Moreover, the Hopf algebra $H(\ell,n)$  is not semisimple, and neither it nor its dual algebra are unimodular.
The Hopf algebra $H(1,n)$ is the so-called \emph{Taft algebra of order $n$} and has been extensively studied.
In particular, $H(1,2)$ is exactly the Sweedler's Hopf algebra.

Another family of pointed Hopf algebras is the \emph{Nichols Hopf algebras}\footnote{\emph{Nichols Hopf algebra} should not be confused with \emph{Nichols algebra}.}.
These Hopf algebras also are a particular case of the Hopf algebras constructed in the aforementioned article \cite{Taft}, namely the family $H(\ell, 2)$.
Such Hopf algebras were named in honor of Nichols 
after his work \cite{nichols}.
Nichols Hopf algebras have codimension 2.
Furthermore, the unique (up to isomorphism) pointed Hopf algebra of dimension $2^{\ell+1}$ with coradical $\k C_2$ is precisely $H(\ell, 2)$ \cite{Nichols_Caenepeel}.

\smallbreak 

The classification of global Hopf actions on algebras is a current research subject and widely studied, mainly when the Hopf algebra is semisimple or pointed.
In these cases, global Hopf actions on fields play an important role and they were particularly treated by Etingof and Walton in \cite{etingofewaltonSemisimple,etingofewaltonI,etingofewaltonII}, where they obtained a complete classification of them for semisimple Hopf algebras and several families of pointed Hopf algebras, for instance, the Taft and Nichols Hopf algebras.
Moreover, there exist currently a list of papers  that classify global actions of certain Hopf algebras on different types of algebras, such as Weyl algebras, algebras of differential operators of smooth affine varieties, path algebras of quivers and deformation quantizations of commutative domains \cite{CuadraEtingofWaltonI,CuadraEtingofWaltonII, etingofewaltonFinite, etingofewaltonFinite2,Acoes_taft_Chelsea}.

Furthermore, the global actions of Taft algebras on any finite dimensional algebra were recently classified \cite{Acoes_taft_Centrone}. 
However, calculate a partial action can be a more difficult task:
since a Taft algebra has two generator elements as algebra, to calculate a global action of a Taft algebra on any algebra is sufficient to deal only with these two generators;
on the other hand, to calculate a partial action of a Taft algebra one needs to deal with a whole basis, since the linear map that defines a partial action is not multiplicative, as already mentioned.

The aim of this paper is to classify partial actions and partial coactions on the base field for the two families of Hopf algebras mentioned earlier.
	In this way, we intend to provide a large class of examples in order to obtain a better understanding of the behavior of these partial (co)actions, which agree with the well-known results for two distinct types of Hopf algebras: the group algebras and the universal enveloping algebras of Lie algebras.
	These results extend those obtained for the Sweedler's Hopf algebra, overcoming some difficulties about computations in large dimensions.
	Moreover, this classification still provides an important class of examples of one-dimensional partial (co)representations, a question not yet investigated for any Hopf algebra, perhaps due to the lack of interesting examples.

\medbreak

The present work is organized as follows.
In Section \ref{sec:preliminaries} we recall the definitions and properties of partial (co)actions that are used throughout the paper.
Some results for the particular case of partial (co)actions of a bialgebra on its base field are presented.
In Section \ref{sec:Taft}, all partial (co)actions of a Taft algebra on its base field are classified.
Aiming this goal, some results concerning \emph{q-combinatorics} are developed.
Further, we prove that all such partial (co)actions are symmetric.
Finally, in Section \ref{sec:Nichols}, we classify all partial (co)actions of a Nichols Hopf algebra on its base field and we prove that all of them are symmetric.

\subsection*{Conventions}
All vector spaces and (co)algebras are considered over a field $\k$, and a (co)algebra means a (co)unital (co)algebra.
Besides, $\k^{\times} = \k \backslash \{0\}$ and unadorned $\otimes$ means $\otimes_{\Bbbk}$.

For a coalgebra $C$, $\varepsilon_C$ and $\Delta_C$ stands for the counit and the comultiplication maps of $C$, and will be written as  $\varepsilon$ and $\Delta$ if there is no ambiguity.
Moreover, we use Sweedler-Heynemann notation with summation sign suppressed for the comultiplication map, namely $\Delta(c)=c_1 \o c_2 \in C \o C$, for all $c \in C$.
We write $G(C)= \{g \in C \backslash \{0\} \ : \ \Delta(g) = g \o g \}$ for the set of the \emph{group-like elements} of the coalgebra $C$.
Given $g,h \in G(C)$, an element $x \in C$ is called a \emph{$(g,h)$-primitive element} if $\Delta(x)= x \o g + h \o x$.
If no emphasis on the elements $g, h \in G(C)$ is needed, a $(g,h)$-primitive element will be called simply of a \emph{skew-primitive element}.

Given a group $G$, we write $\k G$ for the group algebra with the canonical basis $\{g \ : \ g \in G \}$.
For $g \in G$, $\lg g \rg $ denotes the subgroup of $G$ generated by $g$. 
If $\mathcal{B} = \{v_1, v_2, \cdots , v_n\}$ is a basis for a finite-dimensional vector space $V$, then $ \{v_1^*, v_2^*, \cdots , v_n^*\}$ is the dual basis of $\mathcal{B}$.

We use $\Z$, $\N$ and $\N_0$ for the sets of integers, the positive integers and $\N \cup \{0\}$, respectively.
Let $ j, k \in \Z$ and $n \in \N$.
The symbols $\delta_{j,k}$ and $C_n$ stand for Kronecker's delta and the cyclic group of order $n$, respectively.
If $j \leq k$, then $\I_{j, k} =\{j, j+1, \cdots, k\}$ and $\I_n = \I_{1, n}$.

Next, we present some definitions and facts about \emph{q-combinatorics} that will be useful in Section \ref{sec:Taft}, where we deal with Taft algebras.

Let $n \in \N$ and $q \in \k^{\times}$.
The \emph{q-numbers} are defined recursively as $(0)_q = 0$ and  $(n)_q = 1+ q + \cdots + q^{n-1} = \sum_{\ell=0}^{n-1}q^\ell.$
Also, the  \emph{q-factorial} are defined recursively as 
$(0)_q ! = 1$ and $(n)_q ! = (n)_q (n-1)_q !.$

Thus, the \emph{q-binomial coefficients} are defined as follows.
First set ${0 \choose 0}_q = 1$.
Now, if $(n)_q ! = 0$, then ${n \choose 0}_q = 1 = {n \choose n}_q$ and ${n \choose k}_q = 0$, for all $k \in \I_{n-1}$.
Otherwise, if  $(n)_q ! \neq 0$,  then
${n \choose k}_q = \frac{(n)_q !}{(k)_q ! (n-k)_q !}$, for all $k \in \I_{0, n}$.

For any $m \in \N_0$, clearly ${m \choose \ell}_q = {m \choose {m-\ell}}_q$ for all $\ell \in \I_{0,m}$.
Moreover, if $\ell \in \Z$, then it is convenient to set ${m \choose \ell}_q = 0$  if $\ell>m$ or $\ell<0$.
With this convention, ${m \choose \ell}_q = {m \choose {m-\ell}}_q$ holds for all $\ell \in \Z$.

When $q=1$, all the notions above are the classical ones.
If $q \neq 1$ is an $n^{th}$ root of unity, then $(n)_q = 0$.
Consequently $(n)_q != 0$ and therefore ${n \choose k}_q = 0$, for any $k \in \I_{n-1}$.
If $q \neq 1$ is a primitive $n^{th}$ root of unity, then $(n-1)_q ! \neq 0$.

Furthermore, we have the following $q$-analogues for \emph{Pascal identity}:
for any $q \in \k^{\times}$, $i \in \N$ and $s \in \Z$, the equalities
	\begin{align} \label{6.5}
	{i \choose s}_q  =  {{i-1} \choose {s-1}}_q + q^{s}  {{i-1} \choose s}_q 
	\end{align}
and
	\begin{align} \label{6.7}
	{i \choose s}_q  =  {{i-1} \choose s}_q  + q^{i-s}  {{i-1} \choose {s-1}}_q
	\end{align}
hold.

\section{Preliminaries}\label{sec:preliminaries}

\subsection{Partial (Co)Actions}\label{Subsec:partial_actions}
Partial actions and partial coactions of bialgebras 
on algebras were introduced in \cite{caenepeel2008partial}.
However, we will use the definitions as they appear in \cite{Partial_representations, dilations}, since these later works care about symmetric conditions on partial (co)actions.
These conditions are important for some developments in the theory of partial (co)actions, such as partial (co)representations.

\medbreak

	A \emph{left partial action of a bialgebra $H$ on an algebra $A$} is a linear map $\cdot: H \otimes A \longrightarrow A$, denoted by $\ap (h \o a) = h \ap a$, such that
	\begin{enumerate}
		\item [(i)] $1_H \cdot a=a$;
		\item [(ii)] $h\cdot ab=(h_1\cdot a)(h_2\cdot b)$;
		\item [(iii)] $h\cdot(k\cdot a)=(h_1\cdot 1_A)(h_2k\cdot a)$,
	\end{enumerate}
hold for all $h,k\in H$ and $a,b\in A$.
In this case, $A$ is called a \textit{left partial $H$-module algebra}.
A left partial action is \emph{symmetric} if in addition we have
	\begin{enumerate}
		\item [(iv)] $h \cdot ( k \cdot a)=(h_1k \cdot a)(h_2 \cdot 1_A)$,
	\end{enumerate}
for all $h,k\in H$ and $a \in A$.
In this case, $A$ is called a \textit{symmetric left partial $H$-module algebra}.

\smallbreak

The definition of a \emph{right partial action} is given analogously.
Since throughout this work we deal only with left partial actions, from now on by a \emph{partial action} we mean a left partial action.

\smallbreak

It is clear that every $H$-module algebra is a symmetric partial $H$-module algebra. Moreover, a partial $H$-module algebra is an $H$-module algebra if and only if $h \cdot 1_A = \varepsilon(h) 1_A,$ for all $h \in H$.

\begin{ex}\cite[Proposition 4.10]{caenepeel2008partial}\label{lda_Ug} Let $\mathfrak{g}$ be a Lie algebra and $U(\mathfrak{g})$ its universal enveloping algebra.
	Then, every partial $U(\mathfrak{g})$-module algebra is a $U(\mathfrak{g})$-module algebra.
\end{ex}

Inspired by an example in \cite{partial_invariants}, we construct \emph{genuine partial actions}, \emph{i.e.}, partial actions that are not global.
\begin{ex}\label{exemplo_inicial} Let $G$ be a group and $N$ a subgroup of $G$.
	Consider the map $\delta_N : G \longrightarrow \k$ given by $\delta_N (g) = 1$ if $g \in N$, and $\delta_N(g) = 0$ otherwise.
	Then, for any algebra $A$, the linear map $\ap: \k G \o A \longrightarrow A$ given by $g \ap a = \delta_N(g)a$, for all $g \in G, a \in A$, defines a symmetric partial action of $\k G$ on $A$.
	If $N \neq G$, there exists an element $g \in G$ such that $g \notin N$, and for such an element $g \ap 1_A = 0 \neq 1_A = \varepsilon(g)1_A$, \emph{i.e.}, the map  $\cdot$ is a genuine partial action.
\end{ex}

The previous example can be generalized to produce a partial action of a bialgebra $H$ on any algebra $A$ through scalars.
Indeed, if the linear map $\cdot : H \o \k \longrightarrow \k$ is a partial action of a bialgebra $H$ on its base field $\k$, then the linear map $\rightharpoonup :H \o A \longrightarrow A$, given by $h \rightharpoonup a = (h \ap 1_\k)a,$ for all $h \in H, a \in A$, is a partial action of $H$ on $A$.

In this way, partial actions through scalars provide a large amount of examples of partial actions.
We dedicate Subsection \ref{properties} exclusively to such partial (co)actions.

\medbreak

	A \emph{right partial coaction of a bialgebra $H$ on an algebra $A$} is a linear map $\rho: A \longrightarrow A\otimes H$ such that:
	\begin{enumerate}
		\item [(i)] $[(Id_A \otimes \varepsilon_H) \circ \rho](a)=a$;
		\item [(ii)] $\rho(ab)=\rho(a)\rho(b)$;
		\item [(iii)] $[(\rho \otimes Id_H) \circ \rho](a) = (\rho(1_A) \otimes 1_H)([(Id_A \otimes \Delta_H) \circ \rho](a))$,
	\end{enumerate}
	for all $a, b \in A$.
	In this case, $A$ is called a \textit{right partial $H$-comodule algebra}.
	
	A right partial coaction is called \emph{symmetric} if satisfies additionally
	\begin{enumerate}
		\item[(iv)] $[(\rho \otimes Id_H) \circ \rho](a) = ([(Id_A \otimes \Delta_H) \circ \rho](a))(\rho(1_A) \otimes 1_H)$,
	\end{enumerate}
	for all $a \in A$.
	In this case, $A$ is called a \textit{symmetric right partial $H$-comodule algebra}.
We write simply $a^0 \o a^1$ to mean $\rho(a) = \sum_{i =1}^{n(a)} a_i \o h_i \in A \o  H$.

\medbreak

Similarly, we have the definition of a \emph{left partial coaction}.
Throughout this work we will deal only with right partial coactions, shortly called \emph{partial coactions}.

\smallbreak

It is a well-known fact that an $H$-comodule algebra is a symmetric partial $H$-comodule algebra. Moreover, a partial $H$-comodule algebra is an $H$-comodule algebra if and only if $\rho(1_A) = 1_A \otimes 1_H$. 

\smallbreak

The following example is the analogous of Example \ref{exemplo_inicial} for the setting of partial coactions, that is, $\k G$ coacts partially on any algebra $A$ through scalars.

\begin{ex} Let $G$ be a group, $N$ a finite subgroup of $G$ and suppose that $char(\k) \nmid |N|$.
Consider the element $z=\frac{1}{|N|}\sum_{g \in N}g \in \k G$.
	Then, for any algebra $A$, the linear map $\rho_N : A \longrightarrow A \o \k G$ given by $\rho_N (a) = a \o z$, for all $a \in A$, is a symmetric partial coaction of $\k G$ on $A$.
	If $N \neq \{1\}$, then $z \neq 1_{\k G}$ and so $\rho_N(1_A) = 1_A \o z \neq 1_A \o 1_{\k G}$.
	Therefore, if $N \neq \{1\}$, then $\rho_N$  is a \emph{genuine partial coaction}, \emph{i.e.}, $\rho_N$ is not a global coaction.
\end{ex}

The previous example can be generalized for any bialgebra $H$ as follows: if $\rho: \k \longrightarrow \k \o H$ is a partial coaction of $H$ on $\k$, then, for any algebra $A$, the linear map $\tilde{\rho} : A \longrightarrow A \o H$ given by $\tilde{\rho}(a) = (a \o 1_H) \rho(1_\k)$, for all $a \in A$, is a partial coaction of $H$ on $A$.

\smallbreak

To conclude this subsection, we consider restrictions and extensions of partial (co)actions.

\begin{rem}\label{restricoes}
	Let $A$ be an algebra, $H$ a bialgebra and $B \subseteq H$ a subbialgebra.
	If $\ap : H \o A \longrightarrow A$ is a (symmetric) partial action of $H$ on $A$, then $\ap |_{B \o A} : B \o A \longrightarrow A$ is a (symmetric) partial action of $B$ on $A$.
	If $\rho : A \longrightarrow A \o B$ is a (symmetric) partial coaction of $B$ on $A$, then $\tilde{\rho} : A \longrightarrow A \o H$, given by $\tilde{\rho}(a) = \rho(a)$ for all $a \in A$, is a (symmetric) partial coaction of $H$ on $A$.
\end{rem}

\subsection{Partial (co)actions on the base field}\label{properties}

In this subsection we deal with partial (co)actions of a bialgebra $H$ on its base field $\k$.
We exhibit all Hopf algebras whose partial (co)actions on their base fields are classified.
Moreover, some useful results to calculate such partial (co)actions are presented.

\medbreak

The following lemma characterizes partial actions of a bialgebra $H$ on its base field $\k$ through maps $\lda \in H^*$ satisfying some properties \cite{Alvares_Alves_Batista, glob_twisted_patial_hopf}.
This correspondence holds for more general settings, \emph{e.g.}, if $H$ is a weak bialgebra \cite{guris}.
\begin{lema}
	\label{k_mod_alg_parc}\label{eqn_lda}
	Let $H$ be a bialgebra and $\lda: H \longrightarrow \k$ a linear map. Then, $\lda$ defines a partial action of $H$ on $\k$ via $h \ap 1_\k = \lda (h)$, for all $h \in H$, if and only if $\lda(1_H)=1_\k$ and
	\begin{align}\label{eq}
	\lda(h)\lda(y)=\lda(h_1)\lda(h_2y)
	\end{align}
	holds for all $h, y \in H$.
Moreover, the partial action is symmetric if and only if $\lda$ satisfies additionally the condition
\begin{align}\label{eqn_lda_sim}
\lda(h)\lda(y)=\lda(h_1y)\lda(h_2),
\end{align}
for all $h, y \in H$.
\end{lema}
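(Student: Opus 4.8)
The plan is to exploit that $\k$ is one-dimensional, so that a partial action $\ap\colon H \o \k \to \k$ is completely determined by the scalar $\lda(h) := h \ap 1_\k$: by $\k$-bilinearity, $h \ap a = a\,\lda(h)$ for every $h \in H$ and $a \in \k$. Under this identification each of the axioms (i)--(iii) becomes a numerical identity, and since multiplication in $\k$ is just the product of scalars, it suffices to verify each axiom on the generator $a = 1_\k$ (and $b = 1_\k$); the general case then follows by pulling the scalars $a, b$ out of both sides. So the whole argument reduces to specializing the axioms at $1_\k$ and reading off the resulting identities.

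For the forward implication, I would first read axiom (i) at $a = 1_\k$ to obtain $\lda(1_H) = 1_H \ap 1_\k = 1_\k$. Next I would specialize axiom (iii) at $a = 1_\k$: the left-hand side is $h \ap (k \ap 1_\k) = h \ap \lda(k) = \lda(k)\lda(h)$, while the right-hand side is $(h_1 \ap 1_\k)(h_2 k \ap 1_\k) = \lda(h_1)\lda(h_2 k)$. Renaming $k$ as $y$ yields exactly \eqref{eq}, giving both required conditions.

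For the converse, assume $\lda(1_H) = 1_\k$ and \eqref{eq}. Axiom (i) is immediate since $1_H \ap a = a\,\lda(1_H) = a$. The key observation --- and the one place where a little care is needed --- is that axiom (ii) is \emph{not} an independent requirement: setting $y = 1_H$ in \eqref{eq} and using $\lda(1_H)=1_\k$ gives $\lda(h) = \lda(h_1)\lda(h_2)$, whence $h \ap ab = ab\,\lda(h) = ab\,\lda(h_1)\lda(h_2) = (h_1 \ap a)(h_2 \ap b)$. Axiom (iii) for general $a$ then reads $a\,\lda(k)\lda(h) = a\,\lda(h_1)\lda(h_2 k)$, which is \eqref{eq} multiplied by $a$. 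Thus all three axioms hold and $\lda$ defines a partial action.

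Finally, for the symmetric statement I would repeat the specialization argument on the extra axiom (iv). Evaluating (iv) at $a = 1_\k$ produces $\lda(h)\lda(k)$ on the left and $\lda(h_1 k)\lda(h_2)$ on the right, which is \eqref{eqn_lda_sim} after renaming $k$ as $y$; conversely \eqref{eqn_lda_sim} multiplied by the scalar $a$ recovers (iv) in full. The main obstacle here is conceptual rather than computational: one must recognize that passing to $A = \k$ collapses the multiplicativity axiom (ii) into a consequence of \eqref{eq}, so that exactly one nontrivial identity survives for the partial-action condition and one more for symmetry.
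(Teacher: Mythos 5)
Your proof is correct. Note that the paper does not actually prove this lemma: it states it as a known characterization, citing the references \cite{Alvares_Alves_Batista, glob_twisted_patial_hopf}, so there is no internal proof to compare against; your argument is precisely the standard verification that those sources supply. The one point that genuinely requires care --- that the multiplicativity axiom (ii) is not an independent condition but follows from \eqref{eq} with $y = 1_H$ together with $\lda(1_H)=1_\k$, yielding $\lda(h)=\lda(h_1)\lda(h_2)$ --- is handled correctly, and it is exactly the observation the paper records immediately after the lemma, namely that any such $\lda$ is an idempotent in the convolution algebra $H^*$.
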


Using this one-to-one correspondence, from now on, by a partial action of a Hopf algebra $H$ on its base field $\k$ we mean a linear map $\lda \in H^*$ such that $\lda(1_H)=1_\k$ and satisfies the condition \eqref{eq}.

\smallbreak

	Observe that if $\lda \in H^*$ is a partial action of $H$ on $\k$, then $\lda$ is an idempotent element in the convolution algebra $H^*$.

\smallbreak

To the best of the authors' knowledge, the next examples are all the Hopf algebras that have partial actions on their base field classified in the literature.
Such results first appear in \cite{Alvares_Alves_Batista} and can be found in several later works  \cite{glob_twisted_patial_hopf, Partial_representations}.

	\begin{ex}[\bf{Partial actions of $\k G$ on $\k$}] \label{lda_kg}
	Let $G$ be a group.
	The partial actions of $\k G$ on $\k$ are in one-to-one correspondence with the subgroups of $G$.
	Indeed, if $N$ is a subgroup of $G$, then $\lda_N$ is a partial action of $\k G$ on $\k$, where $\lda_N: \k G \longrightarrow \k $ is defined by $\lda_N (g) = 1_\k$ if $g \in N$ and $\lda_N (g) = 0$ otherwise.
	Conversely, let $\lda: \k G \longrightarrow \k$ be a partial action of $\k G$ on $\k$.
	Then, the subset $\{g \in G \, : \, \lda(g) \neq 0 \} = \{g \in G : \lda(g) =1_\k \}$ is a subgroup of $G$.
	\end{ex}
	
	\begin{ex}[\bf{Partial actions of $(\k G)^*$ on $\k$}] \label{lda_kg_est}
	Let $G$ be a finite group.
Partial actions of $(\k G)^*$ on $\k$ are in correspondence with the set
$$\{ N \ : \ N \textrm{ is a subgroup of } G \textrm{ and } char(\k) \nmid |N| \}.$$
Indeed, let $\{g^* : g \in G\}$ be the dual basis of the canonical basis $\{g : g \in G\}$ of $\k G$.
For each subgroup $N$ of $G$ such that $char(\k) \nmid |N|$, the linear map $\lambda^*_N : (\k G)^* \longrightarrow \k$, given by
$\lda^*_N (g^*) = 1_\k/ |N|$ if $g \in N$ and $\lda^*_N (g^*) = 0$ otherwise, is a partial action of $(\k G)^*$ on $\k$.
Conversely, let $\lda$ be a partial action of $(\k G)^*$ on $\k$.
Then, the subset $M = \{g \in G \, : \, \lda(g^*) \neq 0 \}$ is a subgroup of $G$.
In this case, it is known that $\lda(g^*) = 1_\k / |M|$, for all $g \in M$, and so $\lda = \lda^*_M$.
\end{ex}

\begin{ex}[\bf{Partial actions of Sweedler's Hopf algebra on $\k$}]\label{lda_sweedler}
	Suppose $char(\k) \neq 2$ and consider the Sweedler's 4-dimensional Hopf algebra $\mathbb{H}_4$.
	Precisely, $\mathbb{H}_4$ is the Hopf algebra generated by a group-like element $g$ and an $(1,g)$-primitive element $x$ with relations $g^2=1, x^2=0$ and $xg =-gx$.
	The set $\{1, g, x, gx\}$ is a basis for $\mathbb{H}_4$.
	For each $\alpha \in \k$, the linear map $\lda_\alpha : \mathbb{H}_4 \longrightarrow \k$ given by $\lda_\alpha (1)=1_\k, \lda_\alpha(g)=0$ and $\lda_\alpha(x)=\lda_\alpha(gx)= \alpha$ is a partial action of $\mathbb{H}_4$ on $\k$.
	Furthermore, if $\lda$ is a partial action of $\mathbb{H}_4$ on $\k$, then $\lda=\varepsilon$ (global action) or $\lda=\lda_\alpha$, for some $\alpha \in \k$.
\end{ex}

\begin{ex}[\bf{Partial actions of $U(\mathfrak{g})$ on $\k$}] \label{lda_U_lie}
	Let $\mathfrak{g}$ be a Lie algebra and $U(\mathfrak{g})$ its universal enveloping algebra.
	As stated in Example \ref{lda_Ug}, the unique partial action of $U(\mathfrak{g})$ on $\k$ is the global action given by its counit.
\end{ex}

\noindent \textbf{Remark.} Using condition \eqref{eqn_lda_sim}, it is straightforward to check that every partial action presented in the previous examples are symmetric.

\medbreak

Partial coactions of a bialgebra on its base field have the following characterization, adapted from \cite[Example 2.9]{caenepeel2008partial} to symmetric setting that we are dealing.
\begin{lema}\label{k_comod_alg_parc}
	Let $H$ be a bialgebra. Then, $\k$ is a partial $H$-comodule algebra if and only if there is an idempotent element $z \in H$ such that $\varepsilon_H (z)=1_\k$ and
	\begin{align}\label{eqn_z} z \otimes z = (z \otimes 1_H)\Delta_H(z).
	\end{align}
	Moreover, $\k$ is a symmetric partial $H$-comodule algebra if and only if the idempotent element $z$ satisfies additionally 
	\begin{align}\label{eqn_z_sim}
	z \o z = \Delta_H(z)(z \otimes 1_H).
	\end{align}
\end{lema}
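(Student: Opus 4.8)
The plan is to use that $\k$ is one-dimensional to reduce the entire comodule structure to a single element of $H$. Since a right partial coaction $\rho \colon \k \longrightarrow \k \o H$ is linear and $\k$ is spanned by $1_\k$, the map $\rho$ is completely determined by $\rho(1_\k)$. Using the canonical algebra isomorphism $\k \o H \cong H$, $\alpha \o h \mapsto \alpha h$, I would write $\rho(1_\k) = 1_\k \o z$ for a unique $z \in H$, and conversely any $z \in H$ yields a linear map $\rho$ in this way. The whole proof then consists of rewriting each of the four axioms as an identity involving $z$, checking at every step that the translation is an equivalence, so that both implications are obtained simultaneously.

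First I would treat the counit axiom (i): applying $Id_\k \o \varepsilon_H$ to $1_\k \o z$ gives $\varepsilon_H(z)\, 1_\k$, so (i) holds exactly when $\varepsilon_H(z) = 1_\k$. Next, for multiplicativity (ii), I would observe that because $\k$ is one-dimensional it suffices to test $a = b = 1_\k$: writing the product in $\k \o H$ componentwise, $\rho(1_\k)\rho(1_\k) = 1_\k \o z^2$, so (ii) amounts to $z^2 = z$, that is, $z$ is idempotent. Linearity then extends multiplicativity to all of $\k$.

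The core computation is axiom (iii), which I would expand in $\k \o H \o H$. The left-hand side $(\rho \o Id_H) \circ \rho$ applied to $1_\k$ gives $1_\k \o z \o z$. For the right-hand side, $(Id_\k \o \Delta_H) \circ \rho$ sends $1_\k$ to $1_\k \o z_1 \o z_2$, and multiplying on the left by $\rho(1_\k) \o 1_H = 1_\k \o z \o 1_H$, again componentwise, produces $1_\k \o z z_1 \o z_2$. Equating the two and discarding the redundant $\k$-factor yields precisely $z \o z = (z \o 1_H)\Delta_H(z)$, which is \eqref{eqn_z}. Because every manipulation here is reversible, (iii) is equivalent to \eqref{eqn_z}, completing the characterization of partial $H$-comodule algebras.

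Finally, for the symmetric case I would repeat the computation of (iv) in the same manner; the only change is that $\rho(1_\k) \o 1_H$ now multiplies on the right, giving $1_\k \o z_1 z \o z_2$, so (iv) is equivalent to $z \o z = \Delta_H(z)(z \o 1_H)$, which is \eqref{eqn_z_sim}. Since the argument proceeds entirely through equivalences, both directions of each claim follow at once. The proof is essentially a bookkeeping exercise: the only point requiring care is tracking the componentwise multiplication in $\k \o H$ and $\k \o H \o H$ under the identification with $H$ and $H \o H$, and keeping the Sweedler factors of $\Delta_H(z)$ on the correct tensor component; I do not expect a genuine obstacle beyond this.
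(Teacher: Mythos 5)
Your proof is correct and follows exactly the intended argument: since $\k$ is one-dimensional, the identification $\rho(1_\k)=1_\k\otimes z$ (which the paper itself sets up right after the lemma) turns axioms (i)--(iv) respectively into $\varepsilon_H(z)=1_\k$, idempotency of $z$, \eqref{eqn_z} and \eqref{eqn_z_sim}, with every translation an equivalence. The paper omits the verification, citing Caenepeel--Janssen, but your computation is precisely that standard bookkeeping, carried out correctly.
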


For $ \rho : \k \longrightarrow \k \o H$ a linear map,
there exists a unique element $z \in H$ such that $\rho(1_\k) = 1_\k \o z$, and conversely each element $z \in H$ defines a linear map in this way.
We emphasize this fact denoting $\rho$ by $\rho_z$.  
Hence, Lemma \ref{k_comod_alg_parc} says that the map $\rho_z$ is a partial coaction of $H$ on $\k$ if and only if the element $z \in H$ satisfies $\varepsilon_H (z)=1_\k$ and \eqref{eqn_z}.
The partial coaction $\rho_z$ is symmetric if and only if \eqref{eqn_z_sim} also holds.
Thus, we will say that an element $z \in H$ is a partial coaction of $H$ on $\k$ to mean that $\rho_z$ is a partial coaction  of $H$ on $\k$.
Moreover, $\varepsilon_H (z)=1_\k$ and \eqref{eqn_z} imply that $z$ is an idempotent element.

\smallbreak

	Let $H$ be a finite-dimensional semisimple Hopf algebra. Then, there exists at least one element $ z \in H$ such that $\rho_z$ is a genuine partial coaction of $H$ on $\k$ \cite[Example 2.9]{caenepeel2008partial}.
	Indeed, it is enough to consider the right integral element $z$ such that $\varepsilon_H(z)=1_\k$.

\smallbreak

Now, using Lemma \ref{k_comod_alg_parc}, some examples are presented.

	\begin{ex}[\bf{Partial coactions of $U(\mathfrak{g})$ on $\k$}] \label{z_U_g}
	Let $\mathfrak{g}$ be a Lie algebra and $U(\mathfrak{g})$ its universal enveloping algebra.
	The unique partial coaction of $U(\mathfrak{g})$ on $\k$ is the global one. 
	Indeed, suppose that $z \in U(\mathfrak{g})$ is a partial coaction of $U(\mathfrak{g})$ on $\k$.
	Since $U(\mathfrak{g})$ is a domain (see, for instance, \cite[Theorem V.3.6]{Jacobson_lie_algebras}) and $z$ is an idempotent element satisfying $\varepsilon(z)=1_\k$, it follows that $z=1$ and therefore the global coaction.
\end{ex}

Let $H$ be a bialgebra.
	By Lemma \ref{k_comod_alg_parc} and \cite[Proposition 4.12]{glauber_e_fefi}, $\k$ is a right partial $H$-comodule algebra if and only if $\k$ is a left partial $H$-comodule coalgebra (\emph{cf.} \cite[Definition 6.1]{dual_constructions}).
Thus, all partial coactions of $\k G$ on $\k$ are given in \cite[Example 4.13]{glauber_e_fefi}, as below.
	\begin{ex}[\bf{Partial coactions of $\k G$ on $\k$}] \label{z_kg} 
	Let $G$ be a group and consider $z = \sum_{g \in N} \alpha_g g \in \k G$, where $N$ is a finite subset of $G$ and $0 \neq \alpha_g \in \k$, for all $g \in N$.
	Then, $z$ is a partial coaction of $\k G$ on $\k$ if and only if $N$ is a subgroup of $G$, $char(\k) \nmid |N|$ and $\alpha_g = 1/|N|$, for all $g \in N$.
	\end{ex}

\begin{ex}[\bf{Partial coactions of $(\k G)^*$ on $\k$}] \label{z_kg_est}
	Let $G$ be a finite group and consider $z = \sum_{g \in N}\alpha_g g^* \in (\k G)^*$, where $N$ is a subset of $G$ and $0 \neq \alpha_g \in \k$, for all $g \in N$.
	Then, $z$ is a partial coaction of $(\k G)^*$ on $\k$ if and only if $N$ is a subgroup of $G$ and $\alpha_g = 1$, for all $g \in N$.
	\end{ex}

\begin{ex}[\bf{Partial coactions of $\mathbb{H}_4$ on $\k$}] \label{z_sweedler}
	Assume $char(\k) \neq 2$.
	In \cite[Example 2.10]{caenepeel2008partial}, a family of genuine partial coactions of the Sweedler's 4-dimensional Hopf algebra $\mathbb{H}_4$ on its base field $\k$ is introduced.
	Precisely, for any $\alpha \in \k$, $z_\alpha = \frac{1+g}{2}-\alpha gx$ is a partial coaction of $\mathbb{H}_{4}$ on $\k$. 
\end{ex}

\noindent \textbf{Remark.} Using condition \eqref{eqn_z_sim}, it is easy to check that all partial coactions presented in the previous examples are symmetric.

\medbreak

The concepts of partial actions and partial coactions of a Hopf algebra on an algebra are dually related \cite{enveloping, Globalization}, as in the global case. 
Using the characterizations given in lemmas \ref{k_mod_alg_parc} and \ref{k_comod_alg_parc}, we state below such a dual relation in the particular case when $H$ is a finite-dimensional Hopf algebra (co)acting partially on its base field $\k$. 
For the general case see \cite[Corollary 1]{enveloping}.

\begin{prop}\label{dual_corpo}
	Let $H$ be a finite-dimensional Hopf algebra. Then, 
	$\k$ is a (symmetric) partial $H$-module algebra via $\lda \in H^*$ if and only if $\k$ is a (symmetric) partial $H^*$-comodule algebra via $\rho_\lda$.
\end{prop}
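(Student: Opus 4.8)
The plan is to exploit the two characterizations already established, namely Lemma \ref{k_mod_alg_parc} for partial actions on $\k$ and Lemma \ref{k_comod_alg_parc} for partial coactions on $\k$, and to translate between them via the canonical finite-dimensional duality $H^{**} \cong H$. First I would fix the setup: given $\lda \in H^*$, since $H$ is finite-dimensional, $\lda$ is itself an element of the Hopf algebra $H^*$, so it makes sense to ask whether $\lda$ is a partial coaction of $H^*$ on $\k$ in the sense of Lemma \ref{k_comod_alg_parc}, \ie whether $z := \lda$ satisfies $\varepsilon_{H^*}(\lda) = 1_\k$ together with \eqref{eqn_z} and (for the symmetric case) \eqref{eqn_z_sim}. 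The whole proof is then a matter of checking that the conditions of the two lemmas correspond to each other termwise under duality.

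The key translation facts I would record explicitly are the standard ones for $H^*$ when $\dim_\k H < \infty$: the counit of $H^*$ is evaluation at $1_H$, \ie $\varepsilon_{H^*}(\lda) = \lda(1_H)$; the multiplication in $H^*$ is the convolution product dual to $\Delta_H$; and, crucially, the comultiplication $\Delta_{H^*}: H^* \to H^* \o H^*$ is dual to the multiplication of $H$, meaning that for $\lda \in H^*$ and $h, y \in H$ one has $\lg \Delta_{H^*}(\lda), h \o y \rg = \lda(hy)$. With these in hand, the condition $\varepsilon_{H^*}(\lda) = 1_\k$ is literally $\lda(1_H) = 1_\k$, matching the normalization in Lemma \ref{k_mod_alg_parc}. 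Then I would pair both sides of \eqref{eqn_z}, written for $z = \lda$ in $H^*$, against an arbitrary elementary tensor $h \o y \in H \o H$: the left-hand side $\lda \o \lda$ pairs to $\lda(h)\lda(y)$, while the right-hand side $(\lda \o 1_{H^*})\Delta_{H^*}(\lda)$ unwinds, using that convolution in $H^* \o H^*$ is dual to $\Delta_{H \o H}$ and that $\Delta_{H^*}(\lda)$ evaluates via the product of $H$, to exactly $\lda(h_1)\lda(h_2 y)$. Thus \eqref{eqn_z} for $\lda$ is equivalent to \eqref{eq}, and the same bookkeeping turns \eqref{eqn_z_sim} into \eqref{eqn_lda_sim}, settling the symmetric refinement simultaneously.

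The main obstacle, and the step I would be most careful about, is getting the duality bookkeeping on $H^* \o H^*$ exactly right: one must be precise about how the product $(\lda \o 1_{H^*})\Delta_{H^*}(\lda)$, which lives in $H^* \o H^*$ and is multiplied componentwise in the algebra $H^* \o H^*$, pairs against $h \o y$, and in particular how the Sweedler legs $h_1 \o h_2$ arising from $\Delta_{H^*}(\lda)$ recombine so that the second tensor factor sees the product $h_2 y$ rather than $y h_2$. Here the finite-dimensionality is what guarantees $\Delta_{H^*}$ is genuinely the dual of the multiplication of $H$ (so that no completed tensor product is needed), and I would state this once as the linchpin. Once that pairing is carried out on elementary tensors $h \o y$, both equivalences follow because such tensors span $H \o H$ and all maps in sight are $\k$-linear; the biconditional in the statement is then immediate by combining the two lemmas, noting that the ``(symmetric)'' qualifier propagates because conditions \eqref{eqn_lda_sim} and \eqref{eqn_z_sim} correspond under the very same pairing.
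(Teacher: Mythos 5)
Your proposal is correct, and it is precisely the argument the paper intends: the paper states Proposition \ref{dual_corpo} by appealing to the characterizations in Lemmas \ref{k_mod_alg_parc} and \ref{k_comod_alg_parc}, and your duality bookkeeping (pairing \eqref{eqn_z} and \eqref{eqn_z_sim} against $h \o y$ to recover \eqref{eq} and \eqref{eqn_lda_sim}) is exactly the verification this reduction requires. The only cosmetic remark is that the idempotency demanded of $z$ in Lemma \ref{k_comod_alg_parc} needs no separate check, since (as the paper notes) it already follows from $\varepsilon_{H^*}(\lda)=1_\k$ and \eqref{eqn_z}, or equivalently from \eqref{eq} with $y=1_H$.
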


In others words, the above result says that, if $H$ is a finite-dimensional Hopf algebra, then the map $\lda \in H^*$ is a (symmetric) partial action of $H$ on $\k$ if and only if $\lda \in H^*$ is a (symmetric) partial coaction of $H^*$ on $\k$.

From Example \ref{lda_kg} and Proposition \ref{dual_corpo}, one easily obtain Example \ref{z_kg_est}.
Besides, since the Sweedler's 4-dimensional Hopf algebra $\mathbb{H}_{4}$ is a self-dual Hopf algebra, it follows by Example \ref{lda_sweedler} and Proposition \ref{dual_corpo} that Example \ref{z_sweedler} characterizes all partial coactions of $\mathbb{H}_{4}$ on $\k$, \emph{i.e.}, if $z \in \mathbb{H}_{4}$ is a partial coaction of $\mathbb{H}_{4}$ on $\k$, then $z=1$ (global) or $z = z_\alpha$, for some $\alpha \in \k$, as given in Example \ref{z_sweedler}.

\medbreak

Now let us recall Remark \ref{restricoes} with suitable data: let $H$ be a Hopf algebra and $B= \k G(H)$ its Hopf subalgebra of the group-like elements. 
If $\lda : H \longrightarrow \k$ is a partial action of $H$ on $\k$, then $\lda |_{\k G(H)} : \k G(H) \longrightarrow \k$ is a partial action of the group algebra $\k G(H)$ on $\k$ and so, by Example \ref{lda_kg}, there exists a subgroup $N$ of $G(H)$ such that $\lda |_{\k G(H)} = \lda_N$.
Besides, for any finite subgroup $N$ of $G(H)$ such that $char(\k) \nmid |N|$, the idempotent element $z_N = \frac{1}{|N|} \sum_{g \in N} g \in \k G(H)$ is a partial coaction of $\k G(H)$ on $\k$ (see Example \ref{z_kg}), and then $z_N$ also is a partial coaction of $H$ on $\k$.

\medbreak

The evaluation of a partial action on its base field in some elements is presented below.

\begin{prop}\label{propriedades_lda}
	Let $H$ be a Hopf algebra and $\lda: H \longrightarrow \k$ a partial action of $H$ on $\k$.
	Consider the elements $g, t, x \in H$ such that $g, t \in G(H)$ and $x$ is a $(g,t)$-primitive element.
	Then:
	\begin{itemize}
		\item[(i)] if $\lda(g)=1$, then $\lda(gu)=\lda(u)$, for all $u \in H$;
		\item[(ii)] if $\lda(g)=\lda(t)$, then $\lda(x)=0$;
		\item[(iii)] if $\lda(x)=0$ and $\lda(t)=1$, then $\lda(xu)=0$, for all $u \in H$.
	\end{itemize}
\end{prop}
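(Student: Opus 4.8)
The plan is to extract all three statements from the single defining identity \eqref{eq}, namely $\lda(h)\lda(y)=\lda(h_1)\lda(h_2y)$, by specializing $h$ to $g$ or $x$ and choosing $y$ appropriately. I will also exploit the observation recorded after Lemma \ref{k_mod_alg_parc} that $\lda$ is idempotent in the convolution algebra $H^*$; concretely, putting $y=1_H$ in \eqref{eq} and using $\lda(1_H)=1_\k$ gives $\lda(h)=\lda(h_1)\lda(h_2)$ for every $h\in H$. Evaluating this at a group-like element yields $\lda(g)=\lda(g)^2$, so that $\lda(g),\lda(t)\in\{0,1\}$; this elementary fact is what will power part (ii).

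For (i) I would set $h=g$ in \eqref{eq}. Since $g$ is group-like, $\Delta(g)=g\o g$, so the identity reads $\lda(g)\lda(y)=\lda(g)\lda(gy)$ for all $y\in H$. The hypothesis $\lda(g)=1$ then cancels the factor $\lda(g)$ and gives $\lda(y)=\lda(gy)$; taking $y=u$ is the claim. For (iii) I would instead set $h=x$ in \eqref{eq} and expand $\Delta(x)=x\o g+t\o x$, obtaining $\lda(x)\lda(y)=\lda(x)\lda(gy)+\lda(t)\lda(xy)$ for all $y\in H$. The hypotheses $\lda(x)=0$ and $\lda(t)=1$ annihilate, respectively, the left-hand side and the first right-hand term, leaving $0=\lda(xy)$; with $y=u$ this is exactly the assertion. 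Thus (i) and (iii) are essentially immediate consequences of the right substitution into \eqref{eq}.

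The only part requiring a moment's care is (ii), and the main (mild) obstacle there is recognizing that one must invoke the $\{0,1\}$-valuedness of $\lda$ on group-likes rather than merely rearranging \eqref{eq}. Applying the convolution-idempotency $\lda(x)=\lda(x_1)\lda(x_2)$ and expanding $\Delta(x)$ gives $\lda(x)=\lda(x)\lda(g)+\lda(t)\lda(x)$. Under the hypothesis $\lda(g)=\lda(t)$ this rearranges to $(1-2\lda(g))\lda(x)=0$. Since $\lda(g)\in\{0,1\}$, the scalar $1-2\lda(g)$ equals $1$ or $-1$, hence is nonzero in every characteristic (in characteristic $2$ it simply equals $1$), and therefore $\lda(x)=0$, completing the argument.
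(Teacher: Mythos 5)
Your proof is correct and takes essentially the same approach as the paper: the paper's entire proof is the remark that the claims follow from applying \eqref{eq} to well-chosen elements, and your substitutions ($h=g$ for (i), $h=x$ for (iii), and $y=1_H$ to obtain the convolution idempotency $\lda(h)=\lda(h_1)\lda(h_2)$, an observation the paper records right after Lemma \ref{k_mod_alg_parc}) are precisely such choices. Your extra care in (ii)---noting that $\lda(g)\in\{0,1\}$ makes $1-2\lda(g)\in\{1,-1\}$ nonzero in every characteristic---is a correct handling of the one detail the paper leaves implicit.
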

\begin{proof}
	It follows directly from the application of \eqref{eq} for good choices of elements of $H$.
\end{proof}

\section{Partial (Co)Actions of the Taft Algebra $T_n(q)$ on $\k$}\label{sec:Taft}
In this section we calculate all partial (co)actions of the Taft algebra $T_n(q)$ on its base field $\k$.
Next we recall this family of finite-dimensional Hopf algebras introduced by Taft in \cite{Taft}.

\medbreak

Let $n \geq 2$ be an integer and suppose that $\k$ contains a primitive $n^{th}$ root of unity $q$.
In particular $char(\k) \nmid n$.
The \emph{Taft algebra of order $n$}, or shortly \emph{Taft algebra}, here denoted by $T_n(q)$, has the following structure:
as algebra it is generated over $\k$ by two elements $g$ and $x$ with relations $g^n=1,$ $x^n =0$ and $xg = q gx$.
Thus, the set $\{g^ix^j \ : \  i, j \in \I_{0,n-1} \}$ is the canonical basis for $T_n(q)$ and so $dim_\k(T_n(q)) = n^{2}$.
The coalgebra structure of $T_n(q)$ is induced by setting $g$ a group-like element and $x$ an $(1,g)$-primitive element, that is, $\Delta (g) = g \o g$, $\Delta (x) = x \o 1 + g \o x$, $\varepsilon (g) = 1$ and $\varepsilon (x) = 0$.
In general, the comultiplication map is $\Delta (g^ix^j) = \sum_{\ell=0}^j {j \choose \ell}_q  g^{i+\ell}x^{j-\ell} \o g^ix^\ell$, for $i, j \in \I_{0,n-1}$.
To complete the Hopf algebra structure of $T_n(q)$, the antipode map $S$ is defined by $S(g) =g^{n-1}$ and $S(x) = -g^{n-1}x.$
Furthermore, note that the group of group-like elements is $G( T_n(q) ) = \langle g \rangle = \{ 1, g, \cdots, g^{n-1} \}$, \emph{i.e.}, $ G( T_n(q) ) =  C_n.$

\smallbreak

It is a well-known fact that Taft algebras are self-dual Hopf algebras.
This property will allow us to calculate partial coactions from partial actions of the Taft algebra on its base field.

When $n=2$, the Taft algebra $T_2(-1)$ is exactly the Sweedler's four-dimensional Hopf algebra $\mathbb{H}_{4}$ (see Example \ref{lda_sweedler}).

\subsection{Partial Actions}\label{Subsec:acoes_taft}
In this subsection we shall see that there exist two families of genuine partial actions of $T_n(q)$ on $\k$, one parameterized by $\k$ and another by non-trivial subgroups of $C_n$.

\medbreak

Let $\lda \in T_n(q)^*$ be a partial action of $T_n(q)$ on $\k$. Then, the restriction of $\lda$ to the Hopf subalgebra $\k G(T_n(q)) = \k C_n$ is a partial action of $\k C_n$ on $\k$.
 
In this case, we know  that $\lda{|_{\k G(T_n(q) ) }}$ is parameterized by a subgroup $N$ of $C_n,$ that is, $\lda{|_{\k G(T_n(q) ) }} = \lda_N$.
Furthermore, the associated subgroup is $N=\{g \in G(T_n(q)) \ : \ \lda(g)=1_\k \}$.
See Example \ref{lda_kg}.

\medbreak

For what follows, we define:
\begin{enumerate}
	\item for each $\alpha \in \k$, the linear map $\lda_\alpha: T_n(q) \longrightarrow \k$ given by
	\begin{equation}\label{def_lda_alpha}
	\lda_{\alpha}(g^{n-i}x^{j}) = q^{\frac{i(i+1)}{2}} {j \choose i}_q  (-1)^i \alpha^j,
	\end{equation}
	for all $i,j \in \I_{0,n-1}$.
	In particular, $\lda_\alpha (x) = \alpha$.
	
	\smallbreak
	
	\item for each subgroup $N$ of $C_n$, the linear map $\lda_N^0: T_n(q) \longrightarrow \k$ defined as 
	\begin{equation}\label{def_lda_N_zero}
	\lda_N^0(g^ix^j) = \delta_{j,0}\lda_N(g^i).
	\end{equation}
	Note that $\lda_{C_n}^0 = \varepsilon$ and $\lda_{\{1\}}^0 = \lda_0$ (as in \eqref{def_lda_alpha}).
	In terms of the dual basis, we have
\begin{equation}\label{lda_N_base_dual}
		\lda_N^0 = \sum_{h \in N} h^*.
\end{equation}
\end{enumerate}

So, here is the main result of this subsection:
\begin{teo}\label{taft}
	Let $\lda:T_n(q) \longrightarrow \k$ be a linear map. 
	Then, $\lda$ is a partial action of $T_n(q)$ on $\k$ if and only if $\lda = \varepsilon$ (global action), $\lda=\lda_N^0$ or $\lda=\lda_\alpha$, where 
	$\alpha \in \k$ and $N$ is a non-trivial subgroup of $G(T_n(q) )$.
\end{teo}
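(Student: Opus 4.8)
The plan is to use the characterization of Lemma \ref{k_mod_alg_parc}: a linear map $\lda \colon T_n(q) \to \k$ is a partial action precisely when $\lda(1)=1_\k$ and
\[
\lda(h)\lda(y) = \lda(h_1)\lda(h_2 y)
\]
holds for all $h,y \in T_n(q)$, this being exactly condition \eqref{eq}. The ``if'' direction amounts to checking this identity for the three candidates. For $\varepsilon$ it is immediate, as it is the counit. For $\lda_N^0$, defined by \eqref{def_lda_N_zero}, the verification reduces essentially to the group-algebra case of Example \ref{lda_kg}, since $\lda_N^0$ is supported on $\k C_n$. For $\lda_\alpha$, defined by \eqref{def_lda_alpha}, it is a direct but genuinely computational check leaning on the $q$-Pascal identities \eqref{6.5} and \eqref{6.7} together with the vanishing of $\binom{n}{k}_q$ for $1\le k\le n-1$.

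For the ``only if'' direction, let $\lda$ be a partial action. By Remark \ref{restricoes}, its restriction to $\k G(T_n(q))=\k C_n$ is a partial action of $\k C_n$, so by Example \ref{lda_kg} there is a divisor $d$ of $n$ with $N=\langle g^d\rangle$ such that $\lda(g^i)=1$ if $d\mid i$ and $\lda(g^i)=0$ otherwise. Everything then hinges on computing $\lda$ on the basis vectors $g^ix^j$ with $j\ge 1$, which I organize by whether $\lda(g)\neq 0$. Suppose first $\lda(g)\neq 0$, equivalently $N=C_n$ (i.e.\ $d=1$). Taking $h=x$ in \eqref{eq} and using $\Delta(x)=x\o 1+g\o x$ gives $\lda(g)\lda(xy)=0$; with $y=g^ax^b$ and $xg^a=q^ag^ax$ this becomes $\lda(g)\lda(g^ax^{b+1})=0$, and since $\lda(g)\neq 0$ we get $\lda(g^ax^{j})=0$ for all $j\ge 1$, so $\lda=\varepsilon$ (which is $\lda_{C_n}^0$).

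Suppose now $\lda(g)=0$, i.e.\ $d\ge 2$. Two relations drive the argument. First, taking $h=g^d$ (group-like, with $\lda(g^d)=1$) in \eqref{eq} yields the periodicity $\lda(g^{c}x^{b})=\lda(g^{c+d}x^{b})$ for all $c,b$. Second, taking $h=g^{d-1}x$, whose coproduct is $g^{d-1}x\o g^{d-1}+g^d\o g^{d-1}x$, and $y=g^cx^b$, then using $g^{d-1}x\cdot g^cx^b=q^cg^{d-1+c}x^{b+1}$ and the periodicity, produces
\[
\lda(g^cx^{b+1}) = q^{-c-1}\,\nu\,\bigl[\lda(g^{c+1}x^{b})-\lda(g^{c}x^{b})\bigr],\qquad \nu:=\lda(g^{d-1}x),
\]
valid for $0\le b\le n-2$, with seed $b=0$ known from $N$. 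Evaluating at $(b,c)=(0,0)$ gives $\nu=-q\,\lda(x)$, and at $(b,c)=(0,d-1)$ gives $\nu=q^{-d}\nu$, that is $(q^{-d}-1)\nu=0$. This dichotomy is the heart of the classification: if $d=n$ then $q^d=1$ and $\nu$ (equivalently $\alpha:=\lda(x)$) is unconstrained, so the recursion determines $\lda$ uniquely from $\alpha$ and it coincides with the already-verified $\lda_\alpha$; if $1<d<n$ then $q^d\neq 1$ (as $q$ is a primitive $n$th root of unity), forcing $\nu=0$, whence the recursion gives $\lda(g^cx^j)=0$ for all $j\ge 1$ and therefore $\lda=\lda_N^0$ for the non-trivial proper subgroup $N$. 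These three outcomes reproduce exactly the list in the statement.

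The main obstacle is the ``only if'' direction: because $\lda$ is only partially multiplicative, one cannot reduce to the two algebra generators as in the global case, and one must instead pin down $\lda$ on the whole basis. The decisive steps are choosing the test elements $g^d$ and $g^{d-1}x$ that convert \eqref{eq} into the periodicity and the displayed recursion, and then reading off the root-of-unity alternative $(q^{-d}-1)\nu=0$ that separates the scalar family $\lda_\alpha$ ($d=n$) from the subgroup family $\lda_N^0$ ($d<n$). A secondary, purely technical difficulty is the $q$-combinatorial computation confirming that $\lda_\alpha$ from \eqref{def_lda_alpha} indeed satisfies \eqref{eq}.
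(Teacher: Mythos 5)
Your proof is correct and follows essentially the same route as the paper: restriction to $\k G(T_n(q))$ to get the subgroup $N=\langle g^d\rangle$, then exploitation of \eqref{eq} with the test elements $g^d$ and $g^{d-1}x$ to obtain exactly the paper's periodicity and recursion (equations \eqref{0}, \eqref{nova} and \eqref{prin}), with the root-of-unity dichotomy $(q^{d}-1)\nu=0$ separating $\lda_N^0$ from $\lda_\alpha$ just as in the paper's second and third cases. The differences are only organizational --- you unify the two non-global cases under one recursion and replace the paper's closed-form induction \eqref{acaotaft} by a uniqueness-of-recursion argument, while leaving the substantive verification that $\lda_\alpha$ satisfies \eqref{eq} (the paper's Proposition \ref{lda_alpha_eh_acao_parcial}, which rests on the nontrivial $q$-identity of Lemma \ref{lema1}) as an acknowledged computational sketch.
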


For the proof of Theorem \ref{taft}, we need some preliminary results, which we develop below.
\begin{lema}\label{lema1}
	Let $t,k \in \N_0$ and $q \in \k^{\times}$.
	Then, for each  $i \in \N_0$, 
	$$\sum_{s=0}^{i} {i \choose s}_q {{i+t-s} \choose {i+k}}_q (-1)^s q^{sk +\frac{s(s+1)}{2}}={t \choose k}_q.$$
\end{lema}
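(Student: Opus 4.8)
The plan is to fix the lower parameters and induct on $i$, but to run the induction \emph{uniformly in $t$ and $k$}, so that the induction hypothesis may be applied at shifted values of these parameters. Write $\Phi(i,t,k)$ for the left-hand side of the claimed identity. The base case $i=0$ is immediate, since the sum collapses to the single term $s=0$, which equals ${0 \choose 0}_q {t \choose k}_q = {t \choose k}_q$.

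For the inductive step I would assume the identity at level $i-1$ for all $t,k \in \N_0$, and first apply the $q$-Pascal identity \eqref{6.5} to the factor ${i \choose s}_q$, namely ${i \choose s}_q = {{i-1} \choose {s-1}}_q + q^{s}{{i-1} \choose s}_q$, splitting $\Phi(i,t,k)$ into two sums $A$ and $B$. In $A$ the term $s=0$ vanishes because ${{i-1} \choose {-1}}_q = 0$, so I would reindex via $s \mapsto s+1$; in $B$ the term $s=i$ vanishes because ${{i-1} \choose i}_q = 0$, so that sum effectively runs up to $i-1$. The key structural observation is that the surviving binomial ${{i+t-s} \choose {i+k}}_q$ is already in the shape required at level $i-1$: in $B$ its indices read $(i-1)+(t+1)-s$ over $(i-1)+(k+1)$, and in $A$, after the shift, they read $(i-1)+t-s$ over $(i-1)+(k+1)$.

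The heart of the argument is then tracking the powers of $q$. After $s \mapsto s+1$ in $A$, using $\frac{(s+1)(s+2)}{2} = \frac{s(s+1)}{2} + (s+1)$, the exponent reorganizes into $s(k+1) + \frac{s(s+1)}{2} + (k+1)$, which exhibits $A$ as $-q^{k+1}\,\Phi(i-1,t,k+1)$; likewise the extra factor $q^{s}$ carried by the second Pascal summand turns the exponent in $B$ into $s(k+1) + \frac{s(s+1)}{2}$, identifying $B$ with $\Phi(i-1,t+1,k+1)$. I expect this exponent bookkeeping under the reindexing to be the one genuinely error-prone step, and the place where the precise powers $q^{sk + \frac{s(s+1)}{2}}$ in the statement are seen to be exactly what makes the telescoping work.

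Finally, invoking the induction hypothesis on both pieces yields $\Phi(i,t,k) = {{t+1} \choose {k+1}}_q - q^{k+1}{t \choose {k+1}}_q$, and a single application of \eqref{6.5} with top index $t+1$ and bottom index $k+1$ collapses this to ${t \choose k}_q$, completing the induction. Throughout I would rely on the convention, recalled in the excerpt, that ${m \choose \ell}_q = 0$ for $\ell<0$ or $\ell>m$, which is what legitimizes dropping the vanishing boundary terms.
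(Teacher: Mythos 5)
Your proof is correct, and it takes a genuinely different route from the paper's, even though both are inductions on $i$ built on the $q$-Pascal identities. The paper keeps $t$ and $k$ fixed throughout the induction: it splits ${i \choose s}_q$ using \eqref{6.7} (the variant carrying the factor $q^{i-s}$), and then, after separating boundary terms and reindexing, must apply \eqref{6.5} \emph{termwise to the second binomial factor}, via ${{i+t-s} \choose {i+k}}_q - q^{k+i}{{i+t-s-1} \choose {i+k}}_q = {{i+t-s-1} \choose {i+k-1}}_q$, in order to reassemble everything as the same sum at level $i-1$ with the \emph{same} $t,k$; that regrouping is the bulk of its computation. You instead strengthen the induction hypothesis to hold for all $t,k \in \N_0$, split ${i \choose s}_q$ with the other Pascal variant \eqref{6.5}, and recognize the two resulting sums directly as $-q^{k+1}\,\Phi(i-1,t,k+1)$ and $\Phi(i-1,t+1,k+1)$ in your notation, so the induction hypothesis applies at shifted parameters and a single final application of \eqref{6.5}, namely ${{t+1} \choose {k+1}}_q - q^{k+1}{t \choose {k+1}}_q = {t \choose k}_q$, closes the step. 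Your exponent bookkeeping is right: in $A$, $(s+1)k+\frac{(s+1)(s+2)}{2} = s(k+1)+\frac{s(s+1)}{2}+(k+1)$ after the shift, and in $B$, $sk+s+\frac{s(s+1)}{2}=s(k+1)+\frac{s(s+1)}{2}$; the boundary terms vanish exactly as you say under the convention ${m \choose \ell}_q=0$ for $\ell<0$ or $\ell>m$. As for what each approach buys: the paper's induction is on a single statement with frozen parameters, so no strengthening is needed, but it pays with a long in-sum manipulation; your version trades a stronger (parameter-uniform) induction hypothesis for a much shorter telescoping argument, which makes transparent that the left-hand side obeys the same Pascal-type recursion as ${t \choose k}_q$ itself.
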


\begin{proof}
	We prove by induction on $i$. The statement holds for $i=0$. For $i > 0$, note that 
	\begin{align*}
	& \sum_{s=0}^{i} {i \choose s}_q {{i+t-s} \choose {i+k}}_q (-1)^s q^{sk +\frac{s(s+1)}{2}} \\
	= & {{i+t} \choose {i+k}}_q + \sum_{s=1}^{i-1} {i \choose s}_q {{i+t-s} \choose {i+k}}_q  (-1)^s q^{sk +\frac{s(s+1)}{2}} \\
	+ & {{t} \choose {i+k}}_q (-1)^i q^{ik +\frac{i(i+1)}{2}}.
	\end{align*}
	Since ${i \choose s}_q  = {{i-1} \choose s}_q +{{i-1} \choose {s-1}}_q q^{i-s}$ by \eqref{6.7}, we continue the above equality as equal to
	\begin{align*}
	& {{i+t} \choose {i+k}}_q + \sum_{s=1}^{i-1} {{i-1} \choose s}_q {{i+t-s} \choose {i+k}}_q (-1)^s q^{sk +\frac{s(s+1)}{2}} \\
	+ & \sum_{s=1}^{i-1} {{i-1} \choose {s-1}}_q {{i+t-s} \choose {i+k}}_q (-1)^s q^{i-s} q^{sk +\frac{s(s+1)}{2}}  \\
	+ & {{t} \choose {i+k}}_q (-1)^i q^{ik +\frac{i(i+1)}{2}} \\
	= & {{i+t} \choose {i+k}}_q + \sum_{s=1}^{i-2} {{i-1} \choose s}_q {{i+t-s} \choose {i+k}}_q (-1)^s q^{sk +\frac{s(s+1)}{2}} \\
	+ & {{t+1} \choose {i+k}}_q (-1)^{i-1} q^{(i-1)k +\frac{(i-1)i}{2}}  -  {{i+t-1} \choose {i+k}}_q q^{k+i} \\
	+ & \sum_{s=2}^{i-1} {{i-1} \choose {s-1}}_q {{i+t-s} \choose {i+k}}_q (-1)^s q^{k+i} q^{(s-1)k +\frac{(s-1)s}{2}} \\
	+ & {{t} \choose {i+k}}_q (-1)^i q^{ik +\frac{i(i+1)}{2}} \\
	 = & {{i+t} \choose {i+k}}_q + \sum_{s=1}^{i-2} {{i-1} \choose s}_q {{i+t-s} \choose {i+k}}_q (-1)^s q^{sk +\frac{s(s+1)}{2}}  \\
	+ & {{t+1} \choose {i+k}}_q (-1)^{i-1} q^{(i-1)k +\frac{(i-1)i}{2}} -  {{i+t-1} \choose {i+k}}_q q^{k+i}  \\
	- & \sum_{s=1}^{i-2} {{i-1} \choose {s}}_q {{i+t-s-1} \choose {i+k}}_q (-1)^s q^{k+i} q^{sk +\frac{s(s+1)}{2}}  \\
	+ & {{t} \choose {i+k}}_q (-1)^i q^{ik +\frac{i(i+1)}{2}}  \\
	= & \left[ {{i+t} \choose {i+k}}_q -  {{i+ t -1} \choose {i+k}}_q q^{k+i}  \right] \\
	+ & \sum_{s=1}^{i-2} {{i-1} \choose s}_q \left[ {{i+t-s} \choose {i+k}}_q  -  q^{k+i}{{i+t-s-1} \choose {i+k}}_q \right] (-1)^s q^{sk +\frac{s(s+1)}{2}} \\
	+ & \left[ {{t+1} \choose {i+k}}_q -  {{t} \choose {i+k}}_q q^{k+i}  \right]  (-1)^{i-1} q^{(i-1)k +\frac{(i-1)i}{2}}.
	\end{align*}
	Now, by \eqref{6.5}, we replace ${{i+t-s} \choose {i+k}}_q  -  q^{k+i}{{i+t-s-1} \choose {i+k}}_q = {{i+t-s-1} \choose {k+i-1}}_q,$ $ {{i+t} \choose {i+k}}_q - {{i+t-1} \choose {i+k}}_q  q^{k+i} = {{i+t-1} \choose {i+k-1}}_q$ and $  {{t+1} \choose {i+k}}_q -  {{t} \choose {i+k}}_q q^{k+i}  = {{t} \choose {i+k-1}}_q$, to obtain
	\begin{align*}
	&{{i+t-1} \choose {i+k-1}}_q + \sum_{s=1}^{i-2} {{i-1} \choose s}_q {{i+t-s-1} \choose {k+i-1}}_q (-1)^s q^{sk +\frac{s(s+1)}{2}} \\
	+ &  {{t} \choose {i+k-1}}_q (-1)^{i-1} q^{(i-1)k +\frac{(i-1)i}{2}} \\
	= & \sum_{s=0}^{i-1} {{i-1} \choose s}_q {{i+t-s-1} \choose {k+i-1}}_q (-1)^s q^{sk +\frac{s(s+1)}{2}},
	\end{align*}
and this concludes the induction step.
\end{proof}

\begin{lema}\label{prodqbinom}
	Let $i,j,\ell \in \Z$ such that $0 \leq \ell \leq i \leq j$.
	Then, 
	$${ j \choose \ell }_q { {j-\ell} \choose {i-\ell} }_q = { j \choose i }_q { i \choose \ell }_q .$$
\end{lema}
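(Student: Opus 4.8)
The plan is to reduce the whole statement to the explicit quotient formula for the $q$-binomial coefficients, splitting the argument according to whether $(j)_q!$ vanishes. The point is that the factorial formula applies to \emph{all} four coefficients precisely when $(j)_q!\neq 0$, and that the degenerate regime $(j)_q!=0$ is handled by an easier, essentially trivial, observation rather than by any computation.

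First I would treat the case $(j)_q!\neq 0$. Since $(j)_q!=(j)_q(j-1)_q\cdots(1)_q$, every factor $(m)_q$ with $1\le m\le j$ is nonzero, whence $(m)_q!\neq 0$ for all $m\le j$. As the indices occurring in the statement, namely $\ell,i,j,j-\ell,i-\ell,j-i$, all lie in $\I_{0,j}$, every coefficient is given by its quotient formula, and a direct cancellation (using $(j-\ell)-(i-\ell)=j-i$) gives
\[
{ j \choose \ell }_q { {j-\ell} \choose {i-\ell} }_q
= \frac{(j)_q!}{(\ell)_q!\,(j-\ell)_q!}\cdot\frac{(j-\ell)_q!}{(i-\ell)_q!\,(j-i)_q!}
= \frac{(j)_q!}{(\ell)_q!\,(i-\ell)_q!\,(j-i)_q!},
\]
while $ { j \choose i }_q { i \choose \ell }_q = \frac{(j)_q!}{(i)_q!\,(j-i)_q!}\cdot\frac{(i)_q!}{(\ell)_q!\,(i-\ell)_q!}$ yields the same expression. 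This is the routine part and carries no real difficulty.

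The case $(j)_q!=0$ is where the convention for the $q$-binomial coefficients forces most of them to collapse, and this is what I would exploit. I would first dispose of the boundary values $\ell=0$, $\ell=i$, and $i=j$: in each of these the identity is immediate, since on each side one factor reduces to a ${n \choose 0}_q=1$ or ${n \choose n}_q=1$ independently of whether $(j)_q!$ vanishes. For the remaining configuration $0<\ell<i<j$, both $\ell$ and $i$ lie in $\I_{j-1}$, so by the convention ${ j \choose \ell }_q=0={ j \choose i }_q$; hence the left- and right-hand sides each carry a vanishing factor and both equal $0$. This finishes the analysis.

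The single point requiring care—which I would flag as the conceptual \textbf{obstacle}, though it is not a computational one—is that the paper's $q$-binomial coefficients do \emph{not} agree with the specialization of the Gaussian polynomials at a root of unity (for instance ${4 \choose 2}_q$ is set to $0$ when $(4)_q!=0$, whereas the Gaussian polynomial need not vanish there). Consequently one cannot simply invoke the classical polynomial identity and specialize; the proof must respect the piecewise definition. The observation that makes this painless is exactly the one above: in the precise regime where the factorial formula fails, namely $(j)_q!=0$ together with the strict inequalities $0<\ell<i<j$, both sides of the asserted identity are simultaneously zero.
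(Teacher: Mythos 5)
Your proof is correct and follows essentially the same route as the paper's: the paper likewise dismisses the case $(j)_q!=0$ as clear (which your boundary/interior case analysis merely spells out) and then performs the identical factorial cancellation when $(j)_q!\neq 0$. Your explicit remark that the piecewise convention forces both sides to vanish when $0<\ell<i<j$ is a worthwhile elaboration of what the paper leaves implicit, but it is not a different argument.
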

\begin{proof}
It is clear if $(j)_q!=0$.
	Otherwise,
	\begin{align*}
	{ j \choose \ell }_q { {j-\ell} \choose {i-\ell} }_q & =  \frac{(j)_q!}{(j-\ell)_q! \, (\ell)_q!} \, \frac{(j-\ell)_q!}{((j-\ell)-(i-\ell))_q! \, (i-\ell)_q!} \\
	& = \frac{(j)_q!}{(\ell)_q! \,(j-i)_q! \, (i-\ell)_q!} \\
	& = \frac{(j)_q!}{(i)_q! \,(j-i)_q!} \, \frac{(i)_q!}{(\ell)_q! \, (i-\ell)_q!} \\
	& = { j \choose i }_q { i \choose \ell }_q. \qedhere
	\end{align*}
\end{proof}

\begin{prop}\label{lda_alpha_eh_acao_parcial}
	For any $\alpha \in \k$, the linear map $\lda_\alpha: T_n(q) \longrightarrow \k$ given by $\lda_{\alpha}(g^{n-i}x^{j}) = q^{\frac{i(i+1)}{2}} {j \choose i}_q  (-1)^i \alpha^j,$ for $ i,j \in \I_{0,n-1}$, is a partial action of $T_n(q)$ on $\k$.
\end{prop}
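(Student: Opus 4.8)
The plan is to apply the characterization of partial actions on the base field from Lemma~\ref{eqn_lda}. Since
$$\lda_\alpha(1_{T_n(q)})=\lda_\alpha(g^{n-0}x^{0})=q^{0}{0 \choose 0}_q(-1)^{0}\alpha^{0}=1_\k$$
is immediate, it remains only to verify the functional equation \eqref{eq}, namely $\lda_\alpha(h)\lda_\alpha(y)=\lda_\alpha(h_1)\lda_\alpha(h_2 y)$. As both sides are bilinear, I would check it on the canonical basis, taking $h=g^{a}x^{b}$ and $y=g^{c}x^{d}$ with $a,b,c,d\in\I_{0,n-1}$. The left-hand side is then the explicit product of two values of \eqref{def_lda_alpha}.

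For the right-hand side I would expand the comultiplication $\Delta(g^{a}x^{b})=\sum_{\ell=0}^{b}{b \choose \ell}_q g^{a+\ell}x^{b-\ell}\o g^{a}x^{\ell}$ and use the straightening relation $x^{\ell}g^{c}=q^{\ell c}g^{c}x^{\ell}$ to compute $g^{a}x^{\ell}\ap g^{c}x^{d}=q^{\ell c}g^{a+c}x^{\ell+d}$, obtaining
\[
\lda_\alpha(h_1)\lda_\alpha(h_2 y)=\sum_{\ell=0}^{b}{b \choose \ell}_q\, q^{\ell c}\,\lda_\alpha(g^{a+\ell}x^{b-\ell})\,\lda_\alpha(g^{a+c}x^{\ell+d}).
\]
A first useful remark is that in each summand the two evaluations together carry exactly the factor $\alpha^{(b-\ell)+(\ell+d)}=\alpha^{b+d}$, which is also the power of $\alpha$ on the left; hence these powers cancel and the problem becomes a purely $q$-binomial identity. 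Writing $i_a=(n-a)\bmod n$, $i_c=(n-c)\bmod n$ and $i'=(n-a-c)\bmod n$, the relation $g^{n}=1$ forces $\lda_\alpha(g^{a+\ell}x^{b-\ell})$ to vanish outside $0\le\ell\le i_a$ (and both sides vanish when $i_a>b$), while $x^{n}=0$ makes $\lda_\alpha(g^{a+c}x^{\ell+d})$ vanish whenever $\ell+d\ge n$.

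With the range controlled, I would apply Lemma~\ref{prodqbinom} in the form ${b \choose \ell}_q{b-\ell \choose i_a-\ell}_q={b \choose i_a}_q{i_a \choose \ell}_q$ to factor out ${b \choose i_a}_q$ (the $h$-part of the left-hand side), and, after collecting the quadratic $q$-exponents via $\tfrac{(i_a-\ell)(i_a-\ell+1)}{2}=\tfrac{i_a(i_a+1)}{2}-i_a\ell+\tfrac{\ell(\ell-1)}{2}$, recognize the residual sum as an instance of Lemma~\ref{lema1} with $i=i_a$, $t=d$, $k=i'-i_a$. When $a+c\ge n+1$ one has $i'=i_a+i_c$ and $k=i_c\ge 0$, so Lemma~\ref{lema1} collapses the sum to ${d \choose i_c}_q$, the $y$-part of the left-hand side; moreover in this regime the terms that $x^{n}=0$ would discard already carry a vanishing $q$-binomial coefficient (using that $q$ is a primitive $n$-th root of unity, so the $q$-Lucas reduction of ${\ell+d \choose i'}_q$ is zero there), whence truncated and untruncated sums agree, and a direct check shows the signs and $q$-powers match the left-hand side exactly.

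The main obstacle is the complementary wrap-around case $a+c\le n$: there $i'=i_a+i_c-n$, so $k=i_c-n<0$ and Lemma~\ref{lema1} reports $0$ for the untruncated sum, whereas the left-hand side need not vanish. In this case the relation $x^{n}=0$ genuinely contributes, so I would not discard those terms but instead rewrite them through $q$-Lucas as ${\ell+d \choose i'}_q={\ell+d-n \choose i'}_q$ for $\ell+d\ge n$ and reindex $\ell\mapsto\ell-(n-d)$ to recast the retained portion as a \emph{shifted} instance of Lemma~\ref{lema1} that evaluates to the required ${d \choose i_c}_q$ up to the appropriate sign and $q^{n}=1$ bookkeeping. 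Reconciling this wrapped regime with the generic one, i.e.\ showing that the truncation imposed by $x^{n}=0$ is exactly what repairs the apparent vanishing forced by the negative lower index, is where I expect the real difficulty to lie; once it is settled, the two sides of \eqref{eq} coincide and the proof is complete.
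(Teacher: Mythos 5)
Your strategy is the same as the paper's: verify \eqref{eq} on the canonical basis, truncate the sum to $\ell\le i_a$ using the vanishing of $\lda_\alpha$, factor with Lemma~\ref{prodqbinom}, and conclude with Lemma~\ref{lema1}. In your generic regime $a+c\ge n+1$ the argument is complete, and in fact more scrupulous than the paper's write-up, since you check that the terms suppressed by $x^n=0$ carry vanishing $q$-binomials. The problem is that the wrap-around case is, by your own admission, not proved, and this is a genuine gap rather than a loose end. The case is neither empty nor degenerate: already in $T_2(-1)=\mathbb{H}_4$ with $h=y=gx$ one has $a=c=1$, $a+c=n$, both sides of \eqref{eq} equal $\alpha^2\neq 0$, and the single surviving term of the right-hand side, $\lda_\alpha(gx)\lda_\alpha(x)$ at $\ell=0$, survives precisely because of the truncation $x^n=0$. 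Your proposed repair is only a gesture: Lemma~\ref{lema1} is stated and proved for $k\in\N_0$, so with $k=i_c-n<0$ it asserts nothing, and the ``shifted instance'' you would obtain after the $q$-Lucas rewrite and the reindexing $\ell\mapsto\ell-(n-d)$ is a new identity that is written down and proved nowhere, neither in your proposal nor in the paper. There is also a slip in the case division itself: if $a=0$ or $c=0$ then $i'=i_a+i_c$ and the relevant parameter is $i_c\ge 0$ even though $a+c\le n$, so the correct dichotomy is $i_a+i_c\le n-1$ versus $i_a+i_c\ge n$, not $a+c\ge n+1$ versus $a+c\le n$.

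For comparison, the paper's proof has no case division at this point because it never reduces the group exponent modulo $n$: writing $h=g^{n-i}x^j$ and $y=g^{n-k}x^t$, it substitutes \eqref{def_lda_alpha} formally with top index $i+k$ and degree $\ell+t$ even when these exceed $n-1$, so that the entire verification becomes a single instance of Lemma~\ref{lema1} with the original $k\in\I_{0,n-1}$. (That substitution is itself not term-by-term faithful in the wrap regime --- in the $\mathbb{H}_4$ example the formal sum collects $\alpha^2$ from $\ell=1$ and $0$ from $\ell=0$, while the true sum does the reverse --- so the paper's phrase ``using the definition of $\lda_\alpha$'' passes over exactly the difficulty you isolated; only the totals agree.) Consequently, the economical way to complete your argument is not to evaluate the reduced-index, truncated sum directly, but to prove that it coincides with the untruncated sum carrying the unreduced index $i+k$ --- this is where $q$-Lucas does real work, in both directions --- and then quote Lemma~\ref{lema1} once, with $k\ge0$, exactly as the paper does.
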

\begin{proof}
	It is enough to check condition \eqref{eq} considering $h=g^{n-i}x^{j}$ and $y=g^{n-k}x^{t}$, for $i,j,k,t \in \I_{0,n-1}$, that is, if the following equality holds:
	\begin{align*}
	\lda_\alpha(g^{n-i}x^{j})\lda_\alpha(g^{n-k}x^{t}) = \sum_{\ell=0}^{j} {j \choose \ell}_q q^{-k \ell} \lda_\alpha(g^{n-(i-\ell)} x^{j-\ell})\lda_\alpha(g^{n-(i+k)} x^{\ell+t}).
	\end{align*}
	
	If $i >j$, then the above equality holds, since $\lda_\alpha(g^{n-i}x^{j}) = 0$ and also $ \lda_\alpha(g^{n-(i-\ell)} x^{j-\ell})=0$, for all $\ell \in \I_{0,j}$.
	
	Now we assume $i \leq j$.
	Note that for $\ell \in \I_{i+1,j}$, we have $0 < n-(\ell-i) < n$ and then $\lda_\alpha(g^{n-(i-\ell)} x^{j-\ell}) = \lda_\alpha(g^{n-(n-(\ell-i))} x^{j-\ell}) = 0$, since  $n - (\ell-i) > (j-\ell)$.
	Thus, condition \eqref{eq} can be rewritten as
	\begin{align*}
	\lda_\alpha(g^{n-i}x^{j})\lda_\alpha(g^{n-k}x^{t}) = \sum_{\ell=0}^{i} {j \choose \ell}_q q^{-k \ell} \lda_\alpha(g^{n-(i-\ell)} x^{j-\ell})\lda_\alpha(g^{n-(i+k)} x^{\ell+t}).
	\end{align*}
	Using the definition of $\lda_\alpha$ in the above equality we obtain
	\begin{align*}
	&(-1)^iq^{\frac{i(i+1)}{2}} {j \choose i}_q \alpha^j (-1)^k q^{\frac{k(k+1)}{2}} {t \choose k}_q \alpha^{t} \\
	= & \sum_{\ell=0}^{i} {j \choose \ell}_qq^{-k\ell} (-1)^{i-\ell} q^{\frac{(i-\ell)(i-\ell+1)}{2}} { j-\ell \choose i-\ell}_q  \alpha^{j-\ell} \\
	\times & (-1)^{i+k} q^{\frac{(i+k)(i+k+1)}{2}} {{\ell+t} \choose {i+k}}_q \alpha^{\ell+t},
	\end{align*}
	and then using Lemma \ref{prodqbinom} and simplifying, we get
	\begin{align*}
	\alpha^{j+t} {t \choose k}_q = & \ \alpha^{j+t} \sum_{\ell=0}^{i} {i \choose \ell}_q {{\ell+t} \choose {i+k}}_q (-1)^{i-\ell} q^{k(i-\ell)} q^{\frac{(i-\ell)(i-\ell+1)}{2}}.
	\end{align*}
	Since ${i \choose \ell}_q ={i \choose {i-\ell}}_q$ for all $\ell \in \I_{0,i}$, we rewrite the above equality as
	\begin{align*}
	\alpha^{j+t} {t \choose k}_q = \alpha^{j+t} \sum_{s=0}^{i} {i \choose {i-s}}_q {{i+t-s} \choose {i+k}}_q (-1)^{s} q^{ks} q^{\frac{s(s+1)}{2}},
	\end{align*}
	which is true by Lemma \ref{lema1}.
	
	Therefore, $\lda_\alpha$ is a partial action of $T_n(q)$ on $\k$.
\end{proof}

\begin{prop}\label{lda_N_zero_eh_acao_parcial}
	For each subgroup $N$ of $C_n$, consider the linear map $\lda_N^0: T_n(q) \longrightarrow \k$ given by $\lda_N^0(g^i x^j) = \delta_{j,0} \lda_N (g^i)$, $ i,j \in \I_{0,n-1}$.
	Then, $\lda_N^0$ is a partial action of $T_n(q)$ on $\k$.
\end{prop}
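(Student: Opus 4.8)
The plan is to recognize $\lda_N^0$ as the pullback of the group-algebra partial action $\lda_N$ along the canonical surjection $\pi \colon T_n(q) \longrightarrow \k C_n$ determined by $\pi(g) = g$ and $\pi(x) = 0$, and then to establish the general principle that precomposing a partial action with a bialgebra morphism again yields a partial action. Concretely, the defining relations are respected under $\pi$ (one has $g^n \mapsto 1$, $x^n \mapsto 0$ and $xg = qgx \mapsto 0 = q\,0$), so $\pi$ is a well-defined algebra map; moreover $(\pi \o \pi)\Delta(x) = 0 = \Delta(\pi(x))$ together with $\varepsilon(\pi(x)) = 0 = \varepsilon(x)$ makes $\pi$ a coalgebra map, hence a bialgebra (indeed Hopf) morphism. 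Since $\pi(g^i x^j) = \delta_{j,0}\, g^i$, the identity $\lda_N^0 = \lda_N \circ \pi$ then holds by comparing values on the canonical basis.

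First I would record the elementary fact that if $f \colon H \to K$ is a bialgebra morphism and $\mu \colon K \to \k$ is a partial action of $K$ on $\k$ (in the sense of Lemma \ref{k_mod_alg_parc}), then $\mu \circ f$ is a partial action of $H$ on $\k$. This is immediate from Lemma \ref{k_mod_alg_parc}: indeed $(\mu f)(1_H) = \mu(1_K) = 1_\k$, and for $h,y \in H$,
\begin{align*}
(\mu f)(h)(\mu f)(y) &= \mu(f(h))\mu(f(y)) = \mu(f(h)_1)\mu(f(h)_2\, f(y)) \\
&= \mu(f(h_1))\mu(f(h_2\, y)) = (\mu f)(h_1)(\mu f)(h_2\, y),
\end{align*}
where the first step of the second line uses that $f$ is a coalgebra map (so that $f(h)_1 \o f(h)_2 = f(h_1)\o f(h_2)$), the equality $f(h_2)f(y) = f(h_2\, y)$ uses that $f$ is an algebra map, and \eqref{eq} for $\mu$ supplies the middle identity. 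Applying this with $H = T_n(q)$, $K = \k C_n$, $f = \pi$ and $\mu = \lda_N$ --- the latter being a partial action by Example \ref{lda_kg} --- yields that $\lda_N^0 = \lda_N \circ \pi$ is a partial action of $T_n(q)$ on $\k$, as desired.

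I expect the only genuinely checkable point to be verifying that $\pi$ is a bialgebra morphism; everything else is formal. As a self-contained alternative avoiding $\pi$, one can verify \eqref{eq} directly on basis elements $h = g^a x^b$, $y = g^c x^d$: using the comultiplication formula $\Delta(g^a x^b) = \sum_{\ell=0}^b {b \choose \ell}_q\, g^{a+\ell}x^{b-\ell}\o g^a x^\ell$ together with the fact that $\lda_N^0$ vanishes on $g^i x^j$ whenever $j>0$, only the term $\ell = b$ survives on the right-hand side, and after reducing $g^a x^b g^c x^d = q^{bc}\, g^{a+c}\, x^{b+d}$ one finds that both sides vanish unless $b = d = 0$, in which case the identity collapses to $\lda_N(g^a)\lda_N(g^c) = \lda_N(g^a)\lda_N(g^{a+c})$ --- which is exactly \eqref{eq} for the group-algebra partial action $\lda_N$. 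The main bookkeeping obstacle along this direct route is tracking the $q$-powers arising from the commutation $x^b g^c = q^{bc}\, g^c x^b$ and the reductions of $g$-exponents modulo $n$, all of which the conceptual argument sidesteps entirely.
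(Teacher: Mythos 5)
Your proof is correct, but it takes a genuinely different route from the paper. You factor $\lda_N^0 = \lda_N \circ \pi$ through the Hopf projection $\pi \colon T_n(q) \to \k C_n$ (the quotient by the Hopf ideal generated by $x$) and invoke the general principle that partial actions on $\k$ pull back along bialgebra morphisms; both ingredients check out --- $\pi$ respects the defining relations and is a coalgebra map on generators (which suffices, since $(\pi\o\pi)\Delta$ and $\Delta\pi$ are algebra maps agreeing on $g$ and $x$), and the pullback computation via Lemma \ref{k_mod_alg_parc} is exactly as you wrote it. The paper instead verifies \eqref{eq} directly on the canonical basis: it disposes of $N=C_n$ and $N=\{1\}$ by identifying $\lda_{C_n}^0=\varepsilon$ and $\lda_{\{1\}}^0=\lda_0$, then for non-trivial $N=\langle g^k\rangle$ splits into the cases $h$ group-like and $h=g^ix^j$ with $j\geq 1$, using that $\lda_N^0$ kills every basis element with a positive power of $x$. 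Your argument is shorter, avoids all $q$-binomial and case bookkeeping, and yields a reusable statement that properly generalizes the paper's Remark \ref{restricoes} (restriction along an inclusion) to arbitrary bialgebra morphisms --- in particular surjections, which is what is needed here. What it does not replace is the computational technique itself: the companion family $\lda_\alpha$ of Proposition \ref{lda_alpha_eh_acao_parcial} cannot be obtained by any such pullback (it is nonzero on $x$), so the paper's direct method is still essential elsewhere, and your self-contained alternative check at the end (only $\ell=b$ survives, both sides vanish unless $b=d=0$, reducing to \eqref{eq} for $\lda_N$) is also sound and is in fact a cleaner, more uniform version of the paper's case analysis.
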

\begin{proof}
	By Lemma \ref{eqn_lda}, we need to verify condition \eqref{eq}, \emph{ i.e.}, if
	\begin{equation}\label{verific_N}
	\lda_N^0(h)\lda_N^0(y)=\lda_N^0(h_1)\lda_N^0(h_2y)
	\end{equation}
	 holds for all $h,y \in T_n(q)$.
	
	It is clear if $N=C_n$ or $N=\{1\}$, since $\lda_{C_n}^0=\varepsilon$ and $\lda_{\{1\}}^0=\lda_0$ (see Proposition \ref{lda_alpha_eh_acao_parcial}).
	
	Now suppose $N$ a non-trivial subgroup of $C_n$, that is, $\{1\} \neq N \neq C_n$.
	In this case, there exist integers $k, \ell \in \I_{2,n-1}$ such that $k \ell = n$ and $N= \langle g^k \rangle = \{ 1, g^k, g^{2k}, ..., g^{(\ell-1)k} \}$.
	Thus, $\lda(g^{ik}) = 1$ and $\lda(g^{ik+j})=0,$ for all $i \in \Z$ and $j \in \I_{k-1}$. 
	
	First, consider $h=g^{ik+j}$ with $i \in \Z$ and $j \in \I_{0,k-1}$. If $j \in \I_{k-1}$, then \eqref{verific_N} holds trivially.
	Otherwise $j=0$ and then \eqref{verific_N} results in
	$\lda_N^0(y)=\lda_N^0(g^{ik}y).$
	For $y=g^t x^s$, the previous equality results in $\delta_{s,0}\lda_N(g^t)=\delta_{s,0}\lda_N(g^{ik+t})$.
	Since $g^{ik} \in N$, it follows that $g^{ik+t} \in N$ if and only if $g^{t} \in N$, and so \eqref{verific_N} holds.
	
	Now, consider $h= g^ix^j$, with $i \in \Z$ and $j \in \I_{k-1}$.
	Then, \eqref{verific_N} becomes $$\lda_N^0(g^ix^j)\lda_N^0(y) = \sum_{\ell=0}^j {j \choose \ell}_q  \lda_N^0(g^{i+\ell}x^{j-\ell}) \lda_N^0(g^ix^\ell y),$$ for $y \in T_n(q)$.
	Since $\lda_N^0(g^ix^j)=\delta_{j,0} \lda_N (g^i)=0$ and $\lda_N^0(g^{i+\ell}x^{j-\ell})=\delta_{j-\ell,0} \lda_N (g^{i+\ell}) = 0$, for all $\ell \in \I_{0,j-1}$,  the previous equality results in 
	$$0 = \lda_N^0(g^{i+j}) \lda_N^0(g^ix^jy).$$ This latter equality holds because $ \lda_N^0(g^ix^jy)=0$, for all $y \in T_n(q)$.
	Indeed, for $y=g^kx^t$, $\lda_N^0(g^ix^jg^kx^t)= q^{jk}\lda_N^0(g^{i+k}x^{j+t})=q^{j k} \delta_{j+t,0}\lda_N(g^{i+k}) = 0$, since $t+j \geq 1$.
	
	Thus, the linear map $\lda_N^0$ as defined is a partial action of 	$T_n(q)$ on $\k$.
\end{proof}
 
\medbreak

Now we can prove Theorem \ref{taft}, announced at the beginning of this subsection.

\begin{proof}[Proof of Theorem \ref{taft}]\label{demonstracao_taft}
First, Propositions \ref{lda_alpha_eh_acao_parcial} and \ref{lda_N_zero_eh_acao_parcial} ensure that $\lda_\alpha$ and  $\lda_N^0$ are partial actions of $T_n(q)$ on $\k$.

Conversely, suppose $\lda: T_n(q) \longrightarrow \k $ a partial action.
Then, $\lda|_{\k G(T_n(q))} = \lda_N$, where $N =\{h \in G(T_n(q)) \, : \, \lda(h)=1_\k \}$ is a subgroup of $G(T_n(q))$.
	We will divide $N$ into three possibilities and deal each case separately: $N = G( T_n(q) )$, $\{1\} \neq N \neq G(T_n(q) )$ and $N = \{1\}$.
		
	\smallbreak
	
	\underline{\textbf{$1^{st}$ case: $N=G(T_n(q))$}}
	
	By assumption, we have $\lda(g^i) =1$, for all $ i \in \I_{0,n-1}$.
	Now, using Proposition \ref{propriedades_lda}, we can conclude that $\lda = \varepsilon$.
	Indeed, first $\lda(g^i x^j) = \lda(x^j)$, for all $i, j \in \I_{n-1}$, by item (i).
	Besides, it follows by item (ii) that $\lda(x)=0$ and then, by item (iii), $\lda(x^j)=0$ for all $ j \in \I_{n-1}$.
	
	\smallbreak

	\underline{\textbf{$2^{nd}$ case: $\{1\} \neq N \neq G(T_n(q) )$}}
	
	Since $N$ is a non-trivial subgroup of $G(T_n(q)) = C_n$,
	there exist integers $k, \ell \in \I_{2,n-1}$ such that $k \ell = n$ and $N= \langle g^k \rangle = \{ 1, g^k, g^{2k}, ..., g^{(\ell-1)k} \}$.
	In this case $\lda(g^{ik}) = 1$ and $\lda(g^{ik+j})=0,$ for all $i \in \Z$ and $j \in \I_{k-1}$. 
	
	First, note that  $\lda(g^ix)=0$ for all $i \in \Z$.
	Indeed, by Proposition \ref{propriedades_lda} (ii) we obtain $\lda(g^{j}x)=0$ for $j \in \I_{k-2}$.
	Since $\lda(g^{ik}y)=\lda(y)$ for all $y \in T_n(q)$, by Proposition \ref{propriedades_lda} (i),
	we only need to check that $\lda(x)=0$ and $\lda(g^{k-1}x)=0$.
	By Lemma \ref{eqn_lda}, the equality
	\begin{align}\label{0}
	\lda(g^{k-1}x) \lda(y) = \lda(g^{k-1}x)\lda(g^{k-1}y) + \lda(g^{k-1}xy)
	\end{align}
	holds for all $y \in T_n(q)$.
	For $y=g$ and $y=g^{k+1}$ in the above equation, we obtain $\lda(g^{k-1}x)= -q \lda(x)$ and $\lda(g^{k-1}x) = -q^{k+1} \lda(x)$, respectively.
	From these equalities $\lda(x)= \lda(g^{k-1}x)=0$, since $q$ is a primitive $n^{th}$ root of unity and $ k \in \I_{2,n-1}$. 

	Now we observe that \eqref{0} is reduced to $0 = \lda(g^{k-1}xy)$, for all $y \in T_n(q)$, and therefore $\lda(g^ix^j)=0$, for all $i,j \in \Z$, $j \geq 1$.
	
	Hence, $\lda = \lda_N^0$ as in \eqref{def_lda_N_zero}.
	
	\smallbreak
	
	\underline{\textbf{$3^{rd}$ case: $N=\{1\}$}}
	
	In this case, we have $\lda(1) = 1$ and $\lda(g^i)=0,$ for all $i \in \I_{n-1}$. 
	First, by Proposition \ref{propriedades_lda} (ii), we obtain $\lda(g^ix)=0$, for all $ i \in \I_{n-2}$.
	Thus, it remains to know $\lda(x)$, $\lda(g^{n-1}x)$ and $\lda(g^ix^j)$, for $ i \in \I_{0,n-1}$ and $j \in \I_{2,n-1}$.
	By Lemma \ref{eqn_lda}, the following equation holds for all $y \in T_n(q)$:
	\begin{align}\label{nova}
	\lda(g^{n-1}x)\lda(y)= \lda(g^{n-1}x)\lda(g^{n-1}y)+ \lda(g^{n-1}xy).
	\end{align}
	For $y=g$ in the above equation, we get $\lda(g^{n-1}x) = -q \lda(x)$, and then we rewrite \eqref{nova} as
	\begin{align}\label{prin}
	-q \lda(x)\lda(y)=-q \lda(x)\lda(g^{n-1}y)+ \lda(g^{n-1}xy).
	\end{align}
	The previous equality is sufficient to determine the values $\lda(g^ix^j)$, for all $i \in \I_{0,n-1}$ and $j \in \I_{2,n-1}$, in terms of $\lda(x)$.
	But, since $g^n=1$, we will determine $\lda(g^{n-i}x^j)$ instead of $\lda(g^ix^j)$.
	
	\underline{\textbf{Claim:}} The following equality holds for all $i,j \in \I_{0,n-1}$:
	\begin{align}\label{acaotaft}
	\lda(g^{n-i}x^{j}) = (-1)^i q^{\frac{i(i+1)}{2}} {j \choose i}_q \lda(x)^j.
	\end{align}
	We prove by induction on $j$. Notice that the above equality holds for $j=0,1$. Assume $j \geq 0$.
	For $y= g^{n-(i-1)}x^j$ in  \eqref{prin}, we have
	\begin{align*}
	-q \lda(x)\lda( g^{n-(i-1)}x^j)=-q \lda(x)\lda(g^{n-i}x^j)+ q^{1-i}\lda(g^{n-i}x^{j+1}),
	\end{align*}
	and then
	\begin{align*}
	\lda(g^{n-i}x^{j+1}) =q^i\lda(x)\left( \lda(g^{n-i}x^j)-\lda( g^{n-(i-1)}x^j)\right).
	\end{align*}
	Using the induction hypothesis, we obtain
	\begin{align*}
	\lda(g^{n-i}x^{j+1}) = & q^i\lda(x)\left( (-1)^i q^{\frac{i(i+1)}{2}} {j \choose i}_q -(-1)^{i-1} q^{\frac{(i-1)i}{2}} {j \choose {i-1}}_q \right )  \lda(x)^j,
	\end{align*}
	that is,
	\begin{align*}
	\lda(g^{n-i}x^{j+1}) = & (-1)^iq^{\frac{i(i+1)}{2}} \lda(x)^{j+1} \left(  q^i {j \choose i}_q + {j \choose {i-1}}_q \right ).
	\end{align*}
	Thus, with \eqref{6.5}
	we conclude the induction step and consequently the proof of the claim.
	
		Therefore, $\lda = \lda_\alpha$ as in \eqref{def_lda_alpha}, where $\alpha = \lda(x)$.
\end{proof}

Let ${p_1}^{\gamma_1}{p_2}^{\gamma_2} \cdots {p_k}^{\gamma_k}$ be the prime factorization of the integer $n$.
Since $C_n$ has $(\gamma_1+1)(\gamma_2+1)\cdots (\gamma_k+1)$ subgroups and each subgroup $N$ of $C_n$ defines a family of partial actions of $T_n(q)$ on $\k$ (usually containing only one element, except for $N=\{1\}$), we obtain the following corollary.
\begin{cor}
	If ${p_1}^{\gamma_1}{p_2}^{\gamma_2} \cdots {p_k}^{\gamma_k}$ is the prime factorization of the integer $n$, then the Taft algebra $T_n(q)$ has $(\gamma_1+1)(\gamma_2+1)\cdots (\gamma_k+1)$ families of partial actions on $\k$.
\end{cor}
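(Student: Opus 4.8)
The statement is an immediate consequence of Theorem \ref{taft} combined with the classical enumeration of the subgroups of a cyclic group, so the plan is to regroup the partial actions produced by Theorem \ref{taft} into families indexed by the subgroups of $G(T_n(q)) = C_n$, verify that distinct subgroups index genuinely distinct families, and then count the subgroups. First I would recall that, by Theorem \ref{taft}, every partial action of $T_n(q)$ on $\k$ is $\varepsilon$, some $\lambda_N^0$ with $N$ a non-trivial subgroup of $C_n$, or some $\lambda_\alpha$ with $\alpha \in \k$. Invoking the identifications $\varepsilon = \lambda_{C_n}^0$ and $\lambda_0 = \lambda_{\{1\}}^0$ recorded after \eqref{def_lda_N_zero}, I would attach to each subgroup $N \leq C_n$ a family: to the trivial subgroup $\{1\}$ the $\k$-parameterized family $\{\lambda_\alpha : \alpha \in \k\}$, and to every subgroup $N \neq \{1\}$ (including $C_n$ itself, for which $\lambda_{C_n}^0 = \varepsilon$) the singleton $\{\lambda_N^0\}$. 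This accounts for the full list in Theorem \ref{taft} and shows every partial action belongs to exactly one such family.

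Next I would check that these families are pairwise disjoint, so that the assignment $N \mapsto (\text{family})$ is injective. For this it suffices to restrict to the group-like part $\k C_n$: as noted before the statement of Theorem \ref{taft}, the restriction of any partial action $\lambda$ equals $\lambda_{N'}$ with $N' = \{h \in G(T_n(q)) : \lambda(h) = 1_\k\}$ a subgroup, and $\lambda_{N'}$ recovers $N'$. Hence distinct subgroups $N \neq \{1\}$ yield distinct maps $\lambda_N^0$. Moreover, setting $j = 0$ in \eqref{def_lda_alpha} gives $\lambda_\alpha(g^{n-i}) = q^{i(i+1)/2} {0 \choose i}_q (-1)^i = \delta_{i,0}$, so each $\lambda_\alpha$ restricts to $\lambda_{\{1\}}$ and thus lies in the family of the trivial subgroup, which is therefore disjoint from all the singleton families. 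Consequently the families of partial actions are in bijection with the subgroups of $C_n$.

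Finally I would count the subgroups of $C_n$. Being cyclic, $C_n$ has exactly one subgroup of order $d$ for each divisor $d$ of $n$, so the number of subgroups equals the number of positive divisors of $n$; writing $n = {p_1}^{\gamma_1} \cdots {p_k}^{\gamma_k}$, this equals $(\gamma_1+1)\cdots(\gamma_k+1)$, which is the asserted number of families. I do not expect any real obstacle: the argument is essentially bookkeeping on top of Theorem \ref{taft}, and the only point deserving care is the disjointness of the families, which is settled by the restriction to $\k C_n$ described above.
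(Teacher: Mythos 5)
Your proposal is correct and follows essentially the same route as the paper, which derives the corollary directly from Theorem \ref{taft} by attaching one family to each subgroup of $C_n$ (the $\k$-parameterized family $\{\lda_\alpha\}$ to the trivial subgroup, a singleton $\{\lda_N^0\}$ to every other subgroup, with $\lda_{C_n}^0 = \varepsilon$) and then counting the subgroups of a cyclic group by the divisors of $n$. Your additional verification that the families are pairwise disjoint, via restriction to $\k G(T_n(q))$, is a detail the paper leaves implicit but is entirely consistent with its argument.
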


In the next result, the partial action $\lda_\alpha$ of $T_n(q)$ on $\k$, $\alpha \in \k$, is computed for some elements.

\begin{cor} Consider the partial action $\lda_\alpha : T_n(q) \rightarrow \k$, $\alpha \in \k$. Then
	\begin{enumerate}
		\item[$(i)$] $\lda_\alpha(x^j) = \alpha^j$;
		\item[$(ii)$] $\lda_\alpha(g^{n-i}x^i)=(-1)^i q^{\frac{i(i+1)}{2}} \alpha^i$;
		\item[$(iii)$] $\lda_\alpha(g^{n-1}x^j)=-q\ (j)_q\ \alpha^j$;
		\item[$(iv)$] $\lda_\alpha(g^{i}x^{n-1}) = \alpha^{n-1}$,
	\end{enumerate}
hold for all $i,j \in \I_{0,n-1}$.
\end{cor}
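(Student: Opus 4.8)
The plan is to read off all four evaluations from the defining formula \eqref{def_lda_alpha}, $\lda_\alpha(g^{n-i}x^j)=q^{\frac{i(i+1)}{2}}{j \choose i}_q(-1)^i\alpha^j$. Items $(i)$--$(iii)$ are immediate specializations. For $(i)$ I would take $i=0$ (recalling $g^n=1$) and use ${j \choose 0}_q=1$; for $(ii)$ I would take $j=i$ and use ${i \choose i}_q=1$; for $(iii)$ I would take $i=1$ and use ${j \choose 1}_q=(j)_q$ together with $q^{\frac{1\cdot 2}{2}}=q$ and $(-1)^1=-1$. Each of these is a one-line substitution into \eqref{def_lda_alpha}.

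The only case requiring work is $(iv)$, since the element $g^ix^{n-1}$ is not literally in the normal form $g^{n-m}x^j$ of \eqref{def_lda_alpha}. First I would rewrite $g^i=g^{n-m}$, where $m\in\I_{0,n-1}$ is the residue of $-i$ modulo $n$ (so $m=0$ when $i=0$ and $m=n-i$ otherwise), using $g^n=1$. Applying \eqref{def_lda_alpha} with $j=n-1$ then reduces the claim to the scalar identity
\[
q^{\frac{m(m+1)}{2}}{{n-1} \choose m}_q(-1)^m=1,\qquad m\in\I_{0,n-1},
\]
and in particular shows that $\lda_\alpha(g^ix^{n-1})$ is independent of $i$, which is the content of $(iv)$.

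Hence the crux is the $q$-binomial evaluation ${{n-1} \choose m}_q=(-1)^m q^{-\frac{m(m+1)}{2}}$, which holds precisely because $q$ is a primitive $n^{th}$ root of unity; this hypothesis also guarantees $(n-1)_q!\neq 0$ and $(\ell)_q\neq 0$ for $\ell\in\I_{1,n-1}$, so the product expansion ${{n-1} \choose m}_q=\prod_{\ell=1}^{m}\frac{(n-\ell)_q}{(\ell)_q}$ involves no zero denominators. I would then observe that $q^n=1$ forces the reflection $(n-\ell)_q=-q^{-\ell}(\ell)_q$ for each $\ell$, so each factor collapses to $\frac{(n-\ell)_q}{(\ell)_q}=-q^{-\ell}$ and the product telescopes to $\prod_{\ell=1}^{m}(-q^{-\ell})=(-1)^m q^{-\frac{m(m+1)}{2}}$. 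Substituting this into the scalar identity yields $(-1)^{2m}=1$, completing $(iv)$. The main obstacle is exactly this identity, and its key ingredient is the reflection $(n-\ell)_q=-q^{-\ell}(\ell)_q$ extracted from the root-of-unity hypothesis; everything else is routine substitution.
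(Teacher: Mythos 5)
Your proposal is correct and follows the same (implicit) route as the paper: the corollary is stated there without proof, being a direct readout of the defining formula \eqref{def_lda_alpha}, which is exactly your treatment of items $(i)$--$(iii)$. Your handling of $(iv)$ --- rewriting $g^i=g^{n-m}$ and establishing ${{n-1} \choose m}_q=(-1)^m q^{-\frac{m(m+1)}{2}}$ via the reflection $(n-\ell)_q=-q^{-\ell}(\ell)_q$ valid at a primitive $n^{th}$ root of unity --- is also correct, and it supplies the one non-obvious step that the paper leaves to the reader.
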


\begin{exas}
Note that all partial actions of $T_2(-1)= \mathbb{H}_4$ on its base field are already given in Example \ref{lda_sweedler}.
So,  here we exhibit the partial actions of $T_3(q)$ and $T_4(\omega)$ on their base fields.

Since $G(T_3(q)) = C_3$, the partial actions of $T_3(q)$ on $\k$ are given by $\varepsilon$ (global action) and $\lda_\alpha$, for any $\alpha \in \k$, where $\lda_\alpha(g^i)=\delta_{i,0}$, $\lda_\alpha(x)=\alpha$, $\lda_\alpha(gx)=0$, $\lda_\alpha(g^2x)= - q \alpha$ and $\lda_\alpha(g^ix^2)=\alpha^2$, for all $i \in \I_{0,2}$.

For $T_4(\omega)$, we have the global action $\varepsilon$ and the genuine partial actions $\lda^0_{\{1, g^2\}}$ and $\lda_\beta$, for any $\beta \in \k$.
The partial action $\lda^0_{\{1, g^2\}}$, associated to the unique non-trivial subgroup $\{1, g^2\}$ of $G(T_4(\omega)) = C_4$, is given by $\lda^0_{\{1, g^2\}}(g^ix^j) = \delta_{j,0}\lda_{\{1, g^2\}}(g^i)$, for all $i,j \in \I_{0,3}$ and the partial action $\lda_\beta$ is defined as $\lda_\beta(g^i)=\delta_{i,0}$, $\lda_\beta(x)=\beta$, $\lda_\beta(gx)=\lda_\beta(g^2x)=0$, $\lda_\beta(g^3x)= - \omega \beta$, $\lda_\beta(x^2)=\beta^2$, $\lda_\beta(gx^2)=0$, $\lda_\beta(g^2x^2)=- \omega \beta^2$, $\lda_\beta(g^3x^2)= (1- \omega)\beta^2$, and $\lda_\beta(g^ix^3)=\beta^3$, for all $i \in \I_{0,3}$.
\end{exas}

\medbreak

For a partial action to define a partial representation, it must be symmetric \cite[Example 3.5]{Partial_representations}.
Our next goal is to verify that all partial actions of $T_n(q)$ on its base field are symmetric.
Recall that a partial action $\lda: H \longrightarrow \k$ is symmetric if satisfies the additional condition $\lda(h)\lda(k) = \lda(h_1k)\lda(h_2)$, for all $h,k \in H$ (see Lemma \ref{k_mod_alg_parc}).
For this purpose, we need the next result.

\begin{lema}\label{lema4}
	For $i, j, t, s \in \N_0$ and $q \in \k^{\times}$, the following equality holds
	\begin{align*}
q^{s(i-j)} \sum_{\ell=0}^{j} {j \choose \ell}_q {{j+t-\ell} \choose {i+s-\ell}}_q {\ell \choose i}_q (-1)^{i-\ell} q^{\frac{(i-\ell)(i-\ell+1)}{2}}=	{j \choose i}_q {t \choose s}_q.
	\end{align*}
\end{lema}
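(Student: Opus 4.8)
The plan is to prove Lemma~\ref{lema4} by reducing it to the already-established Lemma~\ref{lema1}, since the two identities are structurally very similar: both are alternating $q$-binomial sums with a quadratic exponent $q^{\frac{(i-\ell)(i-\ell+1)}{2}}$ that collapse to a single $q$-binomial coefficient. The key observation is that the summand in Lemma~\ref{lema4} contains the extra factor $\binom{\ell}{i}_q$, which forces $\ell \geq i$ for a nonzero contribution; combined with the summation range $\ell \in \I_{0,j}$ this means only terms with $i \le \ell \le j$ survive. My first step is therefore to reindex the sum by setting $s' = \ell - i$ (or equivalently $\ell = i + s'$), running $s'$ from $0$ to $j-i$, so that the quadratic exponent $\frac{(i-\ell)(i-\ell+1)}{2}$ becomes $\frac{s'(s'+1)}{2}$ and the sign $(-1)^{i-\ell}$ becomes $(-1)^{s'}$. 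This is exactly the shape of the left-hand side of Lemma~\ref{lema1}.

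The second step is to handle the three $q$-binomial factors under this reindexing. After substituting $\ell = i+s'$, I expect to apply Lemma~\ref{prodqbinom} to the product $\binom{j}{\ell}_q \binom{\ell}{i}_q$: by that lemma (with the roles of the indices chosen appropriately), this product can be rewritten as $\binom{j}{i}_q \binom{j-i}{\ell - i}_q = \binom{j}{i}_q \binom{j-i}{s'}_q$. This is the crucial algebraic simplification, because it pulls the target factor $\binom{j}{i}_q$ out of the sum entirely and leaves behind a binomial $\binom{j-i}{s'}_q$ in the summation index $s'$ — precisely the kind of ``outer'' binomial appearing in Lemma~\ref{lema1} with parameters $i \rightsquigarrow j-i$. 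The remaining middle factor $\binom{j+t-\ell}{i+s-\ell}_q = \binom{(j-i)+(i+t-i) - s'}{s - s' + (i+s) - (i+s)}_q$ must be matched up carefully against the factor $\binom{(j-i)+t'-s'}{(j-i)+k'}_q$ of Lemma~\ref{lema1}; I anticipate the correct choice of auxiliary parameters to be $t' = t - s$ and $k' = s - (j-i)$, chosen so that the top and bottom indices align.

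The third step is then purely bookkeeping: after factoring out $\binom{j}{i}_q$ and the power of $q$ coming from the $q^{s(i-j)}$ prefactor, I invoke Lemma~\ref{lema1} directly to evaluate the resulting sum as $\binom{t'}{k'}_q$, and verify that $\binom{j}{i}_q \cdot q^{(\text{prefactor})} \cdot \binom{t'}{k'}_q$ equals the desired right-hand side $\binom{j}{i}_q \binom{t}{s}_q$. The prefactor $q^{s(i-j)}$ and the exponent $q^{sk}$ hidden inside Lemma~\ref{lema1}'s summand (with $k \rightsquigarrow k'$) should conspire to produce $\binom{t}{s}_q$ on the nose, and checking that these $q$-powers cancel correctly is where the main care is required.

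The hard part will be getting the index matching exactly right, in particular tracking how the power $q^{s(i-j)}$ outside the sum interacts with the factor $q^{sk}$ (here $s$ plays two different roles — the fixed parameter $s$ in Lemma~\ref{lema4} versus the summation variable in Lemma~\ref{lema1}, so I will rename the summation index to avoid collision) that appears inside Lemma~\ref{lema1}, and confirming that Lemma~\ref{prodqbinom}'s hypothesis $0 \le \ell \le i \le j$ can be met after the reindexing (its indices will need to be applied as $0 \le i \le \ell \le j$, i.e.\ with $i$ and $\ell$ swapped relative to the lemma's stated variable names). I expect no genuine difficulty beyond this substitution algebra, since all the combinatorial content is already carried by the two preceding lemmas; the degenerate cases where $(j)_q! = 0$ or $i > j$ are handled exactly as in the proof of Proposition~\ref{lda_alpha_eh_acao_parcial}, where vanishing $q$-binomials make both sides zero.
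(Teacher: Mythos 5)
Your opening moves are sound and in fact coincide with the paper's: the factor ${\ell \choose i}_q$ kills every term with $\ell<i$, and Lemma \ref{prodqbinom} (applied with its indices in the roles $0\leq i\leq \ell\leq j$) rewrites ${j \choose \ell}_q{\ell \choose i}_q={j \choose i}_q{{j-i} \choose {\ell-i}}_q$, so it suffices to prove
\begin{align*}
q^{s(i-j)}\sum_{\ell=i}^{j} {{j-i} \choose {\ell-i}}_q {{j+t-\ell} \choose {i+s-\ell}}_q (-1)^{i-\ell} q^{\frac{(i-\ell)(i-\ell+1)}{2}} = {t \choose s}_q.
\end{align*}
The gap is in your third step: this reduced sum is \emph{not} an instance of Lemma \ref{lema1} over the same $q$, for any choice of auxiliary parameters. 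Writing $m=\ell-i$ and $J=j-i$, the summand is ${J \choose m}_q{{J+t-m} \choose {s-m}}_q(-1)^m q^{\frac{m(m-1)}{2}}$. Matching it against Lemma \ref{lema1}'s summand ${J \choose m}_q{{J+T-m} \choose {J+K}}_q(-1)^m q^{mK+\frac{m(m+1)}{2}}$ forces $T=t$ (to fix the $m$-dependent top index) and, via ${n \choose k}_q={n \choose {n-k}}_q$, forces $K=t-s$; your guessed parameters $t'=t-s$, $k'=s-(j-i)$ do not even align the binomials. With the forced values, corresponding summands still differ by the factor $q^{m(t-s)+\frac{m(m+1)}{2}-\frac{m(m-1)}{2}}=q^{m(t-s+1)}$, which depends on the summation index $m$, so no constant prefactor such as $q^{s(i-j)}$ can absorb it. Concretely, for $J=t=s=1$ your target sum is $q^{-1}\left[(1+q)-1\right]$ while Lemma \ref{lema1} (with $T=1$, $K=0$) gives $\left[(1+q)-q\right]$: both equal $1$, but the terms do not correspond, so the term-by-term substitution you describe is impossible.

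The idea that rescues your plan --- absent from the proposal --- is to apply Lemma \ref{lema1} with $q^{-1}$ in place of $q$ (legitimate, since that lemma holds for every element of $\k^{\times}$), taking $I=j-i$, $T=t$, $K=t-s$, and then to convert back using ${n \choose k}_{q^{-1}}=q^{-k(n-k)}{n \choose k}_q$; a short computation shows all exponents then match and the conversion factors assemble into exactly the prefactor $q^{s(i-j)}$, yielding the reduced identity above. The paper does not do this either: after the same reduction by Lemma \ref{prodqbinom}, it proves the reduced identity from scratch by a second induction on $j$ driven by the $q$-Pascal identity \eqref{6.5}. So your overall architecture (reduce via Lemma \ref{prodqbinom}, then quote earlier work) is salvageable and would in fact give a shorter proof than the paper's, but step three as you state it would fail, and the missing $q\mapsto q^{-1}$ substitution is precisely the content that must be supplied.
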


\begin{proof}
	We prove by induction on $j$.
	First, note that for $j \in \N_0$ fixed, if $i>j$ the desired equality holds since ${j \choose i}_q = 0$ and also ${\ell \choose i}_q = 0$, for all $\ell \in \I_{0,j}$.
	Hence we can assume $i \leq j$.
	Using Lemma \ref{prodqbinom} is sufficient to prove that
	\begin{align*}
	q^{s(i-j)}\sum_{\ell=i}^{j} {{j-i} \choose {\ell-i}}_q {{j+t-\ell} \choose {i+s-\ell}}_q (-1)^{i-\ell} q^{\frac{(i-\ell)(i-\ell+1)}{2}} = {t \choose s}_q,
	\end{align*}
	for all $j, i, t, s \in \N_0$ with $i \in \I_{0,j}$.
	If $j=0$, then it follows that $i=0$ and the equality holds.
	For the induction step,
	\begin{align*}
	& q^{s(i-j)}\sum_{\ell=i}^{j} {{j-i} \choose {\ell-i}}_q {{j+t-\ell} \choose {i+s-\ell}}_q \, (-1)^{i-\ell} q^{\frac{(i-\ell)(i-\ell+1)}{2}} \\
	= & \, q^{s(i-j)} \left[ {{j+t-i} \choose s }_q \right. + \sum_{\ell=i+1}^{j-1} {{j-i} \choose {\ell-i}}_q {{j+t-\ell} \choose {i+s-\ell}}_q \\
	\times & \left. (-1)^{i-\ell} q^{\frac{(i-\ell)(i-\ell+1)}{2}} + { t \choose {i+s-j}}_q (-1)^{i-j} q^{\frac{(i-j)(i-j+1)}{2}} \right] \\
	\stackrel{ \eqref{6.5}}{=} & q^{s(i-j)} \left[ {{j+t-i} \choose s }_q \right. \\
	+ & \sum_{\ell=i+1}^{j-1} \left( {{j-i-1} \choose {\ell-i-1}}_q + q^{\ell-i}{{j-i-1} \choose {\ell-i}}_q \right) \\
	\times & {{j+t-\ell} \choose {i+s-\ell}}_q (-1)^{i-\ell} q^{\frac{(i-\ell)(i-\ell+1)}{2}} \\
	+ & \left. { t \choose {i+s-j}}_q (-1)^{i-j} q^{\frac{(i-j)(i-j+1)}{2}} \right] \\
	= & \, q^{s(i-j)} \left[ {{j+t-i} \choose s }_q \right. \\
	+ & \sum_{\ell=i+1}^{j-1} {{j-i-1} \choose {\ell-i-1}}_q {{j+t-\ell} \choose {i+s-\ell}}_q (-1)^{i-\ell} q^{\frac{(i-\ell)(i-\ell+1)}{2}} \\
	+ & \sum_{\ell=i+1}^{j-1} q^{\ell-i}{{j-i-1} \choose {\ell-i}}_q {{j+t-\ell} \choose {i+s-\ell}}_q (-1)^{i-\ell} q^{\frac{(i-\ell)(i-\ell+1)}{2}} \\
	+ & \left. { t \choose {i+s-j}}_q (-1)^{i-j} q^{\frac{(i-j)(i-j+1)}{2}} \right] \\
	= & \, q^{s(i-j)} \left[ {{j+t-i} \choose s }_q - {{j+t-i-1} \choose {s-1}}_q  \right. \\
	+ & \sum_{\ell=i+2}^{j-1} {{j-i-1} \choose {\ell-i-1}}_q {{j+t-\ell} \choose {i+s-\ell}}_q (-1)^{i-\ell} q^{\frac{(i-\ell)(i-\ell+1)}{2}} \\
	+ & \sum_{\ell=i+1}^{j-2} q^{\ell-i}{{j-i-1} \choose {\ell-i}}_q {{j+t-\ell} \choose {i+s-\ell}}_q (-1)^{i-\ell} q^{\frac{(i-\ell)(i-\ell+1)}{2}} \\
	+ & \, q^{j-1-i} {{t+1} \choose {i+s-j+1}}_q (-1)^{i-j+1} q^{\frac{(i-j+1)(i-j+2)}{2}}  \\
	+ & \left. { t \choose {i+s-j}}_q (-1)^{i-j} q^{\frac{(i-j)(i-j+1)}{2}} \right] \\
	= & \, q^{s(i-j)} \left[ \left( {{j+t-i} \choose s }_q - {{j+t-i-1} \choose {s-1}}_q  \right) \right. \\
	+ & \sum_{\ell=i+1}^{j-2} {{j-i-1} \choose {\ell-i}}_q {{j+t-\ell-1} \choose {i+s-\ell-1}}_q (-1)^{i-\ell-1} q^{\frac{(i-\ell-1)(i-\ell)}{2}} \\
	+ & \sum_{\ell=i+1}^{j-2} {{j-i-1} \choose {\ell-i}}_q {{j+t-\ell} \choose {i+s-\ell}}_q (-1)^{i-\ell} q^{\frac{(i-\ell-1)(i-\ell)}{2}} \\
	+ & \left. \left( {{t+1} \choose {i+s-j+1}}_q - { t \choose {i+s-j}}_q  \right) (-1)^{i-j+1} q^{\frac{(i-j)(i-j+1)}{2}} \right] \\
	\stackrel{ \eqref{6.5}}{=}& \, q^{s(i-j)} \left[ q^s {{j+t-i-1} \choose s }_q  \right. \\
	+ & \sum_{\ell=i+1}^{j-2} {{j-i-1} \choose {\ell-i}}_q \left( {{j+t-\ell} \choose {i+s-\ell}}_q - {{j+t-\ell-1} \choose {i+s-\ell-1}}_q \right) \\
	\times & (-1)^{i-\ell} q^{\frac{(i-\ell-1)(i-\ell)}{2}}  \\
	+ & \left. q^{i+s-j+1}{{t} \choose {i+s-j+1}}_q  (-1)^{i-j+1} q^{\frac{(i-j)(i-j+1)}{2}} \right] \\
	\stackrel{ \eqref{6.5}}{=} & \, q^{s(i-j)} \left[ q^s {{j+t-i-1} \choose s }_q  \right. \\
	+ & \sum_{\ell=i+1}^{j-2} {{j-i-1} \choose {\ell-i}}_q q^{i+s-\ell}{{j+t-\ell-1} \choose {i+s-\ell}}_q (-1)^{i-\ell} q^{\frac{(i-\ell-1)(i-\ell)}{2}} \\
	+ & \left. \, q^s {{t} \choose {i+s-j+1}}_q  (-1)^{i-j+1}  q^{\frac{(i-j+1)(i-j+2)}{2}} \right] \\
	= & \, q^{s(i-j)} q^s \left[ {{j+t-i-1} \choose s }_q  \right. \\
	+ & \sum_{\ell=i+1}^{j-2} {{j-i-1} \choose {\ell-i}}_q {{j+t-\ell-1} \choose {i+s-\ell}}_q (-1)^{i-\ell} q^{\frac{(i-\ell)(i-\ell+1)}{2}} \\
	+ & \left. {{t} \choose {i+s-j+1}}_q  (-1)^{i-j+1} q^{\frac{(i-j+1)(i-j+2)}{2}} \right] \\
	= & \, q^{s(i-j+1)} \left[ \sum_{\ell=i}^{j-1} {{j-i-1} \choose {\ell-i}}_q {{j+t-\ell-1} \choose {i+s-\ell}}_q (-1)^{i-\ell} q^{\frac{(i-\ell)(i-\ell+1)}{2}} \right]. \qedhere
	\end{align*}
\end{proof}

\begin{teo}\label{simetria_taft}
	Every partial action of the Taft algebra $T_n(q)$ on $\k$ is symmetric.
\end{teo}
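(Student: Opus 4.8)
The plan is to combine Theorem \ref{taft} with the symmetry criterion of Lemma \ref{k_mod_alg_parc}: a partial action $\lda$ on $\k$ is symmetric precisely when it satisfies \eqref{eqn_lda_sim}, that is, $\lda(h)\lda(y) = \lda(h_1 y)\lda(h_2)$ for all $h, y \in T_n(q)$. By Theorem \ref{taft} it therefore suffices to verify \eqref{eqn_lda_sim} for the three types of partial actions of $T_n(q)$ on $\k$: the counit $\varepsilon$, the maps $\lda_N^0$ attached to subgroups $N$ of $G(T_n(q))$, and the maps $\lda_\alpha$ with $\alpha \in \k$. The first case is immediate, since $\varepsilon$ yields the global action and every $H$-module algebra is symmetric as a partial module algebra.

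For $\lda_N^0$, I would check \eqref{eqn_lda_sim} directly on basis elements $h = g^i x^j$ and $y = g^a x^b$, mimicking the argument of Proposition \ref{lda_N_zero_eh_acao_parcial}. Writing $\Delta(g^i x^j) = \sum_\ell {j \choose \ell}_q\, g^{i+\ell} x^{j-\ell} \o g^i x^\ell$ and using that $\lda_N^0(g^i x^\ell) = \delta_{\ell,0}\lda_N(g^i)$ annihilates every term with $\ell \neq 0$, the right-hand side collapses to $\lda_N^0(g^i x^j y)\lda_N(g^i)$. When $j \geq 1$ both sides vanish (the left by $\delta_{j,0}$, the right because $g^i x^j y$ always carries a positive power of $x$); when $j = 0$ the identity reduces to the subgroup relation $\lda_N(g^i)\lda_N(g^a) = \lda_N(g^{i+a})\lda_N(g^i)$, which holds exactly as in Example \ref{lda_kg}.

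The main case, and the principal obstacle, is $\lda_\alpha$; here Lemma \ref{lema4} is exactly what is needed, in perfect analogy with the role of Lemma \ref{lema1} in Proposition \ref{lda_alpha_eh_acao_parcial}. Taking $h = g^{n-i} x^j$ and $y = g^{n-k} x^t$, I would expand $\lda_\alpha(h_1 y)\lda_\alpha(h_2)$ via the comultiplication formula, then move $x^{j-\ell}$ past $g^{n-k}$ using $xg = qgx$ (which produces the factor $q^{-(j-\ell)k}$) and reduce the resulting power of $g$ modulo $g^n = 1$, so that $h_1 y$ becomes a scalar multiple of $g^{n-(i+k-\ell)} x^{j-\ell+t}$. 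Substituting the defining formula \eqref{def_lda_alpha} and applying Lemma \ref{prodqbinom} to the product ${j \choose \ell}_q {\ell \choose i}_q$, the identity \eqref{eqn_lda_sim} becomes, after cancelling the common factor $(-1)^{i+k} q^{\frac{i(i+1)}{2}+\frac{k(k+1)}{2}}\alpha^{j+t}$, exactly the statement of Lemma \ref{lema4} with $s = k$; in particular the prefactor $q^{s(i-j)}$ appearing there is precisely the power of $q$ that survives the simplification.

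The delicate bookkeeping to be carried out is twofold. First, one must identify which summands vanish: since $\lda_\alpha(g^{n-i} x^\ell) \neq 0$ forces $\ell \geq i$ (through ${\ell \choose i}_q$), the effective range of $\ell$ is confined, keeping the reduced exponent $i+k-\ell$ admissible so that the convention ${m \choose \ell}_q = 0$ for $\ell < 0$ or $\ell > m$ is compatible with the reduction modulo $n$. Second, one must track the accumulated powers of $q$ coming from the commutation relation together with the exponent $\tfrac{(i+k-\ell)(i+k-\ell+1)}{2}$, which expands as $\tfrac{(i-\ell)(i-\ell+1)}{2} + (i-\ell)k + \tfrac{k(k+1)}{2}$ and thereby regroups into exactly the shape required by Lemma \ref{lema4}. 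Once this reduction is in place, Lemma \ref{lema4} completes the verification and shows that $\lda_\alpha$, and hence every partial action of $T_n(q)$ on $\k$, is symmetric.
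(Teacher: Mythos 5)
Your proposal is correct and follows essentially the same route as the paper: reduce to the classification of Theorem \ref{taft}, dispose of $\varepsilon$ as a global action, collapse the coproduct for $\lda_N^0$ to the subgroup identity, and verify $\lda_\alpha$ by regrouping the $q$-exponents so that \eqref{eqn_lda_sim} becomes precisely Lemma \ref{lema4} with $s=k$. The only cosmetic difference is that you invoke Lemma \ref{prodqbinom} on ${j \choose \ell}_q {\ell \choose i}_q$ before matching Lemma \ref{lema4}, whereas the paper's computation lands on the statement of Lemma \ref{lema4} directly (that product already appears there; Lemma \ref{prodqbinom} is used inside its proof), which changes nothing of substance.
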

\begin{proof}
We need to check condition \eqref{eqn_lda_sim} for the partial actions $\lda_N^0$ and $\lda_\alpha$.
	Considering $\lda_\alpha$, $\alpha \in \k$,
		\begin{align*}
	\lda_\alpha(g^{n-i}x^{j}) \lda_\alpha(g^{n-s}x^{t}) = & \, q^{\frac{i(i+1)}{2}} {j \choose i}_q  (-1)^i \alpha^j q^{\frac{s(s+1)}{2}} {t \choose s}_q  (-1)^s \alpha^t \\
	= &  \, (-1)^{i+s} \alpha^{j+t} q^{\frac{i(i+1)}{2}+\frac{s(s+1)}{2}} {j \choose i}_q {t \choose s}_q,
	\end{align*}
	for all $i,j,s,t \in \I_{0,n-1}$. On the other hand,
		\begin{align*}
& \lda_\alpha((g^{n-i}x^{j})_1g^{n-s}x^{t}) \lda_\alpha((g^{n-i}x^{j})_2) \\
= & \sum_{\ell=0}^{j} { j \choose \ell }_q \lda_\alpha(g^{n-i+\ell}x^{j-\ell} g^{n-s}x^{t}) \lda_\alpha(g^{n-i}x^{\ell}) \\ 
= & \sum_{\ell=0}^{j} { j \choose \ell }_q q^{(j-\ell)(n-s)}\lda_\alpha(g^{n-(i-\ell+s)}x^{j+t-\ell}) \lda_\alpha(g^{n-i}x^{\ell}) \\
= & \sum_{\ell=0}^{j} { j \choose \ell }_q q^{s(\ell-j)}q^{\frac{(i+s-\ell)(i+s-\ell+1)}{2}} { {j+t-\ell} \choose {i+s-\ell} }_q  (-1)^{i+s-\ell} \alpha^{j+t-\ell} \lda_\alpha(g^{n-i}x^{\ell}) \\
= & \ q^{\frac{i(i+1)}{2}} \alpha^{j+t} \sum_{\ell=0}^{j} { j \choose \ell }_q { {j+t-\ell} \choose {i+s-\ell} }_q  {\ell \choose i}_q  (-1)^{s-\ell}  q^{s(\ell-j)} q^{\frac{(i+s-\ell)(i+s-\ell+1)}{2} }\\
= & \ q^{\frac{i(i+1)}{2}}  \alpha^{j+t} \sum_{\ell=0}^{j} { j \choose \ell }_q { {j+t-\ell} \choose {i+s-\ell} }_q  {\ell \choose i}_q  (-1)^{s-\ell}  q^{\frac{s(s+1)}{2}} q^{s(i-j)} q^{\frac{(i-\ell)(i-\ell+1)}{2}} \\
= & \ q^{\frac{i(i+1)}{2}} q^{\frac{s(s+1)}{2}} q^{s(i-j)} \alpha^{j+t} \sum_{\ell=0}^{j} { j \choose \ell }_q { {j+t-\ell} \choose {i+s-\ell} }_q  {\ell \choose i}_q  (-1)^{s-\ell}  q^{\frac{(i-\ell)(i-\ell+1)}{2}}.
	\end{align*}
	Thus, it follows that $\lda_\alpha$ is a symmetric partial action by Lemma \ref{lema4}.
	
	Now, consider the partial action $\lda_N^0$.
	Since $\lda_{C_n}^0 = \varepsilon$ and $\lda_{\{1\}}^0 = \lda_0$, we can assume $\{1\} \subsetneq N \subsetneq G(T_n(q))$.
	For all $i,j,s,t \in \I_{0,n-1}$,
		\begin{align*}
	\lda_N^0(g^{i}x^{j}) \lda_N^0(g^{s}x^{t}) =  \delta_{j,0}\delta_{t,0}\lda_N(g^i)\lda_N(g^s).
	\end{align*}
	On the other hand,
		\begin{align*}
	& \lda_N^0((g^{i}x^{j})_1 g^{s}x^{t})\lda_N^0((g^{i}x^{j})_2) \\
	= &  \sum_{\ell=0}^{j} { j \choose \ell }_q \lda_N^0 (g^{i+\ell}x^{j-\ell} g^{s}x^{t}) \lda_N^0(g^{i}x^{\ell}) \\
	= & \sum_{\ell=0}^{j} { j \choose \ell }_q q^{s(j-\ell)} \lda_N^0(g^{i+s+\ell}x^{j+t-\ell}) \lda_N^0(g^{i}x^{\ell}) \\
	= & \sum_{\ell=0}^{j} { j \choose \ell }_q q^{s(j-\ell)} \delta_{j+t-\ell,0}\lda_N(g^{i+s+\ell}) \delta_{\ell,0}\lda_N(g^{i}) \\
	= & \ q^{sj} \delta_{j+t,0}\lda_N(g^{i+s})\lda_N(g^{i}).
	\end{align*}
	Since $g^i$ is a group-like element, 
	\begin{align*}
	\lda_N(g^{i})\lda_N(g^s) \stackrel{ \eqref{eq}}{=}  \lda_N((g^{i})_1)\lda_N((g^{i})_2g^s)= 
	\lda_N(g^{i}) \lda_N(g^{i+s}).
	\end{align*}
	Moreover, for $j,t \in \I_{0,n-1}$, it follows that $j+t=0$ if and only if $j=t=0$, and then we obtain $\delta_{j,0}\delta_{t,0} = q^{sj} \delta_{j+t,0}$.
	Therefore, $\lda_N^0$ is also a symmetric partial action of $T_n(q)$ on $\k$.
\end{proof}

	From Lemma \ref{lema4}, we also obtain an interesting $q$-identity:
	\begin{align*}
	q^{-sj} \sum_{\ell=0}^{j} {j \choose \ell}_q {{j+t-\ell} \choose {s-\ell}}_q (-1)^{-\ell} q^{\frac{\ell(\ell-1)}{2}} = {t \choose s}_q,
	\end{align*}
for all $j, t, s \in \N_0$ and $q \in \k^{\times}$. 

\subsection{Partial Coactions}\label{Subsec:coacoes_taft}
In this subsection, since Taft algebras are self-dual Hopf algebras, all partial coactions of $T_n(q)$ on $\k$ are calculated from the partial actions.

\medbreak 

By Proposition \ref{dual_corpo}, the linear map $\lda: T_n(q) \longrightarrow \k $ defines a partial action of $T_n(q)$ on $\k$ if and only if the linear map $\rho_\lda: \k \longrightarrow \k \o (T_n(q))^*$, given by $\rho_\lda(1_\k) = 1_\k \o \lda $, defines a partial coaction of $(T_n(q))^*$ on $\k$.

Since all partial actions of $T_n(q)$ on $\k$ were classified in Theorem \ref{taft} and $T_n(q)$ is a self-dual Hopf algebra, all partial coactions of $T_n(q)$ on $\k$ are given by $\rho_{\varphi(\lda)}$, where $\varphi: (T_n(q))^* \longrightarrow T_n(q)$ is an isomorphism of Hopf algebras and $\lda$ is as in \eqref{def_lda_alpha} and \eqref{def_lda_N_zero}.
Moreover, Theorem \ref{simetria_taft} and Proposition \ref{dual_corpo} ensure that all partial coactions of $T_n(q)$ on $\k$ are symmetric.

However, we are interested in an explicit presentation for the idempotent elements in $T_n(q)$ that define the partial coactions of $T_n(q)$ on $\k$.
For this purpose, we present explicitly an isomorphism of Hopf algebras $\varphi : (T_n(q))^* \longrightarrow T_n(q)$.

\begin{lem}\label{inversa}
	Consider the Taft algebra $T_n(q)$ and the linear maps:
	\begin{itemize}
		\item[(1)] $\psi : T_n(q) \longrightarrow (T_n(q))^*$ given by
		$\psi(g^ix^j)=G^iX^j,$ where 
		\begin{align*}
		G^iX^j  = \sum_{k=0}^{n-1} (j)_q! q^{-i(k+j)-jk-\frac{j(j-1)}{2}} (g^kx^j)^*;
		\end{align*}
		
		\item[(2)] $\varphi : (T_n(q))^* \longrightarrow T_n(q)$ given by
		\begin{align*}
		\varphi ((g^ix^j)^*)=\frac{1}{n}((j)_q!)^{-1} q^{ij+\frac{j(j-1)}{2}} \sum_{k=0}^{n-1} q^{k(i+j)} g^kx^j. 
		\end{align*}
	\end{itemize}
Then, both maps defined in (1) and (2) are isomorphisms of Hopf algebras.
Furthermore, $\varphi = \psi^{-1}$.
\end{lem}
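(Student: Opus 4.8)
The plan is to prove that $\psi$ is a homomorphism of Hopf algebras, to check by a direct computation that $\varphi\circ\psi=\mathrm{id}_{T_n(q)}$, and then to deduce the remaining assertions from a dimension count together with the fact that the inverse of a Hopf algebra isomorphism is again one. Throughout I use that the product of $(T_n(q))^*$ is the convolution product $(f*h)(a)=f(a_1)h(a_2)$, with unit $\varepsilon$, and that its comultiplication and counit are dual to the product and unit of $T_n(q)$, namely $\Delta(f)(a\o b)=f(ab)$ and $f\mapsto f(1)$.

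First I would record the images of the generators: evaluating the defining formula of $\psi$ at $(i,j)=(1,0)$, $(0,1)$ and $(0,0)$ gives $G:=\psi(g)=\sum_{k=0}^{n-1}q^{-k}(g^k)^*$, $X:=\psi(x)=\sum_{k=0}^{n-1}q^{-k}(g^kx)^*$ and $\psi(1)=\sum_{k=0}^{n-1}(g^k)^*=\varepsilon$. Evaluating on the basis $\{g^ax^b\}$, one checks directly that $G$ is an algebra character of $T_n(q)$, hence a group-like element of $(T_n(q))^*$, and that $X$ is an $(\varepsilon,G)$-primitive element, i.e. $\Delta(X)=X\o\varepsilon+G\o X$.

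Next I would verify the Taft relations $G^{*n}=\varepsilon$, $X^{*n}=0$ and $X*G=qG*X$ in the convolution algebra. The first is immediate, since the group-like elements of $(T_n(q))^*$ are precisely the characters of $T_n(q)$; as $x$ is nilpotent and $g^n=1$ these form a cyclic group of order $n$ generated by $G$, whence $G^{*n}=\varepsilon$. The remaining two relations follow by evaluating the relevant convolution products on the basis, using $\Delta(g^ax^b)=\sum_{\ell}{b\choose \ell}_q\,g^{a+\ell}x^{b-\ell}\o g^ax^\ell$. By the presentation of $T_n(q)$ as the algebra generated by $g,x$ subject to these relations, there is then a unique algebra map $T_n(q)\to(T_n(q))^*$ with $g\mapsto G$ and $x\mapsto X$; to identify it with $\psi$ I must show that the convolution power $G^{*i}*X^{*j}$ equals the stated closed form $\sum_{k=0}^{n-1}(j)_q!\,q^{-i(k+j)-jk-\frac{j(j-1)}{2}}(g^kx^j)^*$. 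This evaluation of the convolution powers, in which the factor $(j)_q!$ and the precise $q$-exponent emerge, is the main computational obstacle and is exactly where the $q$-combinatorial identities developed earlier are used, in particular the $q$-Pascal identities \eqref{6.5} and \eqref{6.7}. Once this is done, $\psi$ is an algebra map carrying the group-like $g$ and the $(1,g)$-primitive $x$ to $G$ and $X$; since $\Delta$ and $\psi$ are both multiplicative, the set of elements on which $\psi$ is comultiplicative is a subalgebra, so it suffices to check compatibility with $\Delta$ and $\varepsilon$ on the generators $g,x$, which holds because $\psi(g)=G$ is group-like and $\psi(x)=X$ is $(\varepsilon,G)$-primitive with $\psi(1)=\varepsilon$. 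Hence $\psi$ is a bialgebra map and therefore a Hopf algebra map.

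Finally I would establish bijectivity and $\varphi=\psi^{-1}$ simultaneously. Applying $\varphi$ to $\psi(g^ix^j)$ and collecting the coefficient of $g^mx^j$, the factors $(j)_q!$ cancel and the $q$-exponents telescope, leaving $\frac1n\sum_{k=0}^{n-1}q^{(m-i)(k+j)}=\frac1n q^{(m-i)j}\sum_{k=0}^{n-1}q^{(m-i)k}$. Since $q$ is a primitive $n$-th root of unity, $\sum_{k=0}^{n-1}q^{(m-i)k}=n\,\delta_{i,m}$, so only $m=i$ survives and $\varphi(\psi(g^ix^j))=g^ix^j$, i.e. $\varphi\circ\psi=\mathrm{id}_{T_n(q)}$. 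Because $\dim_\k T_n(q)=\dim_\k(T_n(q))^*=n^2$, this forces $\psi$ to be bijective with inverse $\varphi$; and being the inverse of a Hopf algebra isomorphism, $\varphi$ is itself a Hopf algebra isomorphism.
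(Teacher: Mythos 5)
Your proposal is correct, but there is nothing in the paper to compare it against: the authors state Lemma \ref{inversa} with no proof at all, implicitly appealing to the classical fact that Taft algebras are self-dual, so your argument supplies a verification that the paper omits. Your route is sound at every step: $G=\psi(g)$ is an algebra character, hence group-like in $(T_n(q))^*$; $X=\psi(x)$ is $(\varepsilon,G)$-primitive; the Taft relations $G^{*n}=\varepsilon$, $X^{*n}=0$, $X*G=qG*X$ hold in the convolution algebra; the presentation of $T_n(q)$ then yields an algebra map $g\mapsto G$, $x\mapsto X$, which the closed form for $G^{*i}*X^{*j}$ identifies with $\psi$; the subalgebra argument upgrades $\psi$ to a bialgebra (hence Hopf algebra) map; and the orthogonality relation $\sum_{k=0}^{n-1}q^{(m-i)k}=n\,\delta_{i,m}$ gives $\varphi\circ\psi=\mathrm{id}$, which together with the dimension count $\dim_\k T_n(q)=\dim_\k (T_n(q))^*=n^2$ finishes the proof. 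One correction of emphasis: the step you single out as the main computational obstacle, and which you attribute to the $q$-Pascal identities \eqref{6.5} and \eqref{6.7}, in fact needs neither. Since $G^{*i}$ vanishes outside $x$-degree $0$ and $X$ vanishes outside $x$-degree $1$, in each convolution $(f*X)(g^ax^b)=\sum_{\ell}{b \choose \ell}_q f(g^{a+\ell}x^{b-\ell})X(g^ax^{\ell})$ exactly one summand survives (namely $\ell=1$, and $\ell=b$ when convolving with $G^{*i}$), so a one-line induction using only $(j+1)_q!=(j+1)_q\,(j)_q!$ gives $X^{*j}=\sum_{k=0}^{n-1}(j)_q!\,q^{-jk-\frac{j(j-1)}{2}}(g^kx^j)^*$ and then $G^{*i}*X^{*j}$ equals the stated formula; no $q$-binomial sums ever appear. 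With that computation written out, your proof is complete and self-contained, which is a genuine gain over the paper, since the paper later relies on the explicit formulas for $\varphi$ (to compute $\varphi(\lambda_N^0)$ and $\varphi(\lambda_\alpha)$) without ever having justified them.
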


The next lemma is useful to exhibit explicitly the idempotent element  $z=\varphi(\lda_N^0) \in T_n(q)$, which corresponds to the partial action of the Taft algebra $T_n(q)$ on $\k$ associated with a non-trivial subgroup $N$ of $C_n$, when it exists.

\begin{lema}\label{zdogrupo}
	Let $n \geq 2$ be an integer, $ t \in \I_{0,n-1}$, $q$ a primitive $n^{th}$ root of unity and $ C_n = \{ 1, g, g^2 , ..., g^{n-1}\} $.
	Then, 
	$$ \frac{1}{n} \sum_{t=0}^{n-1}  \left( \sum_{i=0}^{\ell-1}  q^{ikt} \right)  g^t = \frac{\ell}{n}\sum_{i=0}^{k-1}g^{i \ell},$$
	for all $k,\ell$ positive integers such that $n=k \ell$.
\end{lema}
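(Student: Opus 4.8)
The plan is to prove the identity by comparing the coefficient of each group-like element $g^t$, $t \in \I_{0,n-1}$, on both sides. On the left-hand side this coefficient is
\[
\frac{1}{n}\sum_{i=0}^{\ell-1} q^{ikt} = \frac{1}{n}\sum_{i=0}^{\ell-1} \bigl(q^{kt}\bigr)^{i},
\]
a finite geometric sum in the quantity $q^{kt}$, while on the right-hand side the coefficient of $g^t$ is $\ell/n$ exactly when $t$ appears among the exponents $0, \ell, 2\ell, \dots, (k-1)\ell$, and $0$ otherwise. So it suffices to show that the displayed sum equals $\ell$ when $\ell \mid t$ and $0$ when $\ell \nmid t$.

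The key observation is the order of $q^k$. Since $n = k\ell$ we have $k \mid n$, hence $\gcd(n,k) = k$ and the element $q^k$ has order $n/\gcd(n,k) = \ell$; that is, $q^k$ is a primitive $\ell^{\,th}$ root of unity. Consequently $q^{kt} = (q^k)^t = 1$ if and only if $\ell \mid t$. When $\ell \mid t$ every summand equals $1$ and the geometric sum is $\ell$. When $\ell \nmid t$ we have $q^{kt} \neq 1$, so I would use the closed form
\[
\sum_{i=0}^{\ell-1}\bigl(q^{kt}\bigr)^{i} = \frac{(q^{kt})^{\ell}-1}{q^{kt}-1},
\]
and note that the numerator vanishes because $(q^{kt})^{\ell} = q^{kt\ell} = q^{nt} = (q^n)^t = 1$. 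Thus the inner sum is $0$ in this case, and the coefficient of $g^t$ on the left-hand side is $\ell/n$ if $\ell \mid t$ and $0$ otherwise.

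It remains only to match this against the right-hand side, which is pure index bookkeeping: as $i$ runs over $\I_{0,k-1}$ the exponents $i\ell$ run through $0, \ell, \dots, (k-1)\ell = n-\ell$, which are precisely the multiples of $\ell$ lying in $\I_{0,n-1}$. Hence the coefficient of $g^t$ on the right is $\ell/n$ exactly when $\ell \mid t$ and $0$ otherwise, in agreement with the left-hand side, and the two expressions coincide term by term. I do not expect any serious obstacle here; the only points requiring care are the computation of the order of $q^k$ (which is where the hypothesis $n=k\ell$ and the primitivity of $q$ enter) and the verification that the multiples of $\ell$ in range are exactly the exponents appearing on the right.
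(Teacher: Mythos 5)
Your proof is correct and follows essentially the same route as the paper's: the paper writes $t=u+s\ell$ with $u\in\I_{0,\ell-1}$, $s\in\I_{0,k-1}$ and shows the inner sum equals $\ell$ when $u=0$ and vanishes when $u\neq 0$ because $q^{ku}\neq 1$ is an $\ell^{th}$ root of unity, which is exactly your dichotomy $\ell\mid t$ versus $\ell\nmid t$ together with the vanishing geometric sum. The only cosmetic difference is that you compare coefficients of $g^t$ directly and invoke the closed form $\sum_{i=0}^{\ell-1}r^{i}=\frac{r^{\ell}-1}{r-1}$, while the paper re-indexes the full sum; the substance is identical.
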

\begin{proof}
	First, note that $t \in \I_{0,n-1}$ may be uniquely written as $t=u+s\ell$, for $ u \in \I_{0,\ell-1}$ and $ s \in \I_{0,k-1}$.
	Then
	\begin{align*}
	\frac{1}{n} \sum_{t=0}^{n-1}  \left( \sum_{i=0}^{\ell-1}  q^{ikt} \right)  g^t & = \frac{1}{n} \sum_{s=0}^{k-1}\sum_{u=0}^{\ell-1}  \left( \sum_{i=0}^{\ell-1}  q^{ik(u+s\ell)} \right)  g^{u+s\ell} \\
	& = \frac{1}{n} \sum_{s=0}^{k-1} \left(  \left( \sum_{i=0}^{\ell-1}  q^{iks\ell}   \right) g^{s\ell} + \sum_{u=1}^{\ell-1} \left( \sum_{i=0}^{\ell-1}  q^{ik(u+s\ell)}  \right)  g^{u+s\ell}  \right).
	\end{align*}
	Since $q^{iks \ell} = q^{i s n} = 1$ and $q^{ku} \neq 1$ is a $\ell^{th}$ root of unity, it follows that
	\begin{align*}
	\sum_{i=0}^{\ell-1} q^{ik(u+s \ell)}  = \sum_{i=0}^{\ell-1} q^{iku + i s n} =\sum_{i=0}^{\ell-1} (q^{ku})^i =  0,
	\end{align*}
	for all $u \in \I_{\ell-1}$.
\end{proof}

\medbreak 

To state the partial coactions' theorem of $T_n(q)$ on $\k$, we define:
\begin{enumerate}
	\item for each $\alpha \in \k$, the idempotent element $z_\alpha \in T_n(q)$ as
	\begin{equation}\label{def_z_alpha}
z_\alpha = \frac{1}{n} \sum_{k=0}^{n-1}  g^k  + \frac{1}{n} \sum_{k=0}^{n-1} \sum_{j=1}^{n-1} q^{\frac{j(j-1)}{2}+kj} \alpha^j  \left( \sum_{i=0}^{j}  \frac{(-1)^i  q^{\frac{i(i+1)}{2}-i(j+k)} }{(j-i)_q! \, (i)_q!}  \right) g^kx^j.
	\end{equation}
	Note that $z_0 = \frac{1}{n} \sum_{k=0}^{n-1}  g^k   \in G(T_n(q))$.
	
	\smallbreak
	
	\item for each subgroup $N$ of $C_n = G(T_n(q))$, the idempotent element $z_N \in T_n(q)$ as
	\begin{equation}\label{def_z_N}
z_N = \frac{1}{|N|}\sum_{h \in N} h.
	\end{equation}
	In particular, $z_{\{1\}} =1$ and $z_{C_n} = z_0$  (as in \eqref{def_z_alpha}).
\end{enumerate}

\begin{teo}
	An element $z \in T_n(q)$ is a partial coaction of $T_n(q)$ on $\k$ if and only if $z=1$ (global coaction), $z = z_N$ or $z = z_\alpha$, where $\alpha \in \k$ and $N$ is a non-trivial subgroup of $G (T_n(q))$.
	Moreover, all such partial coactions are symmetric.
\end{teo}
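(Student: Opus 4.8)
The plan is to leverage the self-duality of $T_n(q)$ together with the duality between partial actions and partial coactions on $\k$, transporting the classification of Theorem \ref{taft} through the explicit Hopf isomorphism of Lemma \ref{inversa}. First I would record the structural principle: by Proposition \ref{dual_corpo} an element $\lda \in (T_n(q))^*$ is a (symmetric) partial coaction of $(T_n(q))^*$ on $\k$ precisely when $\lda$ is a (symmetric) partial action of $T_n(q)$ on $\k$, so by Theorem \ref{taft} these are exactly the elements $\varepsilon$, $\lda_N^0$ and $\lda_\alpha$ of $(T_n(q))^*$. Because the defining conditions of a (symmetric) partial coaction in Lemma \ref{k_comod_alg_parc}, namely idempotency, $\varepsilon(z)=1_\k$, \eqref{eqn_z} and \eqref{eqn_z_sim}, are expressed solely through the counit, the comultiplication and the multiplication, they are preserved by any Hopf algebra isomorphism. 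Hence $z \in T_n(q)$ is a (symmetric) partial coaction of $T_n(q)$ on $\k$ if and only if $\psi(z) = \varphi^{-1}(z) \in (T_n(q))^*$ is one, and so the partial coactions of $T_n(q)$ on $\k$ are exactly $\varphi(\varepsilon)$, $\varphi(\lda_N^0)$ and $\varphi(\lda_\alpha)$. This reduces the theorem to three explicit computations of $\varphi$.

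Next I would compute $\varphi(\varepsilon)$ and $\varphi(\lda_\alpha)$. Writing $\varepsilon = \sum_{i=0}^{n-1}(g^i)^*$ and applying the formula for $\varphi$ from Lemma \ref{inversa} termwise, the inner sum $\sum_{i=0}^{n-1} q^{ki}$ vanishes unless $k=0$ because $q$ is a primitive $n^{th}$ root of unity, giving $\varphi(\varepsilon)=1$, the global coaction. For $\lda_\alpha$ I would expand $\lda_\alpha = \sum_{i,j} \lda_\alpha(g^{n-i}x^j)(g^{n-i}x^j)^*$ using \eqref{def_lda_alpha}, apply $\varphi$, and simplify: using $q^n=1$ to turn $q^{(n-i)j}$ and $q^{k(n-i)}$ into $q^{-ij}$ and $q^{-ki}$, and using ${j \choose i}_q ((j)_q!)^{-1} = ((i)_q!\,(j-i)_q!)^{-1}$ (valid since $(j)_q!\neq 0$ for $j \leq n-1$), the coefficients collapse exactly to the closed form \eqref{def_z_alpha}, so $\varphi(\lda_\alpha)=z_\alpha$; in particular $\varphi(\lda_0)=z_0$.

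Then I would treat $\varphi(\lda_N^0)$ for a non-trivial subgroup $N = \langle g^k\rangle$ with $k\ell=n$. Starting from $\lda_N^0 = \sum_{h\in N}h^*$ in \eqref{lda_N_base_dual} and applying $\varphi$ to each group-like functional, the result is $\frac{1}{n}\sum_{t=0}^{n-1}\big(\sum_{i=0}^{\ell-1} q^{ikt}\big)g^t$, which by Lemma \ref{zdogrupo} equals $\frac{1}{k}\sum_{i=0}^{k-1} g^{i\ell} = z_{\langle g^\ell\rangle}$. Thus the image of $\lda_N^0$ is the idempotent attached to the subgroup $\langle g^\ell\rangle$ of order $k$. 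Since $k,\ell \in \I_{2,n-1}$ whenever $N$ is non-trivial and the assignment $\langle g^k\rangle \mapsto \langle g^\ell\rangle$ is an involutive bijection on the non-trivial subgroups of $C_n$, the family $\{\varphi(\lda_N^0)\}$ coincides with $\{z_N : N \text{ non-trivial}\}$ as defined in \eqref{def_z_N}. Combining the three computations yields the stated list $1$, $z_N$, $z_\alpha$.

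Finally, the symmetry assertion is immediate from the framework: by Theorem \ref{simetria_taft} each of $\varepsilon$, $\lda_N^0$, $\lda_\alpha$ is a symmetric partial action, hence a symmetric partial coaction of $(T_n(q))^*$ by Proposition \ref{dual_corpo}, and condition \eqref{eqn_z_sim} is preserved under $\varphi$; therefore every partial coaction of $T_n(q)$ on $\k$ is symmetric. I expect the main obstacle to be purely computational: matching the $q$-binomial and $q$-factorial bookkeeping in $\varphi(\lda_\alpha)$ to the precise closed form \eqref{def_z_alpha}, and correctly identifying the subgroup duality $N \leftrightarrow \langle g^\ell\rangle$ so that the transported family is exactly the claimed one rather than a reindexing of it. The conceptual transport of structure through self-duality is routine once Proposition \ref{dual_corpo} and Lemma \ref{inversa} are available.
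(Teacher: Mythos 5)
Your proposal is correct and follows essentially the same route as the paper: reduce via Proposition \ref{dual_corpo} and the explicit self-duality isomorphism $\varphi$ of Lemma \ref{inversa} to the classification in Theorem \ref{taft}, then compute $\varphi(\varepsilon)=1$, $\varphi(\lda^0_{\langle g^k\rangle})=z_{\langle g^\ell\rangle}$ via Lemma \ref{zdogrupo}, and $\varphi(\lda_\alpha)=z_\alpha$, with symmetry inherited from Theorem \ref{simetria_taft}. The only differences are cosmetic elaborations (e.g., your explicit remark that $\langle g^k\rangle \mapsto \langle g^\ell\rangle$ is an involutive bijection on non-trivial subgroups, which the paper leaves implicit).
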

\begin{proof}
	The considerations at the beginning of this subsection ensure that the partial coactions of $T_n(q)$ on $\k$ are given by $ \varphi(\lda)$, for $\lda$ a partial action of $T_n(q)$ on $\k$ as in Theorem \ref{taft} and $\varphi$ as in Lemma \ref{inversa}, and all such partial coactions are symmetric.
	
	From Theorem \ref{taft}, all partial actions of $T_n(q)$ on $\k$ are $\varepsilon, \lda_\alpha$ and $\lda_N^0$, where $\alpha \in \k$ and $N$ is a non-trivial subgroup of $G(T_n(q))$.
	Then, we will explicitly provide the idempotent elements $\varphi(\varepsilon), \varphi(\lda_\alpha), \varphi(\lda_N^0) \in T_n(q)$.
		
	Since $\varphi$ is an isomorphism of Hopf algebras, clearly $\varphi(\varepsilon)=1$.
	Now, suppose there exists a non-trivial subgroup  of $G(T_n(q))$, \emph{i.e.}, there exist positive integers $k, \ell \in \I_{2, n-1}$ such that $k \ell = n$.
	Consider the non-trivial subgroup $\langle g^k \rangle$ of $G(T_n(q))$ and its corresponding partial action
	 $\lda_{\langle g^k \rangle}^0 =\sum_{i=0}^{\ell-1} (g^{ik})^*$ (see \eqref{lda_N_base_dual}).
	 Then, 
	\begin{align*}
	\varphi\left(\lda_{\langle g^k \rangle}^0 \right) = & \ \varphi \left(\sum_{i=0}^{\ell-1} (g^{ik})^* \right) =  \sum_{i=0}^{\ell-1} \varphi((g^{ik})^*) 
	=  \sum_{i=0}^{\ell-1} \left( \frac{1}{n} \sum_{t=0}^{n-1} q^{ikt} g^t\right) \\ 
	= & \ \frac{1}{n} \sum_{i=0}^{\ell-1} \sum_{t=0}^{n-1} q^{ikt} g^t 
	=  \frac{1}{n} \sum_{t=0}^{n-1}  \left( \sum_{i=0}^{\ell-1}  q^{ikt} \right)  g^t 
	= \frac{\ell}{n}\sum_{i=0}^{k-1}g^{i\ell}, 
	\end{align*}
	where the last equality follows from Lemma \ref{zdogrupo}.
	Thus, $\varphi\left(\lda_{\langle g^k \rangle}^0 \right) = z_{\langle g^\ell \rangle}$.
	
	Now, for $\lda_{\alpha} = \sum_{0 \leq i \leq j \leq n-1} q^{\frac{i(i+1)}{2}} {j \choose i}_q  (-1)^i \alpha^j (g^{n-i}x^j)^*$, where $\alpha \in \k$,	 
	\begin{align*}
	\varphi ( \lda_\alpha)  = & \ \varphi \left( \sum_{0 \leq i \leq j \leq n-1} q^{\frac{i(i+1)}{2}} {j \choose i}_q  (-1)^i \alpha^j (g^{n-i}x^j)^* \right) \\
	= & \ \sum_{0 \leq i \leq j \leq n-1} q^{\frac{i(i+1)}{2}} {j \choose i}_q  (-1)^i \alpha^j \varphi( (g^{n-i}x^j)^* ) \\
	= & \ \sum_{0 \leq i \leq j \leq n-1} q^{\frac{i(i+1)}{2}} {j \choose i}_q  (-1)^i \alpha^j \left( \frac{ q^{(n-i)j+\frac{j(j-1)}{2}} }{n(j)_q!} \sum_{k=0}^{n-1} q^{k(n-i+j)} g^kx^j \right) \\
	= & \ \frac{1}{n} \sum_{j=0}^{n-1} \sum_{i=0}^{j}  \frac{(-1)^i \alpha^j q^{\frac{i(i+1)}{2}+\frac{j(j-1)}{2}-ij} }{(j-i)_q! \, (i)_q!} \left( \sum_{k=0}^{n-1} (q^{j-i})^k g^kx^j \right) \\
	= & \ \frac{1}{n} \sum_{k=0}^{n-1} \sum_{j=0}^{n-1} q^{\frac{j(j-1)}{2}+kj} \alpha^j \left( \sum_{i=0}^{j}  \frac{(-1)^i  q^{\frac{i(i+1)}{2}-i(j+k)} }{(j-i)_q! \, (i)_q!} \right) g^kx^j\\
	= & \ \frac{1}{n} \sum_{k=0}^{n-1} \left( g^k  + \sum_{j=1}^{n-1} q^{\frac{j(j-1)}{2}+kj} \alpha^j  \left( \sum_{i=0}^{j}  \frac{(-1)^i  q^{\frac{i(i+1)}{2}-i(j+k)} }{(j-i)_q! \, (i)_q!}  \right) g^kx^j \right) \\
	= & \ \frac{1}{n} \sum_{k=0}^{n-1}  g^k  + \frac{1}{n} \sum_{k=0}^{n-1} \sum_{j=1}^{n-1} q^{\frac{j(j-1)}{2}+kj} \alpha^j  \left( \sum_{i=0}^{j}  \frac{(-1)^i  q^{\frac{i(i+1)}{2}-i(j+k)} }{(j-i)_q! \, (i)_q!}  \right) g^kx^j.
	\end{align*}
	Therefore, $\varphi ( \lda_\alpha) = z_\alpha$ as defined in \eqref{def_z_alpha}.
\end{proof}

\begin{exas}
For Sweedler's $4$-dimensional Hopf algebra, $T_2(-1) = \mathbb{H}_{4}$, we have only the partial coactions $\rho_z$ given by $z=1$, which is the global one, and $z_\alpha = \frac{1 + g}{2} - \alpha gx,$
where $\alpha \in \k$.
Recall that such partial coactions were presented in 
Example \ref{z_sweedler}.

Now we present the partial coactions of $T_3(q)$ and $T_4(\omega)$ on their base fields, where $q$ and $\omega$ are primitive roots of unity of order $3$ and $4$, respectively.

Since $G(T_3(q)) = C_3$ has only the trivial subgroups, the partial coactions of $T_3(q)$ on $\k$ are $z=1$ (global coaction) and $z_\alpha$, for any $\alpha \in \k$, where
	$$z_\alpha = \frac{1 + g + g^2 }{3} + \frac{1}{3}\left( (q-1)\alpha gx + (q^2-1) \alpha g^2x - 3q \alpha^2gx^2 \right).$$
	
For $T_4(\omega)$, we have the global coaction $z=1$, the partial coaction $z_{\{1,g^2\}} = \frac{1+g^2}{2}$ associated to the unique non-trivial subgroup $\{1, g^2\}$ of $C_4$, and $z_\beta$, where
	\begin{align*}
	z_\beta  = & \ \frac{1}{4}\left( \right. 1 + g + g^2 + g^3 + \omega(1+\omega)\beta gx - 2 \beta g^2x - (1+\omega) \beta g^3x \\
	- & \ \left.  2 \omega \beta^2 gx^2  + 2 \beta^2 g^2x^2 + 2(1+\omega) \beta^3gx^3 \right),
	\end{align*}
	for any $\beta \in \k$.
\end{exas}

\section{Partial (Co)Actions of the Nichols Hopf Algebra $\mathbb{H}_{2^n}$ on $\k$}\label{sec:Nichols}

In this section, we reproduce ideas and techniques from previous section to calculate all partial (co)actions of the Nichols Hopf algebra $\mathbb{H}_{2^n}$ on its base field $\k$.

\medbreak

We emphasize that the so-called Nichols Hopf algebras were introduced by Taft in \cite{Taft}.
But, this family of Hopf algebras was named after the work of Nichols \cite{nichols}.
Such Hopf algebras are the prototype for the theory of \emph{Nichols algebras}.
We will present this family of Hopf algebras as it appears in \cite[Subsection 2.2]{etingof}.

\medbreak

Let $n \geq 2$ be an integer and suppose $char(\k) \neq 2$.
The \emph{Nichols Hopf algebra of order $n$}, or shortly \emph{Nichols Hopf algebra}, here denoted by $\mathbb{H}_{2^n}$, has the structure as follows:
as algebra it is generated over $\k$ by the $n$ letters $g,x_1, \cdots, x_{n-1}$ with relations $g^2=1$, $x_i^2=0$, $x_i g = -g x_i$ and $x_ix_j = -x_jx_i$, for all $i, j \in \I_{n-1}$. 
Thus, the set
 $\mathcal{B} = \{g^{j_0}x_1^{j_1}x_2^{j_2}...x_{n-1}^{ j_{n-1} }  : j_i \in \I_{0,1}, i \in \I_{0, n-1} \}$
 is the canonical basis for $\mathbb{H}_{2^n}$ and consequently $dim_\k(\mathbb{H}_{2^n}) = 2^{n}$.
To complete the Hopf algebra structure of $\mathbb{H}_{2^n}$, we set
$\Delta (g) = g \o g$, $\varepsilon (g) = 1$, $S(g) =g^{-1}=g,$ and $\Delta (x_i) = x_i \o 1 + g \o x_i$, $\varepsilon (x_i) = 0$ and $S(x_i) = -gx_i,$ for all $i \in \I_{n-1}$. 
Note that  $G(\mathbb{H}_{2^n})= \{1, g \} = C_2$, for any $n$.
In particular, when $n=2$, the Nichols Hopf algebra $\mathbb{H}_{2^2}$ is exactly the Sweedler's $4$-dimensional Hopf algebra $\mathbb{H}_{4}$.

Nichols Hopf algebra also is a self-dual Hopf algebra, and again this property allow us to determine all partial coactions from partial actions on its base field.

\subsection{Partial Actions}
In this subsection all partial actions of $\mathbb{H}_{2^n}$ on $\k$ are classified.

\medbreak

First, we present a family of genuine partial actions of $\mathbb{H}_{2^n}$ on $\k$.
Such a family is parameterized by $\k^{n-1}$, as follows.

\begin{prop}\label{lda_alpha_eh_acao_nichols}
	For any  $\alpha=(\alpha_i)_{i \in \I_{n-1}} \in \k^{n-1}$, the linear map $\lda_\alpha \in (\mathbb{H}_{2^n})^*$,
	defined as 	$\lda_\alpha = 1^* + \sum_{i=1}^{n-1}\alpha_i \, [ (x_i)^* + (gx_i)^*]$, is a partial action.
\end{prop}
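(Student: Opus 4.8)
The plan is to apply Lemma~\ref{eqn_lda}: it suffices to check that $\lda_\alpha(1_{\mathbb{H}_{2^n}}) = 1_\k$, which is immediate from the summand $1^*$, and that the partial-action identity \eqref{eq}, namely $\lda_\alpha(h)\lda_\alpha(y) = \lda_\alpha(h_1)\lda_\alpha(h_2 y)$, holds for all $h, y$. By bilinearity I would reduce to the case where $h$ and $y$ range over the canonical basis $\mathcal{B}$. The key structural feature I would exploit is that $\lda_\alpha$ is supported only on $\{1\}\cup\{x_i, gx_i : i \in \I_{n-1}\}$, and that on a monomial of $x$-degree one its value is insensitive to the power of $g$, since $\lda_\alpha(x_i) = \lda_\alpha(gx_i) = \alpha_i$. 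I would therefore organize the verification according to the $x$-degree $r$ of the basis element $h = g^a x_{i_1}\cdots x_{i_r}$ (with $i_1 < \cdots < i_r$), using that each term of $\Delta$ preserves total $x$-degree, because $\Delta(x_i) = x_i \otimes 1 + g\otimes x_i$ carries $x$-degree one while $\Delta(g^a) = g^a \otimes g^a$ carries $x$-degree zero.

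For the low-degree cases $r \in \{0,1\}$ I would compute $\Delta$ directly. When $h \in \{1,g\}$ the identity follows at once from $\Delta(g) = g \otimes g$ together with $\lda_\alpha(1)=1$ and $\lda_\alpha(g)=0$. When $h \in \{x_i, gx_i\}$, using $\Delta(gx_i) = gx_i\otimes g + 1 \otimes gx_i$, I would check \eqref{eq} against every basis element $y$; the helpful observation here is that the subalgebra generated by $g$ and a single $x_i$ is a copy of Sweedler's algebra $\mathbb{H}_4 = T_2(-1)$, on which $\lda_\alpha$ restricts to the partial action $\lda_{\alpha_i}$ of Example~\ref{lda_sweedler}, so the diagonal interactions (those with $y$ built from $g$ and the same $x_i$) are already known to satisfy \eqref{eq}, while any $y$ involving a different generator $x_j$ contributes zero by a degree count.

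For the high-degree cases $r \geq 2$ the left-hand side of \eqref{eq} vanishes because $\lda_\alpha(h)=0$, so I must show the right-hand side vanishes as well. Only the terms of $\Delta(h)$ whose first tensor factor has $x$-degree at most one survive the pairing with $\lda_\alpha$, and since total $x$-degree is conserved, the corresponding second factor $h_2$ then has $x$-degree at least $r-1$. For $r \geq 3$ this forces the $x$-degree of $h_2$ to be at least two, whence $\lda_\alpha(h_2 y)=0$ for every $y$, and the right-hand side is zero. The only genuinely delicate case is $r = 2$, that is $h = g^a x_i x_j$ with $i < j$: expanding $\Delta(x_i x_j) = \Delta(x_i)\Delta(x_j)$ and using $x_i g = -gx_i$ produces exactly two surviving degree-one terms with opposite signs, so that the right-hand side reduces to $-\alpha_i\,\lda_\alpha(g^a x_j y) + \alpha_j\,\lda_\alpha(g^a x_i y)$. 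Observing that $\lda_\alpha(g^a x_k y)$ is nonzero only when $y = g^b$ is group-like, in which case it equals $(-1)^b \alpha_k$ independently of $a$, the two terms become $-\alpha_i(-1)^b\alpha_j + \alpha_j(-1)^b\alpha_i = 0$.

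The main obstacle is precisely this $r = 2$ cross term: it is the one place where two distinct nonzero contributions appear on the right-hand side and must cancel. The cancellation is not accidental, but hinges on the sign asymmetry in $\Delta(x_i x_j)$ coming from the relation $x_i g = -gx_i$, played against the symmetry $\alpha_i\alpha_j = \alpha_j\alpha_i$ of the scalars; this is also the structural reason the defining combination pairs $(x_i)^*$ with $(gx_i)^*$, making $\lda_\alpha$ blind to the power of $g$ on degree-one monomials. Every remaining case collapses by the degree count, so once this case is settled the verification of \eqref{eq} is complete and $\lda_\alpha$ is a partial action.
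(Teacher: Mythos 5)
Your overall strategy coincides with the paper's: reduce to basis elements via Lemma \ref{eqn_lda}, organize the cases by the number of skew-primitive factors in $h$, kill everything involving $x$-degree at least two by the observation that $\lda_\alpha$ vanishes on any product containing two skew-primitive factors, and resolve the one delicate case $h = g^a x_i x_j$ by the explicit cancellation $-\alpha_i(-1)^b\alpha_j + \alpha_j(-1)^b\alpha_i = 0$. Your handling of $r=2$ is correct (and slightly more explicit than the paper's ``one can verify''), and for $r\geq 3$ your degree-conservation argument is a cleaner packaging of the paper's explicit eight-term expansion of $\Delta(g^{\ell}x_{i_1}x_{i_2}x_{i_3}w)$. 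The appeal to Example \ref{lda_sweedler} for the diagonal degree-one interactions is also legitimate, since the subalgebra generated by $g$ and a single $x_i$ is a Hopf subalgebra isomorphic to $\mathbb{H}_4$ on which $\lda_\alpha$ restricts to $\lda_{\alpha_i}$.

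There is, however, one step that is false as written: in the case $h \in \{x_i, gx_i\}$ you claim that ``any $y$ involving a different generator $x_j$ contributes zero by a degree count.'' Take $h = gx_i$ and $y = g^b x_j$ with $j \neq i$. Since $\Delta(gx_i) = gx_i \otimes g + 1 \otimes gx_i$, condition \eqref{eq} reads $\lda_\alpha(gx_i)\lda_\alpha(y) = \lda_\alpha(gx_i)\lda_\alpha(gy) + \lda_\alpha(gx_i y)$; here the left-hand side equals $\alpha_i\alpha_j$, which is not zero in general, and the right-hand side equals $\alpha_i\lda_\alpha(g^{b+1}x_j) + 0 = \alpha_i\alpha_j$ as well. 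So nothing ``contributes zero'': the identity holds because the genuine cross term $\lda_\alpha(gx_iy)$ vanishes by your degree count \emph{and} because $\lda_\alpha(g^{b}x_j) = \lda_\alpha(g^{b+1}x_j)$, i.e.\ the insensitivity to powers of $g$ on degree-one monomials that you isolated at the start. Both ingredients already appear in your write-up, so the repair is immediate; but as stated that sentence would not survive checking, and once it is corrected your proof is complete and essentially identical to the paper's.
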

\begin{proof}
	We need to check if condition \eqref{eq} holds.
	We proceed fixing $h \in \mathcal{B}$ and verifying \eqref{eq} for any $y \in \mathcal{B}$, where $\mathcal{B}$ is the canonical basis of $\mathbb{H}_{2^n}$.
	
	First, if $h \in \{1, g, x_1, \cdots, x_{n-1} \}$, then \eqref{eq} holds trivially for any $y \in \mathcal{B}$.
	
	If $h=gx_i$, $i \in I_{n-1}$, then \eqref{eq} means 
	$\lda_\alpha(gx_i)\lda_\alpha(y) =\lda_\alpha(gx_i) \lda_\alpha(g y) + \lda_\alpha(gx_i y),$
	for any $y \in \mathcal{B}$.
	It is a routine calculation to verify that previous equality holds for any $y \in \mathcal{B}$.
	One can do it considering $y$ as $g^\ell, g^\ell x_j$ and $g^\ell x_{j_1}x_{j_2}...x_{j_s}$, where $\ell \in \I_{0,1},$ $j \in \I_{n-1}$ and $s \in \I_{2,n-1}, j_t \in \I_{n-1}, t \in \I_s$.
	
	Now, if $h=g^\ell x_{i_1}x_{i_2}$, $\ell \in \I_{0, 1}$, $i_1, i_2 \in \I_{n-1}$, $i_1 < i_2$, then
	\begin{align*}
	\Delta(g^\ell x_{i_1}x_{i_2}) = & \ g^\ell x_{i_1}x_{i_2} \o g^\ell - g^{\ell+1}x_{i_1} \o g^\ell x_{i_2} + g^{\ell +1} x_{i_2} \o g^{\ell}x_{i_1} \\
	+ &\  g^\ell \o g^\ell x_{i_1}x_{i_2},
	\end{align*}
	and so \eqref{eq} becomes
	\begin{align*}
	\lda_\alpha(g^\ell x_{i_1}x_{i_2})\lda_\alpha(y) = & \ \lda_\alpha(g^\ell x_{i_1}x_{i_2})  \lda_\alpha(g^\ell y) - \lda_\alpha(g^{\ell+1}x_{i_1}) \lda_\alpha(g^\ell x_{i_2} y) \\
	+ & \ \lda_\alpha(g^{\ell+1} x_{i_2}) \lda_\alpha(g^{\ell}x_{i_1} y) +  \lda_\alpha(g^\ell) \lda_\alpha( g^\ell x_{i_1}x_{i_2} y),
	\end{align*}
	for any $y \in \mathcal{B}$.

	Since $\lda_\alpha(gx_s)=\lda_\alpha(x_s)=\alpha_s$ and $\lda_\alpha(g^\ell x_{i_1}x_{i_2})=\lda_\alpha(g^\ell x_{i_1}x_{i_2}y)=0$, for any $s \in \I_{n-1}$, $y \in \mathcal{B}$, \eqref{eq} is reduced to
	$$\alpha_{i_1} \lda_\alpha(g^\ell x_{i_2}y) = \alpha_{i_2} \lda_\alpha(g^\ell x_{i_1}y),$$
	for any $ y \in \mathcal{B}$.
	One can verify this equality considering $y$ as $g^t$ and $g^t x_{j_1}...x_{j_s}$, where $t \in \I_{0,1}$ and $s \in \I_{n-1}$, $j_k \in \I_{n-1}$, $k \in \I_{s}$, $j_1 < \cdots < j_s$.
	
	Finally, it remains to check \eqref{eq} for $h = g^\ell x_{i_1}x_{i_2}...x_{i_s}$, where $\ell \in \I_{0,1}$ and $s \geq 3$, and any $y \in \mathcal{B}$. 
	Write $h =g^\ell x_{i_1}x_{i_2}x_{i_3}w$, for some $w \in \mathbb{H}_{2^n}$.
	Then,
	\begin{align*}
	\Delta(g^\ell  x_{i_1}x_{i_2}x_{i_3}w) & =  \Delta(g^\ell  x_{i_1}x_{i_2})\Delta(x_{i_3}) \Delta(w) \\
	& =  g^\ell x_{i_1}x_{i_2}x_{i_3}w_1 \o g^\ell w_2 - g^{\ell +1}x_{i_1}x_{i_3}w_1 \o g^\ell x_{i_2}w_2 \\
	& + g^{\ell +1} x_{i_2}x_{i_3}w_1 \o g^{\ell }x_{i_1}w_2 + g^\ell  x_{i_3}w_1 \o g^\ell x_{i_1}x_{i_2}w_2 \\
	& + g^{\ell +1}x_{i_1}x_{i_2}w_1 \o g^\ell x_{i_3} w_2  + g^{\ell +2}x_{i_1}w_1 \o g^\ell x_{i_2}x_{i_3} w_2 \\
	& - g^{\ell +2} x_{i_2}w_1 \o g^{\ell }x_{i_1}x_{i_3} w_2 + g^{\ell +1} w_1 \o g^\ell x_{i_1}x_{i_2}x_{i_3} w_2,
	\end{align*}
	and for any $y \in \mathcal{B}$, condition \eqref{eq} means
	\begin{align*}
	\lda_\alpha(g^\ell  x_{i_1}x_{i_2}x_{i_3}w)\lda_\alpha(y) & = \lda_\alpha( g^\ell x_{i_1}x_{i_2}x_{i_3}w_1)  \lda_\alpha(g^\ell w_2 y) \\
	& -  \lda_\alpha( g^{\ell +1}x_{i_1}x_{i_3}w_1)  \lda_\alpha(g^\ell x_{i_2}w_2 y)\\
	& +  \lda_\alpha( g^{\ell +1} x_{i_2}x_{i_3}w_1)  \lda_\alpha( g^{\ell }x_{i_1}w_2 y) \\
	& + \lda_\alpha( g^\ell  x_{i_3}w_1)  \lda_\alpha( g^\ell x_{i_1}x_{i_2}w_2 y) \\
	& +  \lda_\alpha( g^{\ell +1}x_{i_1}x_{i_2}w_1 )  \lda_\alpha( g^\ell x_{i_3} w_2 y) \\
	& + \lda_\alpha( g^{\ell +2}x_{i_1}w_1) \lda_\alpha( g^\ell x_{i_2}x_{i_3} w_2 y) \\
	& - \lda_\alpha( g^{\ell +2} x_{i_2}w_1 ) \lda_\alpha( g^{\ell }x_{i_1}x_{i_3} w_2 y) \\
	& + \lda_\alpha( g^{\ell +1} w_1 ) \lda_\alpha( g^\ell x_{i_1}x_{i_2}x_{i_3} w_2 y).
	\end{align*}
	
	Note that the above equality holds since $\lda_\alpha$ evaluated in a product that contains at least two skew-primitive elements as factors is equal to zero.
	
	Therefore,  the map $\lda_\alpha$ as given is a partial action of $\mathbb{H}_{2^n}$ on $\k$.
\end{proof}

Now we classify all partial actions of $\mathbb{H}_{2^n}$ on $\k$.

\begin{teo}\label{nichols}
	Let $\lda:\mathbb{H}_{2^n} \longrightarrow \k$ be a linear map.
	Then, $\lda$ is a partial action of $\mathbb{H}_{2^n}$ on $\k$ if and only if $\lda = \varepsilon$ (global action) or $\lda=\lda_\alpha$, where	
	$\alpha=(\alpha_i)_{i\in \I_{n-1}} \in \k^{n-1}$.
\end{teo}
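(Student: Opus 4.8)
The plan is to get the forward implication for free and to reduce the converse to the group-like part of $\mathbb{H}_{2^n}$. Proposition \ref{lda_alpha_eh_acao_nichols} already establishes that each $\lda_\alpha$ is a partial action, and $\varepsilon$ is a global (hence partial) action, so only the converse needs an argument. Thus I would start from an arbitrary partial action $\lda$ and restrict it to the Hopf subalgebra $\k G(\mathbb{H}_{2^n}) = \k C_2$. By Example \ref{lda_kg} this restriction equals $\lda_N$ for a subgroup $N \leq C_2$, and since $C_2$ has only the two trivial subgroups, the whole problem splits into the two cases $\lda(g)=1$ and $\lda(g)=0$.

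In the case $\lda(g)=1$ I would show $\lda=\varepsilon$ using Proposition \ref{propriedades_lda} alone. Each $x_i$ is $(1,g)$-primitive, so item (ii) applied with $\lda(1)=\lda(g)=1$ forces $\lda(x_i)=0$, and then item (iii) (using $\lda(g)=1$) gives $\lda(x_i u)=0$ for every $u\in\mathbb{H}_{2^n}$. Any element of $\mathcal{B}$ carrying at least one $x$-factor can be rewritten, after sliding $g$ across the anticommuting generators, as $\pm x_{i_1}u$, so $\lda$ kills it, exactly as $\varepsilon$ does; and on $1$ and $g$ both maps return $1$. Hence $\lda=\varepsilon$.

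The substantive case is $\lda(g)=0$, where I set $\alpha_i:=\lda(x_i)$ and aim to show $\lda=\lda_\alpha$ with $\alpha=(\alpha_i)_{i\in\I_{n-1}}$. A short computation gives $\Delta(gx_i)=gx_i\o g+1\o gx_i$, so \eqref{eq} with $h=gx_i$ reads $\lda(gx_i)\lda(y)=\lda(gx_i)\lda(gy)+\lda(gx_i y)$ for all $y$. Evaluating at $y=g$ and using $gx_ig=-x_i$ yields $\lda(gx_i)=\alpha_i$. Substituting this back produces the recursion $\lda(gx_i y)=\alpha_i\bigl(\lda(y)-\lda(gy)\bigr)$, and replacing $y$ by $gy$ (again via $gx_ig=-x_i$) produces the companion recursion $\lda(x_i y)=\alpha_i\bigl(\lda(y)-\lda(gy)\bigr)$. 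The crucial point is that these two right-hand sides coincide, so $\lda(x_i y)=\lda(gx_i y)$ for all $y$.

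With both recursions in hand I would induct on the number $s$ of $x$-factors of a monomial $m=x_{i_1}\cdots x_{i_s}$, proving $\lda(m)=\lda(gm)$ for every $s\geq 1$ and that this common value vanishes as soon as $s\geq 2$. The base $s=1$ is precisely $\lda(x_i)=\lda(gx_i)=\alpha_i$. For the step, writing $m=x_{i_1}m'$ with $m'$ of $s-1$ factors and feeding the inductive equality $\lda(m')=\lda(gm')$ into both recursions gives $\lda(m)=\lda(gm)=\alpha_{i_1}\bigl(\lda(m')-\lda(gm')\bigr)=0$. Comparing $\lda$ with $\lda_\alpha$ on each element of $\mathcal{B}$ then finishes the proof. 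I expect the only real obstacle to be organizational: arranging the two intertwined recursions so that $\lda(m)$ and $\lda(gm)$ are controlled simultaneously, and correctly tracking the anticommutation signs entering through $gx_ig=-x_i$; once these are in place the induction is routine.
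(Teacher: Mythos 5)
Your proposal is correct and follows essentially the same route as the paper: restrict to $\k G(\mathbb{H}_{2^n})=\k C_2$ to split into the cases $\lda(g)=1$ and $\lda(g)=0$, dispatch the first via Proposition \ref{propriedades_lda}, and in the second use \eqref{eq} with $h=gx_i$ to get $\lda(gx_i)=\lda(x_i)$ and the recursion $\lda(g x_i y)=\lda(x_i y)=\alpha_i\bigl(\lda(y)-\lda(gy)\bigr)$, then induct on the number of $x$-factors. The paper's induction step (taking $h=gx_{i_1}$ with $k=x_{i_2}\cdots x_{i_s}$ and $k=gx_{i_2}\cdots x_{i_s}$) is exactly your pair of recursions, just not stated in general form beforehand.
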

\begin{proof}
	
Let $ \lda \in (\mathbb{H}_{2^n})^*$ be a partial action of $\mathbb{H}_{2^n}$ on $\k$.
We shall see that $\lda = \varepsilon$ or $\lda = \lda_\alpha$, for some $\alpha \in \k^{n-1}$, as defined in Proposition \ref{lda_alpha_eh_acao_nichols}.

Since $\lda |_{\k G(\mathbb{H}_{2^n})} = \lda_N$, where
$N =\{h \in G(\mathbb{H}_{2^n}) \, : \, \lda(h)=1_\k \}$ is a subgroup of $G(\mathbb{H}_{2^n})$, and 
 $G(\mathbb{H}_{2^n}) = \{1,g\}$ for any $n$, we have only two possibilities: $N = \{1,g\}$ or $N = \{1\}$.
	
	\smallbreak
	
	\underline{\textbf{$1^{st}$ case: $N=\{1, g\}$}}
		
	Consider $\lda(g)=\lda(1)=1$.
	First, since each $x_i$ is an $(1,g)$-primitive element, by Proposition \ref{propriedades_lda} (ii) it follows that $\lda(x_i)=0$ for all $i \in \I_{n-1}.$ 
	Then, by item (iii) of Proposition \ref{propriedades_lda}, $\lda(x_{j_1}x_{j_2}\cdots x_{j_\ell})=0$ for any $ \ell \in \I_{n-1}$ and $j_s \in \I_{n-1}$, $s \in \I_{\ell}$.
	
	Finally, by Proposition \ref{propriedades_lda} (i), we conclude $\lda(gx_{j_1}x_{j_2}\cdots x_{j_\ell})=0,$ for any $\ell, j_s \in \I_{n-1}$, $s \in \I_{\ell}$.
	
	Therefore, in this case, $\lda=\varepsilon$.
	
	\smallbreak
	
	\underline{\textbf{$2^{nd}$ case: $N=\{1\}$}}
	
	Here $\lda(g)=0$ by assumption.
	In order to determine the linear map $\lda$, we use condition \eqref{eq}.
	First, considering $h=gx_i$ and $y=g$ in \eqref{eq}, we obtain $\lda(gx_i)=\lda(x_i)$, for all $i \in \I_{n-1}$.
	Then, for $h=x_ix_j$ and $k=1$ we get $\lda(x_ix_j)=0$  and finally taking $h=x_ix_j$ and $k=g$ we deduce that $\lda(gx_ix_j)=0$, for all $i,j \in \I_{n-1}$, $i \neq j$.
	
	By induction, one can prove that $\lda(x_{i_1}x_{i_2}...x_{i_s})=\lda(gx_{i_1}x_{i_2}...x_{i_s})=0$,  for $s \geq 2$, $i_\ell \in \I_{n-1}$, $\ell \in \I_{s}$.
	Indeed, we already done for $s=2$.
	Assume $s \geq 2$ and consider $h=gx_{i_1}$, $k=x_{i_2}...x_{i_s}$ (resp. $h=gx_{i_1}$, $k=gx_{i_2}...x_{i_s}$) in \eqref{eq}.
	Then, using the induction hypothesis, it follows that $\lda(gx_{i_1}x_{i_2}...x_{i_s})=0$ (resp. $\lda(x_{i_1}x_{i_2}...x_{i_s})=0$).

	Thus, we have established the evaluation of the linear map $\lda$, namely $\lda(1)=1$, $\lda(g)=0$, $\lda(x_i)= \lda(g x_i)$ for each $i \in \I_{n-1}$, and $\lda(x_{i_1} x_{i_2} \cdots x_{i_s})=0$, $\lda(g x_{i_1} x_{i_2} \cdots x_{i_s})=0$, for all $s \in \I_{2,n-1}$, $i_j, i_\ell \in \I_{n-1}$, $j,\ell \in \I_{s}$.
	
	In this case, $\lda = \lda_\alpha$ as in Proposition \ref{lda_alpha_eh_acao_nichols}, where the parameter is $\alpha = (\lda(x_i))_{i \in \I_{n-1}} \in \k^{n-1}$.
\end{proof}

\begin{teo}\label{simetria_nichols}
	Every partial action of the Nichols Hopf algebra $\mathbb{H}_{2^n}$ on $\k$ is symmetric.
\end{teo}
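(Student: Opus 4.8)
The plan is to verify the additional symmetry condition \eqref{eqn_lda_sim}, that is $\lda(h)\lda(y)=\lda(h_1y)\lda(h_2)$, on all pairs of canonical basis elements $h,y\in\mathcal{B}$, separately for the two types of partial action produced by Theorem \ref{nichols}. For $\lda=\varepsilon$ there is nothing to prove, since a global action is always symmetric. Thus everything reduces to the family $\lda_\alpha$, and the guiding principle I would use is that $\lda_\alpha$ is concentrated in low degree: if $d(h)$ denotes the number of skew-primitive factors $x_i$ appearing in a basis monomial $h$, then $\lda_\alpha$ vanishes whenever $d(h)\ge2$, vanishes at $g$, and is otherwise pinned down by $\lda_\alpha(1)=1$ and $\lda_\alpha(x_i)=\lda_\alpha(gx_i)=\alpha_i$.

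The structural fact that makes the bookkeeping manageable is that the coproduct splits the $x$-degree additively: in $\Delta(h)=h_1\o h_2$ one has $d(h_1)+d(h_2)=d(h)$, because $\Delta(x_i)=x_i\o1+g\o x_i$ contributes $(1,0)$ or $(0,1)$ to the pair of degrees while $\Delta(g)=g\o g$ contributes $(0,0)$. Moreover $h_1y$ is either zero (when $h_1$ and $y$ share a common $x_i$, by $x_i^2=0$) or a monomial of degree $d(h_1)+d(y)$. I would then argue by cases on $d(h)$. When $d(h)\le1$, that is $h\in\{1,g,x_i,gx_i\}$, I expand both sides directly; the cases $h=1$, $h=g$, $h=gx_i$ are immediate, while $h=x_i$ reduces to the single identity $\lda_\alpha(x_iy)=\alpha_i\bigl(\lda_\alpha(y)-\lda_\alpha(gy)\bigr)$, which I would confirm by letting $y$ range over $\mathcal{B}$ and using $x_ig=-gx_i$ and $x_i^2=0$.

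The substantive range is $d(h)\ge2$, where the left-hand side $\lda_\alpha(h)\lda_\alpha(y)$ vanishes outright. On the right-hand side I would keep only the terms of $\Delta(h)$ with $\lda_\alpha(h_2)\ne0$; this forces $d(h_2)\le1$, hence $d(h_1)\ge d(h)-1$. If $d(h)\ge3$ then $d(h_1)\ge2$, so $d(h_1y)\ge2$ or $h_1y=0$, giving $\lda_\alpha(h_1y)=0$ for every surviving term, and the right-hand side collapses to $0$ as well. The only genuinely delicate case, which I expect to be the \textbf{main obstacle}, is $d(h)=2$: for $h=g^\ell x_{i_1}x_{i_2}$ the coproduct displayed in the proof of Proposition \ref{lda_alpha_eh_acao_nichols} has exactly two degree-$(1,1)$ summands, and after discarding the degree-$(2,0)$ and $(0,2)$ summands (which die because $\lda_\alpha$ kills degree-$2$ monomials) the right-hand side becomes $\alpha_{i_1}\lda_\alpha(g^{\ell+1}x_{i_2}y)-\alpha_{i_2}\lda_\alpha(g^{\ell+1}x_{i_1}y)$. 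This is precisely zero by the relation $\alpha_{i_1}\lda_\alpha(g^{m}x_{i_2}y)=\alpha_{i_2}\lda_\alpha(g^{m}x_{i_1}y)$ that was extracted (for all $y$) while checking \eqref{eq} in Proposition \ref{lda_alpha_eh_acao_nichols}, applied with $m=\ell+1$ and reduced modulo $g^2=1$. Assembling the cases then yields the symmetry of $\lda_\alpha$, and hence of every partial action of $\mathbb{H}_{2^n}$ on $\k$.
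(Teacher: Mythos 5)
Your proposal is correct and follows essentially the same route as the paper: both verify condition \eqref{eqn_lda_sim} on the canonical basis $\mathcal{B}$, split into cases according to the number of skew-primitive factors in $h$, and both reduce the degree-two case to the identity $\alpha_{i_1}\lda_\alpha(g^{m}x_{i_2}y)=\alpha_{i_2}\lda_\alpha(g^{m}x_{i_1}y)$ and kill all higher-degree cases by observing that $\lda_\alpha$ vanishes on any product containing two skew-primitive factors. Your only departures are cosmetic streamlinings — a uniform degree-counting argument in place of the paper's explicit eight-term coproduct expansion for $s\geq 3$, and citing the identity already established in Proposition \ref{lda_alpha_eh_acao_nichols} rather than re-verifying it case by case.
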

\begin{proof}
	Consider $\alpha \in \k^{n-1}$ and the partial action $\lda_{\alpha}$ of $\mathbb{H}_{2^n}$ on $\k$ defined in Proposition \ref{lda_alpha_eh_acao_nichols}.
	
	To verify that $\lda_\alpha$ is a symmetric partial action, we only need to check condition \eqref{eqn_lda_sim},
	that is,	$\lda_\alpha(h)\lda_\alpha(y)=\lda_\alpha(h_1y)\lda_\alpha(h_2),$ for all $h,y \in \mathbb{H}_{2^n}.$
	By simplicity, we denote $\lda_\alpha$ by $\lda$.
	
	We proceed as in the verification of \eqref{eq} in the proof of Proposition \ref{lda_alpha_eh_acao_nichols}, that is, we fix an element $h \in \mathcal{B}$ and verify condition \eqref{eqn_lda_sim} for all $y \in \mathcal{B}$, where $\mathcal{B}$ is the canonical basis of
	$\mathbb{H}_{2^n}$.
	
	First, for $h \in \{1, g, gx_1, \cdots, gx_{n-1} \}$,  \eqref{eqn_lda_sim} holds trivially for all $y \in \mathcal{B}$.
	
	If $h = x_i$, $i \in \I_{n-1}$, then \eqref{eqn_lda_sim} means $\alpha_i\lda(y) =\lda(x_i y) + \lda(gy)\alpha_i$.
	It is easy to check this equality considering $y$ in cases: $1, g, g^\ell x_j$ and $g^\ell x_{j_1}x_{j_2}...x_{j_s}$, where $j \in \I_{n-1}$, $\ell \in \I_{0,1}$ and $s \in \I_{2, n-1}$, $j_t \in \I_{n-1}$, $t \in \I_{s}$.
	
	Now, for $h=g^j x_{i_1}x_{i_2}$, where $j \in \I_{0, 1}$ and $i_1, i_2 \in \I_{n-1}$ with $i_1 < i_2$, \eqref{eqn_lda_sim} results in
	\begin{align*}
	\lda(g^j x_{i_1}x_{i_2})\lda(y) & = \lda(g^jx_{i_1}x_{i_2} y)  \lda(g^j) - \lda(g^{j+1}x_{i_1}y) \lda(g^jx_{i_2}) \\
	& + \lda(g^{j+1} x_{i_2} y) \lda(g^{j}x_{i_1}) +  \lda(g^j y) \lda( g^jx_{i_1}x_{i_2}).
	\end{align*}

	Since $\lda(gx_s)=\lda(x_s)=\alpha_s$, $\lda(g^j x_{i_1}x_{i_2})=0$ and $\lda(g^j x_{i_1}x_{i_2} y)=0$, for all $s \in \I_{n-1}$ and $y \in \mathcal{B}$, the above equality is reduced to 
	$$\lda(g^{j+1}x_{i_1} y) \alpha_{i_2} = \lda(g^{j+1}x_{i_2} y) \alpha_{i_1}.$$
	This latter equality is easily checked considering $y$ as $1, g$ and $g^\ell x_{j_1}x_{j_2}...x_{j_s}$, $\ell \in \I_{0,1}$, $s \in \I_{n-1}$, $j_t \in \I_{n-1}$, $t \in \I_{s}$, $j_1 < j_2 < \cdots < j_s$.
	
	Finally, consider $h = g^\ell x_{i_1} x_{i_2} \cdots x_{i_s}$, where $\ell \in \I_{0,1}$ and $s \geq 3$, and write $h = g^\ell x_{i_1} x_{i_2} x_{i_3} w$, for some $w \in \mathbb{H}_{2^n}$.
	In this case, \eqref{eqn_lda_sim} means
	\begin{align*}
	\lda(g^\ell  x_{i_1}x_{i_2}x_{i_3}w)\lda(y) & = \lda( g^\ell x_{i_1}x_{i_2}x_{i_3}w_1 y)  \lda(g^\ell w_2) \\
	& -  \lda( g^{\ell +1}x_{i_1}x_{i_3}w_1 y)  \lda(g^\ell x_{i_2}w_2) \\
	& + \lda( g^{\ell +1} x_{i_2}x_{i_3}w_1 y)  \lda( g^{\ell }x_{i_1}w_2) \\
	& + \lda( g^\ell  x_{i_3}w_1 y)  \lda( g^\ell x_{i_1}x_{i_2}w_2) \\
	& +  \lda( g^{\ell +1}x_{i_1}x_{i_2}w_1 y)  \lda( g^\ell x_{i_3} w_2) \\
	& + \lda( g^{\ell }x_{i_1}w_1 y) \lda( g^\ell x_{i_2}x_{i_3} w_2) \\
	& - \lda( g^{\ell } x_{i_2}w_1 y) \lda( g^{\ell }x_{i_1}x_{i_3} w_2) \\
	& + \lda( g^{\ell +1} w_1 y) \lda( g^\ell x_{i_1}x_{i_2}x_{i_3} w_2).
	\end{align*}

	Note that each term in the above equality has the map $\lda$ evaluated in a product involving two skew-primitive elements, which results in $0$, and therefore this equality clearly holds.

	Hence, condition \eqref{eqn_lda_sim} holds for all $h, y \in \mathcal{B}$, that is, $\lda_{\alpha}$ is a symmetric partial action of $\mathbb{H}_{2^n}$ on $\k$.
\end{proof}

To end this subsection, we present all partial actions of the Nichols Hopf algebra $\mathbb{H}_{2^n}$ on $\k$, when $n=2,3$ and $4$. 

\begin{exas}
	Nichols Hopf algebra of order $2$ is the Sweedler's Hopf algebra, and then all partial actions of $\mathbb{H}_4$ on $\k$ are already given in Example \ref{lda_sweedler}.
	
	When $n=3$, all partial actions of $\mathbb{H}_{2^3}$ on $\k$ are given by $\varepsilon$ and $\lda_\alpha$, for any $\alpha=(\alpha_1, \alpha_2) \in \k^2$.
	Namely, $\lda_\alpha(1)=1_\k$, $\lda_\alpha(g)=0$, $\lda_\alpha(x_1x_2)=0$, $\lda_\alpha(gx_1x_2)=0$ and $\lda_\alpha(x_i)=\lda_\alpha(gx_i)=\alpha_i$, for all $i \in \I_{2}$.
	
	For the Nichols Hopf algebra $\mathbb{H}_{2^4}$, we have the global action $\varepsilon$ and the genuine partial actions given by
	$\lda_\beta = 1^* + \sum_{i=1}^{3}\beta_i \, [ (x_i)^* + (gx_i)^*]$, for any $\beta = (\beta_1, \beta_2, \beta_3) \in \k^{3}$.
\end{exas}

\subsection{Partial Coactions}
In this subsection we compute all partial coactions of the Nichols Hopf algebra $\mathbb{H}_{2^n}$ on its base field $\k$. 
The setting here is the same as for the Taft algebra, since both are self-dual Hopf algebras.

\begin{lem}\label{iso_nichols}
	The linear map  $\psi : \mathbb{H}_{2^n}  \longrightarrow (\mathbb{H}_{2^n} )^*$
given by  $\psi(g)=1^* - g^*$ and $\psi(x_i)=x_i^* - (gx_i)^*$, for all $i \in \I_{n-1}$, is an isomorphism of Hopf algebras.
\end{lem}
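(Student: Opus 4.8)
The plan is to exploit the presentation of $\mathbb{H}_{2^n}$ by generators and relations. Since $\mathbb{H}_{2^n}$ is generated as an algebra by $g, x_1, \dots, x_{n-1}$, to produce an algebra homomorphism $\psi \colon \mathbb{H}_{2^n} \to (\mathbb{H}_{2^n})^*$ it suffices to prescribe the images of the generators and check that they satisfy the defining relations inside the convolution algebra $(\mathbb{H}_{2^n})^*$, whose unit is $\varepsilon = 1^* + g^*$ and whose product is $(f * h)(a) = f(a_1) h(a_2)$. Write $\gamma := \psi(g) = 1^* - g^*$ and $\xi_i := \psi(x_i) = x_i^* - (gx_i)^*$. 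Once $\psi$ is known to be an algebra map, I will verify the coalgebra axioms only on the generators: both $\Delta_{(\mathbb{H}_{2^n})^*} \circ \psi$ and $(\psi \o \psi)\circ \Delta$ are algebra homomorphisms $\mathbb{H}_{2^n} \to (\mathbb{H}_{2^n})^* \o (\mathbb{H}_{2^n})^*$, so that agreement on $g$ and the $x_i$ forces agreement everywhere; the same principle handles the counit $f \mapsto f(1)$. Finally, compatibility with the antipode is automatic, since every bialgebra map between Hopf algebras commutes with the antipodes.

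The first key computation identifies the nature of the images. Evaluating on the basis $\mathcal{B}$, one sees that $\gamma$ is supported on $\{1, g\}$ with $\gamma(1) = 1$ and $\gamma(g) = -1$, vanishing on every basis vector containing some $x_i$; thus $\gamma$ is exactly the algebra character sending $g \mapsto -1$ and each $x_i \mapsto 0$ (this respects all relations of $\mathbb{H}_{2^n}$), so $\gamma$ is a group-like element of $(\mathbb{H}_{2^n})^*$ of order two, giving $\gamma * \gamma = \varepsilon$. Likewise $\xi_i$ is supported only on $\{x_i, gx_i\}$ with $\xi_i(x_i) = 1$ and $\xi_i(gx_i) = -1$; a direct check that $\xi_i(ab) = \xi_i(a)\varepsilon(b) + \gamma(a)\xi_i(b)$ on basis elements shows that $\xi_i$ is $(\gamma,\varepsilon)$-skew-primitive, that is, $\Delta_{(\mathbb{H}_{2^n})^*}(\xi_i) = \xi_i \o \varepsilon + \gamma \o \xi_i$. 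These two facts ($\gamma$ group-like, $\xi_i$ skew-primitive) are precisely what the coalgebra step above requires.

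It remains to verify the algebra relations $\gamma^2 = \varepsilon$, $\xi_i^2 = 0$, $\xi_i\gamma = -\gamma\xi_i$ and $\xi_i\xi_j = -\xi_j\xi_i$ in $(\mathbb{H}_{2^n})^*$, and then bijectivity. The relation $\gamma^2 = \varepsilon$ is the order-two statement above. For the remaining relations the main tool is a support argument combined with sign bookkeeping: since $\xi_i(a_1)\xi_i(a_2) \neq 0$ would force both tensor legs of $\Delta(a)$ to contain the letter $x_i$, which is impossible because $x_i^2 = 0$, one gets $\xi_i * \xi_i = 0$; and $(\gamma*\xi_i)(a)$, $(\xi_i*\gamma)(a)$ (respectively $(\xi_i*\xi_j)(a)$, $(\xi_j*\xi_i)(a)$) can be nonzero only on the finitely many basis vectors $a \in \{x_i, gx_i\}$ (respectively on those of the form $g^\ell x_i x_j$), where one checks directly that the two sides differ by the sign $-1$ produced by $\gamma(g) = -1$ and $x_i g = -g x_i$. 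I expect this sign bookkeeping in the anticommutation relations to be the only delicate point; everything else is forced by the supports. For bijectivity, a dimension count ($\dim \mathbb{H}_{2^n} = \dim(\mathbb{H}_{2^n})^* = 2^n$) reduces the problem to linear independence of $\{\psi(b) : b \in \mathcal{B}\}$; writing $\psi(g^{j_0}x_1^{j_1}\cdots x_{n-1}^{j_{n-1}}) = \gamma^{j_0} * \xi_1^{j_1} * \cdots * \xi_{n-1}^{j_{n-1}}$ and using that each $\xi_i$ is supported on basis vectors containing exactly the letter $x_i$, these images are block-triangular in the $x$-degree relative to the dual basis, hence linearly independent. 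Therefore $\psi$ is a bijective bialgebra map, that is, an isomorphism of Hopf algebras.
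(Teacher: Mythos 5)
The paper itself states Lemma \ref{iso_nichols} with no proof at all (it is invoked as the known self-duality of $\mathbb{H}_{2^n}$), so there is no official argument to compare yours against; judged on its own merits, your strategy is sound and nearly all of it checks out. The reduction to generators and relations, the identification of $\gamma=1^*-g^*$ as the algebra character $g\mapsto -1$, $x_i\mapsto 0$ (hence a group-like of order two in the convolution algebra, giving $\gamma*\gamma=\varepsilon$), the verification that $\xi_i=x_i^*-(gx_i)^*$ satisfies $\xi_i(ab)=\xi_i(a)\varepsilon(b)+\gamma(a)\xi_i(b)$, the support-plus-sign arguments for $\xi_i*\xi_i=0$ and the anticommutation relations, the algebra-map trick reducing comultiplicativity and counitality to the generators, and the automatic compatibility of bialgebra maps with antipodes are all correct; I verified the sign computations and they come out as you predict.

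The one genuine gap is the final bijectivity step. Block structure by itself does not yield linear independence: for a fixed letter set $S=\{s_1<\cdots<s_k\}$, the two functionals $\xi_S:=\xi_{s_1}*\cdots*\xi_{s_k}$ and $\gamma*\xi_S$ have \emph{exactly the same} support, namely the two basis vectors $x_S:=x_{s_1}\cdots x_{s_k}$ and $gx_S$. So the matrix of the family $\{\psi(b):b\in\mathcal{B}\}$ in the dual basis is block-diagonal with $2\times 2$ blocks indexed by $S$, not triangular with scalar diagonal, and you must still check that each block is nonsingular. This is a short computation with the iterated coproduct: only one term survives (the one sending the letter $x_{s_j}$ to the $j$-th tensor leg), and it gives
\begin{equation*}
\xi_S(x_S)=1,\qquad \xi_S(gx_S)=(-1)^k,\qquad (\gamma*\xi_S)(x_S)=(-1)^k,\qquad (\gamma*\xi_S)(gx_S)=-1,
\end{equation*}
so each block has determinant $-2$. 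This is precisely where the standing hypothesis $char(\k)\neq 2$ of Section \ref{sec:Nichols} must enter---and it has to enter somewhere, because in characteristic $2$ one has $\psi(g)=1^*+g^*=\varepsilon=\psi(1)$, so $\psi$ is not injective. As written, your sentence ``block-triangular, hence linearly independent'' would apply verbatim in characteristic $2$ and prove a false statement; with the $2\times 2$ block computation inserted, your proof is complete.
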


From the isomorphism $\psi$ in the above lemma, it follows that $$\psi\left(\frac{1+g}{2}\right) = 1^*, \psi\left(\frac{x_i-gx_i}{2}\right) = x_i^* \textrm{ and } \psi\left(\frac{-(x_i+gx_i)}{2}\right) = (gx_i)^*,$$
for all $i \in \I_{n-1}$.
Thus, we are able to prove the following result.
\begin{teo}\label{nichols_coac}
	An element $z \in \mathbb{H}_{2^n}$ is a partial coaction of $ \mathbb{H}_{2^n}$ on $\k$ if and only if $z=1$ (global coaction) or $z=z_\alpha$, where $z_\alpha =\frac{1+g}{2} - \sum_{i=1}^{n-1}\alpha_i \, gx_i$, for any $\alpha= (\alpha_1, \alpha_2, \cdots, \alpha_{n-1}) \in \k^{n-1}$. 
	Moreover, all these partial coactions are symmetric.
\end{teo}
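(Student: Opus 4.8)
The plan is to exploit the self-duality of $\mathbb{H}_{2^n}$ exactly as was done for the Taft algebra, transporting the already-established classification of partial actions (Theorem \ref{nichols}) through the Hopf algebra isomorphism $\psi$ of Lemma \ref{iso_nichols} together with Proposition \ref{dual_corpo}. The conceptual core is the observation that for any Hopf algebra isomorphism $f\colon K \to L$, an element $w \in K$ satisfies the partial-coaction conditions $\varepsilon_K(w)=1_\k$ and \eqref{eqn_z} if and only if $f(w) \in L$ does, since $f$ preserves counit, comultiplication and multiplication; the same is true for the symmetric condition \eqref{eqn_z_sim}. Applying this with $f = \psi \colon \mathbb{H}_{2^n} \to (\mathbb{H}_{2^n})^*$, an element $z \in \mathbb{H}_{2^n}$ is a partial coaction of $\mathbb{H}_{2^n}$ on $\k$ if and only if $\psi(z)$ is a partial coaction of $(\mathbb{H}_{2^n})^*$ on $\k$, which by Proposition \ref{dual_corpo} amounts to $\psi(z)$ being a partial action of $\mathbb{H}_{2^n}$ on $\k$.

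With this reduction in hand, Theorem \ref{nichols} shows that the partial actions are exactly $\varepsilon$ and the maps $\lda_\alpha$, so the partial coactions are precisely $\psi^{-1}(\varepsilon)$ and $\psi^{-1}(\lda_\alpha)$ for $\alpha \in \k^{n-1}$. For the global case I would note that $\varepsilon$ is the unit of the convolution algebra $(\mathbb{H}_{2^n})^*$, so that $\psi^{-1}(\varepsilon)=1$ because the Hopf isomorphism $\psi$ is unital, recovering the global coaction. For the remaining case I would feed $\lda_\alpha = 1^* + \sum_{i=1}^{n-1}\alpha_i[(x_i)^* + (gx_i)^*]$ through the explicit inverse-image formulas recorded after Lemma \ref{iso_nichols}, namely $\psi^{-1}(1^*) = \tfrac{1+g}{2}$, $\psi^{-1}(x_i^*) = \tfrac{x_i - gx_i}{2}$ and $\psi^{-1}((gx_i)^*) = -\tfrac{x_i+gx_i}{2}$. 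Linearity then yields
\[
\psi^{-1}(\lda_\alpha) = \frac{1+g}{2} + \sum_{i=1}^{n-1}\alpha_i\left(\frac{x_i - gx_i}{2} - \frac{x_i + gx_i}{2}\right) = \frac{1+g}{2} - \sum_{i=1}^{n-1}\alpha_i\, gx_i = z_\alpha,
\]
matching the claimed formula.

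The symmetry assertion then requires no further computation: Theorem \ref{simetria_nichols} guarantees that every partial action of $\mathbb{H}_{2^n}$ on $\k$ is symmetric, so by Proposition \ref{dual_corpo} every partial coaction of $(\mathbb{H}_{2^n})^*$ on $\k$ is symmetric, and transporting back through $\psi^{-1}$ preserves condition \eqref{eqn_z_sim}; hence $z=1$ and each $z_\alpha$ are symmetric partial coactions of $\mathbb{H}_{2^n}$ on $\k$.

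I do not anticipate a genuine obstacle, since the heavy lifting has already been done in Theorems \ref{nichols} and \ref{simetria_nichols}. The only point demanding care is the sign bookkeeping in the computation of $\psi^{-1}(\lda_\alpha)$, where the two contributions $\psi^{-1}(x_i^*)$ and $\psi^{-1}((gx_i)^*)$ must combine so that the $x_i$ terms cancel and leave exactly $-gx_i$; a secondary point worth stating explicitly is that a Hopf algebra isomorphism transports (symmetric) partial coactions, which is precisely what legitimizes pulling the classification back along $\psi$.
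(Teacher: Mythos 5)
Your proposal is correct and follows essentially the same route as the paper's own proof: both reduce the classification via Proposition \ref{dual_corpo} and the self-duality isomorphism $\psi$ of Lemma \ref{iso_nichols}, pull back the partial actions $\varepsilon$ and $\lda_\alpha$ from Theorem \ref{nichols}, compute $\psi^{-1}(\lda_\alpha)=z_\alpha$ with exactly the same inverse-image formulas, and obtain symmetry from Theorem \ref{simetria_nichols} together with Proposition \ref{dual_corpo}. The only cosmetic difference is that you state explicitly the (correct) general principle that a Hopf algebra isomorphism transports (symmetric) partial coactions, which the paper uses implicitly.
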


\begin{proof}
	Recall that all partial actions of $\mathbb{H}_{2^n}$ on $\k$ were calculated in Theorem \ref{nichols}: $\varepsilon$ and $\lda_\alpha = 1^* + \sum_{i=1}^{n-1}\alpha_i \, [ (x_i)^* + (gx_i)^*]$, for any $\alpha = (\alpha_i)_{i=1}^{n-1} \in \k^{n-1}$.
	By Proposition \ref{dual_corpo}, all partial coactions of $(\mathbb{H}_{2^n})^*$ on $\k$ are $\rho_\varepsilon$ and $\rho_{\lda_\alpha}$.
	Since $\mathbb{H}_{2^n}$ is a self-dual Hopf algebra, that is, $\psi : \mathbb{H}_{2^n} \longrightarrow (\mathbb{H}_{2^n})^*$ given in Lemma \ref{iso_nichols} is an isomorphism of Hopf algebras, we obtain all partial coactions of $\mathbb{H}_{2^n}$ on $\k$: $\rho_{\psi^{-1}(\varepsilon)}$ and $\rho_{\psi^{-1}(\lda_\alpha)}$.
	Moreover, Theorem \ref{simetria_nichols} and Proposition \ref{dual_corpo} ensure that $\rho_{\psi^{-1}(\varepsilon)}$ and $\rho_{\psi^{-1}(\lda_\alpha)}$ are both symmetric partial coactions.
	
	However, we are interested in an explicit presentation of these partial coactions, \emph{i.e.}, provide the idempotent elements $\psi^{-1}(\varepsilon), \psi^{-1}(\lda_\alpha) \in \mathbb{H}_{2^n}$.
	We deal only with $\psi^{-1}(\lda_\alpha)$, since clearly $\psi^{-1}(\varepsilon) = 1$. 
	
	Since $\lda_\alpha = 1^* + \sum_{i=1}^{n-1}\alpha_i \, [ (x_i)^* + (gx_i)^*]$ and we know $\psi^{-1}(1^*), \psi^{-1}((x_i)^*)$ and $\psi^{-1}((gx_i)^*)$, we are able to compute $\psi^{-1}(\lda_\alpha)$:
	\begin{align*}
	\psi^{-1}(\lda_\alpha) & = \psi^{-1} \left( 1^* + \sum_{i=1}^{n-1}\alpha_i \, \left( (x_i)^* + (gx_i)^* \right) \right) \\
	& = \psi^{-1}(1^*) +  \sum_{i=1}^{n-1}\alpha_i \, \left( \psi^{-1}((x_i)^*) + \psi^{-1}((gx_i)^*)\right) \\
	& = \frac{1+g}{2} + \sum_{i=1}^{n-1}\alpha_i \left( \frac{x_i-gx_i}{2} - \frac{(x_i+gx_i)}{2} \right) \\
	& = \frac{1+g}{2} - \sum_{i=1}^{n-1}\alpha_i \, gx_i.
	\end{align*}
Denoting $\psi^{-1}(\lda_\alpha)$ by $z_\alpha$, we finish the proof.
\end{proof}

To conclude, we show all partial coactions of the Nichols Hopf algebra $\mathbb{H}_{2^n}$ on $\k$, when $n=2,3$ and $4$. 

\begin{exas}
	When $n=2$, all partial coactions of the Nichols Hopf algebra $\mathbb{H}_{2^2}$ on $\k$ are $z=1$ (global) and $z_\alpha = \frac{1+g}{2}-\alpha gx$, $\alpha \in \k$.
	Recall that the Nichols Hopf algebra of order $2$ is the Sweedler's $4$-dimensional Hopf algebra, and then such partial coactions were given in Example \ref{z_sweedler}.
	
	When $n=3$, all partial coactions of $\mathbb{H}_{2^3}$ on $\k$ are $z=1$ (global) and $z_\beta = \frac{1+g}{2}-\beta_1 gx_1-\beta_2 gx_2$, for any $\beta=(\beta_1, \beta_2) \in \k^{2}$.
	
	When $n=4$, all partial coactions of $\mathbb{H}_{2^4}$ on $\k$ are $z=1$ (global) and $z_\gamma = \frac{1+g}{2}-\gamma_1 gx_1-\gamma_2 gx_2-\gamma_3 gx_3$, for any $\gamma=(\gamma_1, \gamma_2, \gamma_3) \in \k^{3}$.
\end{exas}	

\subsection*{Acknowledgments} 
We thank Antonio Paques and João M. J. Giraldi for interesting conversations and advices at different
moments of our research.

\bibliographystyle{abbrv}

\end{document}